\numberwithin{equation}{section}
\newtheorem{theorem}{Theorem}[section]
\newtheorem{prop}[theorem]{Proposition}
\newtheorem{cor}[theorem]{Corollary}
\theoremstyle{remark}
\newtheorem{remark}[theorem]{Remark}
\newtheorem{rmk}[theorem]{Remarks}
\def\M{\mathsf{M}}
\def\p{\mathsf{p}}
\def\ev{\mathsf{C}}
\def\cH{\mathcal{H}}
\def\cM{\mathcal{M}}
\def\J{\mathbb{J}}
\def\Ce{\mathcal{C}_{\frak{e}}}
\def\Ca{\mathcal{C}_{\frak{a}}}
\def\R{\mathbb{R}}
\def\C{\mathbb{C}}
\def\H{\mathbb{H}^{m}}
\def\N{\mathbb{N}}
\def\Nz{\mathbb{N}_0}
\def\A{\mathfrak{A}}
\def\K{\frak{K}}
\def\L{\mathcal{L}}
\def\Lis{\mathcal{L}{\rm{is}}}
\def\F{\mathfrak{F}}
\def\bF{\boldsymbol{\mathfrak{F}}}
\def\bE{\boldsymbol{E}}
\def\bu{\boldsymbol{u}}
\def\bK{\mathbb{K}}
\def\B{\mathbb{B}}
\def\Ok{\mathsf{O}_{\kappa}}
\def\vpk{\varphi_\kappa}
\def\psk{\psi_\kappa}
\def\Q{\mathsf{Q}^{m}}
\def\Qk{\mathsf{Q}^{m}_{\kappa}}
\def\cA{\mathcal{A}}
\def\cE{\mathcal{E}}
\def\bA{\bar{\mathcal{A}}}
\def\Ak{\bar{\mathcal{A}}_\kappa}
\def\bfa{\boldsymbol{\mathfrak{a}}}
\def\cB{\mathcal{B}}
\def\Bk{\mathcal{B}_\kappa}
\def\cC{\mathcal{C}}
\def\Ck{\mathcal{C}_\kappa}
\def\kb{\varphi^{\ast}_{\kappa}}
\def\kf{\psi^{\ast}_{\kappa}}
\def\pk{\pi_\kappa}
\def\Re{\mathcal{R}}
\def\Rek{\mathcal{R}_{\kappa}}
\def\Rc{\mathcal{R}^c}
\def\Rck{\mathcal{R}^c_{\kappa}}
\def\id{{\rm{id}}}
\def\supp{{\rm{supp}}}
\def\uf{{\rm{unif}}}
\def\gd{{\rm{grad}}}
\def\div{{\rm div}}
\def\Hom{{\text{Hom}}}
\def\X{\mathbb{X}}
\def\Xk{\mathbb{X}_{\kappa}}
\def\ez{\mathbb{E}_{0}}
\def\ef{\mathbb{E}_{1}}
\begin{document}

\title[Continuous maximal regularity on singular manifolds]{Continuous maximal regularity on singular manifolds and its applications}

\author[Y. Shao]{Yuanzhen Shao}
\address{Department of Mathematics,
         Vanderbilt University, 
         1326 Stevenson Center, 
         Nashville, TN 37240, USA}
\email{yuanzhen.shao@vanderbilt.edu}

\subjclass[2010]{53C44, 58J99, 35K55, 35K65, 35K67, 35R01}
\keywords{Riemannian manifolds with singularities, continuous maximal regularity, degenerate parabolic equations, geometric evolution equations, the Yamabe flow, the porous medium equation, the parabolic $p$-Laplacian equation, the thin film equation, boundary blow-up problem, waiting time phenomenon}

\begin{abstract}
In this article, we set up the continuous maximal regularity theory for a class of linear differential operators on manifolds with singularities. These operators exhibit degenerate or singular behaviors while approaching the singular ends. Particular examples of such operators include differential operators defined on domains, which degenerate fast enough toward the boundary. Applications of the theory established herein are shown to the Yamabe flow, the porous medium equation, the parabolic $p$-Laplacian equation and the thin film equation. Some comments about the boundary blow-up problem, and waiting time phenomenon for singular or degenerate parabolic equations can also be found in this paper.
\end{abstract}
\maketitle

\section{\bf Introduction}

The main objective of this article is to establish the continuous maximal regularity for a family of degenerate or singular elliptic operators on a class of manifolds with singularities, called singular manifolds. These results generalize the work in the previous paper \cite{ShaoSim13}. The notation of singular manifolds used in this paper was first introduced by H.~Amann in \cite{Ama13}. Roughly speaking, a manifold $(\M,g)$ is singular iff it is conformal to one whose local patches are of comparable sizes, and all transition maps and curvatures have uniformly bounded derivatives, i.e., $(\M,g/\rho^2)$ has the aforementioned properties for some $\rho\in C^\infty(\M,(0,\infty))$. In \cite{DisShaoSim}, it is shown that the class of all such $(\M,g/\rho^2)$, called uniformly regular Riemannian manifolds, coincides with the family of complete manifolds with bounded geometry if we restrict ourselves to manifolds without boundary.

In \cite{Ama13b}, the author built up the $L_p$-maximal regularity for a family of second-order elliptic operators satisfying a certain ellipticity condition, called {\em uniformly strongly $\rho$-elliptic}. By this, the author means that the principal part $-\div(\ev(\vec{a},\gd u))$ of a differential operator fulfils
$$ (\ev(\vec{a},\xi) | \xi)_{g^*} \sim \rho^2 |\xi|_{g^*}^2, $$
for any cotangent field $\xi$. 
Here $\vec{a}$ is a symmetric $(1,1)$-tensor field on $(\M,g)$, and $\ev(\cdot,\cdot)$ denotes complete contraction. See Section~3 for the precise definition.
If two real-valued functions $f$ and $g$ are equivalent in the sense that $f/c\leq g\leq cf$ for some $c\geq 1$, then we write $f\sim g$.
These operators, as we can immediately observe from the above relationship, can exhibit degenerate or singular behaviors while approaching the singular ends. In \cite{Ama13b}, H.~Amann also looked at manifolds with boundary. 
We generalize this concept of {\em uniformly strong $\rho$-ellipticity} to elliptic operators of arbitrary even order acting on tensor bundles. 
A linear operator 
$$\cA:=\sum\limits_{r=0}^{2l} \ev(a_r,\nabla^r \cdot)$$ 
of order $2l$, where $a_r$ is a $(\sigma+\tau+r, \tau+\sigma)$-tensor field, is said to be {\em uniformly strongly $\rho$-elliptic}  if its principal part $\ev(a_{2l}, \nabla^{2l}\cdot)$ satisfies that for each cotangent field $\xi$ and every $(\sigma,\tau)$-tensor field $\eta$, it holds
\begin{align}
\label{S1: ellipticity}
(\ev(a_{2l},\eta\otimes(-i\xi)^{\otimes {2l}})|\eta)_g \sim \rho^{2l} |\eta|_g^2 |\xi|_{g^*}^{2l}. 
\end{align}
Moreover, in Section~3.1, we show that this ellipticity condition can actually be replaced by a weaker one, called {\em normal $\rho$-ellipticity}. But for the sake of simplicity, we still stay with {\em uniformly strong $\rho$-ellipticity} stated above in the introduction. 

By imposing some mild regularity condition on the coefficients $a_r$ of $\cA$, called {\em $s$-regularity}, we are able to prove the following result.
\begin{theorem}
\label{S1: main theorem}
Let $s\in \R_+\setminus\N$ and $\vartheta\in \R$. Suppose that a $2l$-th order linear differential operator $\cA$ is {\em uniformly strongly $\rho$-elliptic} and {\em $s$-regular}.  Then
$$
\cA\in\cH(bc^{s+2l,\vartheta}(\M, V^\sigma_\tau),bc^{s,\vartheta}(\M, V^\sigma_\tau)).
$$
\end{theorem}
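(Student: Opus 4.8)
The plan is to run the classical localization--patching scheme for elliptic operators on uniformly regular (here, singular) manifolds, together with the Da Prato--Grisvard theory of analytic semigroups on little H\"older spaces. Since $s\notin\N$, the weighted spaces $bc^{s+2l,\vartheta}(\M,V^\sigma_\tau)$ and $bc^{s,\vartheta}(\M,V^\sigma_\tau)$ are genuine little H\"older spaces, so the embedding $bc^{s+2l,\vartheta}\overset{d}{\hookrightarrow}bc^{s,\vartheta}$ is dense, and $\cA\in\mathcal{L}(bc^{s+2l,\vartheta},bc^{s,\vartheta})$ follows from $s$-regularity (Section~3). It therefore remains to find $\omega\in\R$ and a sector $\Sigma:=\{z\in\C:|\arg z|\le\theta\}$, $\theta\in(\pi/2,\pi)$, such that $\lambda+\cA\in\Lis(bc^{s+2l,\vartheta},bc^{s,\vartheta})$ together with
\[
|\lambda|\,\bigl\|(\lambda+\cA)^{-1}\bigr\|_{\mathcal{L}(bc^{s,\vartheta})}+\bigl\|(\lambda+\cA)^{-1}\bigr\|_{\mathcal{L}(bc^{s,\vartheta},\,bc^{s+2l,\vartheta})}\le M ,\qquad \lambda\in\omega+\Sigma .
\]
As a first reduction I would conjugate $\cA$ by multiplication with $\rho^{\vartheta}$: on a singular manifold $\rho$ has uniformly bounded $\rho$-weighted derivatives, so the resulting commutator terms are of lower order with controlled coefficients, the conjugated operator is again uniformly strongly $\rho$-elliptic and $s$-regular, and the conjugation identifies $bc^{s,\vartheta}$ with $bc^{s,0}$ and $bc^{s+2l,\vartheta}$ with $bc^{s+2l,0}$; hence one may assume $\vartheta=0$.

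\emph{Localization.} Next I would fix a uniformly regular atlas $\{(\Ok,\vpk)\}_{\kappa\in\K}$ for $(\M,g/\rho^2)$, modeled on the cube $\Q$ on interior charts and the half-cube $\Qk$ on boundary charts, with a subordinate partition of unity $\{\pk^2\}_{\kappa\in\K}$ and a localization system, as in the construction of $bc^{s,0}$; recall that the operators $(\pk,\kb)$ realize $bc^{s,0}(\M,V^\sigma_\tau)$ as a retract of $\ell_\infty\bigl(bc^{s}(\Q,\R^N)\bigr)$ (with half-cube model spaces on boundary charts), where $N$ is the fiber dimension of $V^\sigma_\tau$. Transporting $\cA$ through the $\kappa$-th chart, in a local frame for $V^\sigma_\tau$, produces a differential operator $\cA_\kappa$ on $bc^{s}(\Q,\R^N)$, respectively on $bc^s(\Qk,\R^N)$. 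The point I would stress is that the conformal rescaling by $\rho$ is already built into the charts (adapted to $g/\rho^2$) and into the $\rho$-weighted derivatives defining the $bc$-norms, so that the factor $\rho^{2l}$ in \eqref{S1: ellipticity} is normalized away; together with $s$-regularity this yields that $\{\cA_\kappa\}_{\kappa\in\K}$ is a family of normally elliptic operators of order $2l$ whose ellipticity constants and $bc^{s}$-coefficient norms are bounded uniformly in $\kappa$, accompanied on boundary charts by a normally elliptic boundary value system with the same uniformity (this is where the reduction to normal $\rho$-ellipticity of Section~3.1 enters). If convenient, one can freeze the top-order coefficients of each $\cA_\kappa$ at a chart reference point and absorb the remainder into the perturbation below.

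\emph{Model problems and patching.} Then I would invoke the classical fact that a normally elliptic operator of order $2l$ with $bc^{s}$-coefficients on $\R^m$---and, with a normally elliptic boundary value system, on the half-space---belongs to $\cH(bc^{s+2l},bc^{s})$, with a resolvent constant depending only on the ellipticity constants and the $bc^{s}$-norms of the coefficients (cf.\ \cite{ShaoSim13}). Since these data are $\kappa$-uniform by the previous step, there are $\omega_0$ and a sector $\Sigma$ as above with, for $\lambda\in\omega_0+\Sigma$,
\[
|\lambda|\,\bigl\|(\lambda+\cA_\kappa)^{-1}\bigr\|_{\mathcal{L}(bc^{s})}+\bigl\|(\lambda+\cA_\kappa)^{-1}\bigr\|_{\mathcal{L}(bc^{s},\,bc^{s+2l})}\le M_0 \quad\text{uniformly in }\kappa .
\]
I would then form $R_\lambda:=\sum_{\kappa\in\K}\pk\,\kb\,(\lambda+\cA_\kappa)^{-1}\,\kf\,\pk$ (with $\kf$ the inverse chart transport); uniform local finiteness of the atlas and the bound above give $R_\lambda\in\mathcal{L}(bc^{s,0})\cap\mathcal{L}(bc^{s,0},bc^{s+2l,0})$ with $|\lambda|\,\|R_\lambda\|_{\mathcal{L}(bc^{s,0})}+\|R_\lambda\|_{\mathcal{L}(bc^{s,0},bc^{s+2l,0})}\le M_1$. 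Using $\kb\,\kf=\id$ on each $\Ok$ and the intertwining $\cA\,\kb=\kb\,\cA_\kappa$, one computes $(\lambda+\cA)R_\lambda=\id+S_\lambda$ with $S_\lambda=\sum_{\kappa\in\K}[\cA,\pk]\,\kb\,(\lambda+\cA_\kappa)^{-1}\,\kf\,\pk$, a sum of commutators $[\cA,\pk]$ of order $\le 2l-1$, plus the analogous lower-order terms from the local trivialization and any coefficient freezing. Interpolating the two estimates above---in particular $\|(\lambda+\cA_\kappa)^{-1}\|_{\mathcal{L}(bc^s,\,bc^{s+2l-1})}\le M|\lambda|^{-1/(2l)}$---gives $\|S_\lambda\|_{\mathcal{L}(bc^{s,0})}\le M_2|\lambda|^{-1/(2l)}$, so $\|S_\lambda\|<1/2$ for $\lambda\in\omega+\Sigma$ once $\omega\ge\omega_0$ is large. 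A symmetric left parametrix $\tilde R_\lambda$ yields $\tilde R_\lambda(\lambda+\cA)=\id+\tilde S_\lambda$ with $\|\tilde S_\lambda\|<1/2$, so $\lambda+\cA$ is a bijection from $bc^{s+2l,0}$ onto $bc^{s,0}$, $(\lambda+\cA)^{-1}=R_\lambda(\id+S_\lambda)^{-1}$, and the bound $M_1$ passes through the Neumann series. Undoing the $\rho^{\vartheta}$-conjugation gives the desired resolvent estimate, i.e.\ $\cA\in\cH(bc^{s+2l,\vartheta},bc^{s,\vartheta})$.

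\emph{Main obstacle.} The hard part will be the \emph{uniformity} throughout: one must guarantee that the model-problem solution operators and the patching constants do not degenerate as the singular ends of $\M$ are approached---equivalently, that the errors $S_\lambda$ are small relative to $\lambda$ uniformly in the position on $\M$. This is precisely what the singular-manifold structure is designed to supply (uniform local finiteness of the atlas, uniform control of the transition maps of $(\M,g/\rho^2)$, and uniformly bounded $\rho$-weighted derivatives of $\rho$, which also force the $\rho^{\vartheta}$-conjugation and the $\rho$-normalization of $\cA$ to produce only lower-order, $\kappa$-uniformly bounded perturbations), but turning it into the required uniform estimates takes careful bookkeeping. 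A further technical point is the boundary case: the reduction from normal $\rho$-ellipticity to a model boundary value problem on the half-space, and the perturbation of the boundary operator under localization, must be carried through with the same uniformity.
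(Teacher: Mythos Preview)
Your approach is essentially the same localization--patching scheme the paper uses: reduce to uniform normal ellipticity of the local representatives $\cA_\kappa$, invoke the Euclidean resolvent theory (in the paper via \cite[Theorems~4.1, 4.2]{Ama01}) with $\kappa$-uniform constants, build a parametrix by gluing the local resolvents through the retraction--coretraction system $(\Re,\Rc)$, and kill the commutator remainder by the interpolation inequality $\|(\lambda+\cA_\kappa)^{-1}\|_{\L(bc^s,bc^{s+2l-1})}\le C|\lambda|^{-1/2l}$.

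Two points worth noting. First, your reduction to $\vartheta=0$ by conjugation with $\rho^\vartheta$ is a valid alternative, but the paper avoids it: instead of computing $[\cA,\rho^\vartheta]$ it works directly on the weighted sequence space $l_\infty^\vartheta(\bF^s)$, where the weight $\rho_\kappa^\vartheta$ is a \emph{constant} on each chart and hence commutes trivially with $(\lambda+\Ak)^{-1}$ (see \eqref{S3: res-est-bAw-1}). This buys a slightly cleaner argument with no extra commutator bookkeeping. Second, your discussion of boundary charts and ``normally elliptic boundary value systems'' is unnecessary here: the paper explicitly restricts Section~3.1 to singular manifolds \emph{without} boundary (stated at the opening of that subsection), so all model problems live on $\R^m$ and no Lopatinskii--Shapiro verification is needed. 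Dropping that part simplifies your argument and matches the actual scope of the theorem.
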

Here $u\in bc^{s,\vartheta}(\M, V^\sigma_\tau)$ iff $\rho^\vartheta u$ is a $(\sigma,\tau)$-tensor field with {\em little H\"older} continuity of order $s$. The precise definition of weighted {\em little H\"older} spaces will be presented in Section~2.2. 
An operator ${A}$ is said to belong to the class $\cH(E_1,E_0)$ for some densely embedded Banach couple $E_1\overset{d}{\hookrightarrow}E_0$, if $-A$ generates a strongly continuous analytic semigroup on $E_0$ with $dom(-A)=E_1$.
By means of a well-known result of G.~Da~Prato, P.~Grisvard \cite{DaPra79} and S.~Angenent \cite{Ange90}, this theorem yields the continuous maximal regularity property of $\cA$.
Theorem~\ref{S1: main theorem} generalizes the results in \cite{ShaoSim13} in the sense that taking $\rho\sim {\bf 1}_\M$, it agrees with the continuous maximal regularity theory in \cite{ShaoSim13} on uniformly regular Riemannian manifolds. 
Note that the theory established therein can somehow be considered as a generalization of the work on manifolds with cylindrical ends by R.B. Melrose \cite{Mel81, Mel93} and his collaborators. 

The proof of the main theorem follows from the ideas in \cite[Section~3]{ShaoSim13}. The cornerstone of this proof is a  properly defined retraction and coretraction system between weighted function spaces over manifolds and in Euclidean spaces, see Section~2.3 for the precise definition.  This system enables us to apply the well-studied elliptic and parabolic theory in Euclidean spaces and translate it into the manifold framework. 

One important feature of this paper is that it is application-oriented. We apply Theorem~\ref{S1: main theorem} to several well-known evolution equations. These results are stated in Section~4. As an example in geometric analysis, we show the well-posedness of the Yamabe flow on singular manifolds, in particular, with unbounded curvature. The Yamabe flow arises as an alternative approach to the famous Yamabe problem. 
It was introduced by R.~Hamilton shortly after the Ricci flow, and studied extensively by many authors afterwards. The reader may consult \cite[Section~5]{ShaoSim13} for a brief historical account of this problem.

In addition to its application to geometric analysis, we also apply the main theorem to two well-known relatives of the heat equation, namely, the porous medium equation and the parabolic $p$-Laplacian equation, on a singular manifold $(\M,g)$.  J.L.~V\'azquez \cite{Vaz92, Vaz07} proved  existence and uniqueness of non-negative weak solutions of Dirichlet problems for the porous medium equation. In a landmark article \cite{DasHam98}, P.~Daskalopoulos and R.~Hamilton showed existence and uniqueness of smooth solutions for the porous medium equation, and the smoothness of the free boundary, namely, the boundary of the support of the solution, under mild assumptions on the initial data. In the past decade, there has been rising interest in investigating the porous medium equation on Riemannian manifolds. See \cite{ChaLee12, Dek08, KisShtZla08, OttoWest05, Xu12, Zhu13} for example. To the best of the author's knowledge, research in this direction is all restricted to the case of complete, or even compact, manifolds. The result that we state in Section~4.1 seems to be the first one concerning existence and uniqueness of  solutions to the porous medium equation on manifolds with singularities. 
Following the same strategy, we study the $p$-Laplacian equation, a nonlinear counterpart of the Laplacian equation, which is probably one of the best known examples of degenerate or singular equations in divergence form. In Section~4.3, we explore the parabolic $p$-Laplacian equation on a singular manifold $(\M,g)$:
\begin{equation*}
\left\{\begin{aligned}
\partial_t u -\div_g (|\gd_g u|_g^{p-2}\gd_g u)&=f;  \\
u(0)&=u_0 . &&
\end{aligned}\right.
\end{equation*}
Here $p>1$ with $p\neq 2$. This problem has been studied extensively on Euclidean spaces. The two books \cite{Dib93, DibGiaVesp12} contain a detailed analysis and a  historical account of this problem. There are several generalizations of the elliptic $p$-Laplacian equation on Riemannian manifolds. But fewer have been achieved for its parabolic version above. See \cite{Dek08} for instance. The study of these nonlinear heat equations also produces intriguing applications for degenerate boundary value problems or boundary blow-up problems. In Section~3.2, it is shown that any smooth domain $(\Omega , g_m)$ in $\R^m$ with compact boundary can be realized as a singular manifold, where $g_m$ denotes the Euclidean metric. Then we can prove the local existence and uniqueness of solutions to the following boundary blow-up problem for $1<p<2$ in {\em little H\"older} spaces. 
\begin{equation}
\label{S1: boundary blowup}
\left\{\begin{aligned}
\partial_t u -\div (|D u|^{p-2}D u)&=0 &&\text{on}&& \Omega_T;  \\
u&= \infty &&\text{on}&&\partial\Omega_T;\\
u(0)&=u_0&&\text{on}&&\Omega   &&
\end{aligned}\right.
\end{equation}
as long as the initial data $u_0$ belongs to a properly chosen open subset in some {\em H\"older} space. Here $\Omega_T:=\Omega\times (0,T)$. See Remark~\ref{S4.3: boundary-blowup} for more details.

Another application of the continuous maximal regularity theory established in this paper concerns  parabolic equations with higher order degeneracy on domains with compact boundary. The order of the degeneracy is measured by the rate of decay in the ellipticity condition while approaching the boundary. See Theorem~\ref{S3: MR-domain} for a precise description. This result extends the work in \cite{ForMetPall11, Vesp89} to unbounded domains and to higher order elliptic operators.  In the last subsection, we prove a local existence and uniqueness theorem for a generalized multidimensional thin film equation 
\begin{equation}
\label{S1: gener-TFE}
\left\{\begin{aligned}
\partial_t u +\div (u^n D \Delta u +\alpha_1 u^{n-1}\Delta u D u +\alpha_2 u^{n-2} |D u|^2 D u)&=f  &&\text{on}&&\Omega_T; \\
u(0)&=u_0  &&\text{on }&&\Omega &&
\end{aligned}\right.
\end{equation}
if the initial data decays sufficiently fast to the boundary of its support. Here $\alpha_1,\alpha_2$ are two constants, $n>0$, and $\Omega\subset\R^m$ is a sufficiently smooth domain. This generalized model was first investigated by J.R. King in \cite{King01} in the one dimensional case. Later, a multidimensional counterpart has been studied with periodic boundary condition on cubes in \cite{BouHilRak08}. 
An interesting waiting-time phenomenon can be observed from our approach. The mathematical investigation of the thin film equation was initiated by F. Bernis and A. Friedman in \cite{BerFrd90}. An intriguing feature of free boundary problems associated with degenerate parabolic equations is the waiting-time phenomenon of the supports of the solutions. This phenomenon has been widely observed and studied by many mathematicians. See \cite{DalGiaGrun01, DiaNagShm96, GiaGrun06, Grun02, Shish07} for example. The waiting-time phenomenon for the case $\alpha_1,\alpha_2=0$, the original thin film equation, has been explored in several of the papers listed above. Our result extends the results in the above literature for the generalized system~\eqref{S1: gener-TFE}.

It is worthwhile mentioning that sometimes to establish the theory for nonlinear parabolic equations, in some sense, is easier than that for linear equations. This surprising phenomenon can be observed from the heat equation $\partial_t u -\Delta_g u=0$. Note that $\Delta_g=\ev(g^*,\nabla^2\cdot) $. In this case, the principal symbol of $\Delta_g$ can be computed as
$$\ev(g^*, \xi^{\otimes 2}) = |\xi|_{g^*}^2. $$
The power of the weight function $\rho$ is different from \eqref{S1: ellipticity} in this case. This breaks the uniform ellipticity conditions of the local expressions of the corresponding differential operators as we can observe from the discussion in Section~3 below. Linear differential operators with degeneracy other than $\rho^2$ have been investigated by many authors, including B.-W. Schulze \cite{Sch94, Sch97} and his collaborators. But these results depend heavily on the specific geometric structure near the singular ends. In two subsequent papers \cite{ShaoPre, ShaoPre2}, we treat second order differential operators with a different order of degeneracy from that in \eqref{S1: ellipticity}. In the nonlinear case, the nonlinearities  sometimes give rise to the right power of $\rho$ in \eqref{S1: ellipticity}, as is shown by the examples in Section~4.

This paper is organized as follows. In the rest of this introductory section, we give the precise definitions of uniformly regular Riemannian manifolds and singular manifolds. Section~2 is the stepstone to the theory of differential operators, where we define the weighted {\em H\"older} and {\em little H\"older} spaces on singular manifolds and introduce some of their properties, following the work of H.~Amann in \cite{Ama13, AmaAr}. In Section~3, we establish continuous maximal regularity for differential operators satisfying the conditions in Theorem~\ref{S1: main theorem}. In the last section, several applications of continuous maximal regularity theory are presented.

{\textbf {Assumptions on manifolds:}}
Following H.~Amann \cite{Ama13, AmaAr},
let $(\M,g)$ be a $C^{\infty}$-Riemannian manifold of dimension $m$ with or without boundary endowed with $g$ as its Riemannian metric such that its underlying topological space is separable. An atlas $\A:=(\Ok,\vpk)_{\kappa\in \K}$ for $\M$ is said to be normalized if 
\begin{align*}
\vpk(\Ok)=
\begin{cases}
\Q, \hspace*{1em}& \Ok\subset\mathring{\M},\\
\Q\cap\H, &\Ok\cap\partial\M \neq\emptyset,
\end{cases}
\end{align*}
where $\H$ is the closed half space $\bar{\R}^+ \times\R^{m-1}$ and $\Q$ is the unit cube at the origin in $\R^m$. We put $\Qk:=\vpk(\Ok)$ and  $\psk:=\vpk^{-1}$. 
\smallskip\\
The atlas $\A$ is said to have \emph{finite multiplicity} if there exists $K\in \N $ such that any intersection of more than $K$ coordinate patches is empty. Put
\begin{align*}
\mathfrak{N}(\kappa):=\{\tilde{\kappa}\in\K:\mathsf{O}_{\tilde{\kappa}}\cap\Ok\neq\emptyset \}.
\end{align*} 
The finite multiplicity of $\A$ and the separability of $\M$ imply that $\A$ is countable.
\smallskip\\
An atlas $\A$ is said to fulfil the \emph{uniformly shrinkable} condition, if it is normalized and there exists $r\in (0,1)$ such that $\{\psk(r{\Qk}):\kappa\in\K\}$ is a cover for ${\M}$. 
\smallskip\\
Following H.~Amann \cite{Ama13, AmaAr}, we say that $(\M,g)$ is a {\bf{uniformly regular Riemannian manifold}} if it admits an atlas $\A$ such that
\begin{itemize}
\item[(R1)] $\A$ is uniformly shrinkable and has finite multiplicity. If $\M$ is oriented, then $\A$ is orientation preserving.
\item[(R2)] $\|\varphi_{\eta}\circ\psk \|_{k,\infty}\leq c(k) $, $\kappa\in\K$, $\eta\in\mathfrak{N}(\kappa)$, and $k\in{\N}_0$.
\item[(R3)] $\kf g\sim g_m $, $\kappa\in\K$. Here $g_m$ denotes the Euclidean metric on ${\R}^m$ and $\kf g$ denotes the pull-back metric of $g$ by $\psk$.
\item[(R4)] $\|\kf g\|_{k,\infty}\leq c(k)$, $\kappa\in\K$ and $k\in\Nz$.
\end{itemize}
Here $\|u\|_{k,\infty}:=\max_{|\alpha|\leq k}\|\partial^{\alpha}u\|_{\infty}$, and it is understood that a constant $c(k)$, like in (R2), depends only on $k$. An atlas $\A$ satisfying (R1) and (R2) is called a \emph{uniformly regular atlas}. (R3) reads as
\begin{center}
$|\xi|^2/c\leq \kf g(x)(\xi,\xi) \leq{c|\xi|^2}$,\hspace{.5em} for any $x\in \Qk,\xi\in \R^m, \kappa\in\K$ and some $c\geq{1}$.
\end{center}
In \cite{DisShaoSim}, we have shown that the class of uniformly regular Riemannian manifolds coincides with the family of complete Riemannian manifolds with bounded geometry, when $\partial\M=\emptyset$.

Assume that $\rho\in C^{\infty}(\M,(0,\infty))$. Then $(\rho,\K)$ is a {\em singularity datum} for $\M$ if
\begin{itemize}
\item[(S1)] $(\M,g/\rho^2)$ is a uniformly regular Riemannian manifold.
\item[(S2)] $\A$ is a uniformly regular atlas.
\item[(S3)] $\|\kf\rho\|_{k,\infty}\leq c(k)\rho_{\kappa}$, $\kappa\in\K$ and $k\in\N_0$, where $\rho_{\kappa}:=\rho(\psk(0))$.
\item[(S4)] $\rho_{\kappa}/c\leq \rho(\p)\leq c\rho_{\kappa}$, $\p\in\Ok$ and $\kappa\in\K$ for some $c\geq 1$ independent of $\kappa$.
\end{itemize}
Two {\em singularity data} $(\rho,\K)$ and $(\tilde{\rho},\tilde{\K})$ are equivalent, if
\begin{itemize}
\item[(E1)] $\rho\sim \tilde{\rho}$.
\item[(E2)] card$\{\tilde{\kappa}\in\tilde{\K}:\mathsf{O}_{\tilde{\kappa}}\cap\Ok\neq\emptyset\}\leq c$, $\kappa\in\K$.
\item[(E3)] $\|\varphi_{\tilde{\kappa}}\circ\psk\|_{k,\infty}\leq{c(k)}$, $\kappa\in\K$, $\tilde{\kappa}\in\tilde{\K}$ and $k\in{\N}_0$
\end{itemize}
We write the equivalence relationship as $(\rho,\K)\sim(\tilde{\rho},\tilde{\K})$. (S1) and (E1) imply that 
\begin{align}
\label{section 1:singular data}
1/c\leq \rho_{\kappa}/\tilde{\rho}_{\tilde{\kappa}}\leq c,\hspace*{.5em} \kappa\in\K,\hspace*{.5em} \tilde{\kappa}\in\tilde{\K}\text{ and }\mathsf{O}_{\tilde{\kappa}}\cap\Ok\neq \emptyset.
\end{align}
{\em A singularity structure}, $\mathfrak{S}(\M)$, for $\M$ is a maximal family of equivalent {\em singularity data}. A {\em singularity function} for $\mathfrak{S}(\M)$ is a function $\rho\in C^{\infty}(\M,(0,\infty))$ such that there exists an atlas $\A$ with $(\rho,\A)\in\mathfrak{S}(\M)$. The set of all {\em singularity functions} for $\mathfrak{S}(\M)$ is the {\em singular type}, $\mathfrak{T}(\M)$, for $\mathfrak{S}(\M)$. By a {\bf{singular manifold}} we mean a Riemannian manifold $\M$ endowed with a singularity structure $\mathfrak{S}(\M)$. Then $\M$ is said to be \emph{singular of type} $\mathfrak{T}(\M)$. If $\rho\in\mathfrak{T}(\M)$, then it is convenient to set $[\![\rho]\!]:=\mathfrak{T}(\M)$ and to say that $(\M,g;\rho)$ is a singular manifold. A singular manifold is a uniformly regular Riemannian manifold iff $\rho\sim {\bf 1}_{\M}$. 
\smallskip\\
We refer to \cite{Ama13b, Ama14} for examples of uniformly regular Riemannian manifolds and singular manifolds.
\smallskip\\
A singular manifold $\M$ with a uniformly regular atlas $\A$ admits a \emph{localization system subordinate to} $\A$, by which we mean a family $(\pi_{\kappa},\zeta_{\kappa})_{\kappa\in\K}$ satisfying:
\begin{itemize}
\item[(L1)] $\pk \in\mathcal{D}(\Ok,[0,1])$ and $(\pi_{\kappa}^{2})_{\kappa\in \K} $ is a partition of unity subordinate to $\A$.
\item[(L2)] $\zeta_\kappa :={\kb}\zeta$ with $\zeta\in\mathcal{D}(\Q,[0,1])$ satisfying $\zeta|_{\supp({{\kf}\pi_{\kappa}})}\equiv 1$, $\kappa\in\K$.
\item[(L3)] $\|\kf  \pk \|_{k,\infty} \leq c(k) $, for $\kappa\in\K$, $k\in \Nz$.
\end{itemize}
The reader may refer to \cite[Lemma~3.2]{Ama13} for a proof. 

Lastly, for each $k\in\N$, the concept of {\bf{$C^k$-uniformly regular Riemannian manifold}} is defined by modifying (R2), (R4) and (L1)-(L3) in an obvious way. Similarly, {\bf{$C^k$-singular manifolds}} are defined by replacing the smoothness of $\rho$ by $\rho\in C^k(\M,(0,\infty))$ and altering (S1)-(S3) accordingly.

\textbf{Notations:} Let $\bK\in \{\R,\C \}$. $\N_0$ is the set of all natural numbers including $0$.
For any interval $I$ containing $0$, $\dot{I}:= I \setminus\{0\}$. 
\smallskip\\
For any two Banach spaces $X,Y$, $X\doteq Y$ means that they are equal in the sense of equivalent norms. The notation $\Lis(X,Y)$ stands for the set of all bounded linear isomorphisms from $X$ to $Y$.
\smallskip\\
For any Banach space $E$, we abbreviate $\F(\R^m,E)$ to $\F(E)$, where $\F$ stands for any function space defined in this article. 
The precise definitions for these function spaces will be presented in Section 2. 
\smallskip\\
Let $\| \cdot \|_\infty$, $\| \cdot \|_{s,\infty}$, $\|\cdot \|_p$ and $\|\cdot \|_{s,p}$ denote the usual norm of the Banach spaces $BC(E)$($L_\infty(E)$), $BC^s(E)$, $L_p(E)$ and $W^s_p(E)$, respectively. 
\smallskip\\
We denote $\bK$-valued function spaces with domain $U\in \{\M,\Omega\}$ by $\F(U)$ if $\Omega\subset\R^m$ with $\Omega\neq \R^m$.

\section{\bf Preliminaries}

In this section, we define the weighted function spaces on singular manifolds, following the work of H.~Amann in \cite{Ama13, AmaAr}.
\smallskip\\
Let $\mathbb{A}$ be a countable index set. Suppose $E_{\alpha}$ is for each $\alpha\in\mathbb{A}$ a locally convex space. We endow $\prod_{\alpha}E_{\alpha}$ with the product topology, that is, the coarsest topology for which all projections $pr_{\beta}:\prod_{\alpha}E_{\alpha}\rightarrow{E_{\beta}},(e_{\alpha})_{\alpha}\mapsto{e_{\beta}}$ are continuous. By $\bigoplus_{\alpha}E_{\alpha}$ we mean the vector subspace of $\prod_{\alpha}E_{\alpha}$ consisting of all finitely supported elements, equipped with the inductive limit topology, that is, the finest locally convex topology for which all injections $E_{\beta}\rightarrow\bigoplus_{\alpha}E_{\alpha}$ are continuous. 

\subsection{\bf Tensor bundles}
Suppose $(\M,g;\rho)$ is a singular manifold.
Given $\sigma,\tau\in\N_0$, 
$$T^{\sigma}_{\tau}{\M}:=T{\M}^{\otimes{\sigma}}\otimes{T^{\ast}{\M}^{\otimes{\tau}}}$$ 
is the $(\sigma,\tau)$-tensor bundle of $\M$, where $T{\M}$ and $T^{\ast}{\M}$ are the tangent and the cotangent bundle of ${\M}$, respectively.
We write $\mathcal{T}^{\sigma}_{\tau}{\M}$ for the $C^{\infty}({\M})$-module of all smooth sections of $T^{\sigma}_{\tau}\M$,
and $\Gamma(\M,T^{\sigma}_{\tau}{\M})$ for the set of all sections.

For abbreviation, we set $\J^{\sigma}:=\{1,2,\ldots,m\}^{\sigma}$, and $\J^{\tau}$ is defined alike. Given local coordinates $\varphi=\{x^1,\ldots,x^m\}$, $(i):=(i_1,\ldots,i_{\sigma})\in\J^{\sigma}$ and $(j):=(j_1,\ldots,j_{\tau})\in\J^{\tau}$, we set
\begin{align*}
\frac{\partial}{\partial{x}^{(i)}}:=\frac{\partial}{\partial{x^{i_1}}}\otimes\cdots\otimes\frac{\partial}{\partial{x^{i_{\sigma}}}}, \hspace*{.5em} \partial_{(i)}:=\partial_{i_{1}}\circ\cdots\circ\partial_{i_{\sigma}} \hspace*{.5em} dx^{(j)}:=dx^{j_1}\otimes{\cdots}\otimes{dx}^{j_{\tau}}
\end{align*}
with $\partial_{i}=\frac{\partial}{\partial{x^i}}$. The local representation of 
$a\in \Gamma(\M,T^{\sigma}_{\tau}{\M})$ with respect to these coordinates is given by 
\begin{align}
\label{local}
a=a^{(i)}_{(j)} \frac{\partial}{\partial{x}^{(i)}} \otimes dx^{(j)} 
\end{align}
with coefficients $a^{(i)}_{(j)}$ defined on $\Ok$.
\smallskip\\
We denote by $\nabla=\nabla_g$ the Levi-Civita connection on $T{\M}$. It has a unique extension over $\mathcal{T}^{\sigma}_{\tau}{\M}$ satisfying, for $X\in\mathcal{T}^1_0{\M}$,
\begin{itemize}
\item[(i)] $\nabla_{X}f=\langle{df,X}\rangle$, \hspace{1em}$f\in{C^{\infty}({\M})}$,
\item[(ii)] $\nabla_{X}(a\otimes{b})=\nabla_{X}a\otimes{b}+a\otimes{\nabla_{X}b}$, \hspace{1em}$a\in\mathcal{T}^{{\sigma}_1}_{{\tau}_1}{\M}$, $b\in\mathcal{T}^{{\sigma}_2}_{{\tau}_2}{\M}$,
\item[(iii)] $\nabla_{X}\langle{a,b}\rangle=\langle{\nabla_{X}a,b}\rangle+\langle{a,\nabla_{X}b}\rangle$, \hspace{1em}$a\in\mathcal{T}^{{\sigma}}_{{\tau}}{\M}$, $b\in\mathcal{T}^{{\tau}}_{{\sigma}}{\M}$,
\end{itemize}
where $\langle{\cdot,\cdot}\rangle:\mathcal{T}^{\sigma}_{\tau}{\M}\times{\mathcal{T}^{\tau}_{\sigma}{\M}}\rightarrow{C^{\infty}({\M})}$ is the extension of the fiber-wise defined duality pairing on ${\M}$, cf. \cite[Section 3]{Ama13}. Then the covariant (Levi-Civita) derivative is the linear map
\begin{center}
$\nabla: \mathcal{T}^{\sigma}_{\tau}{\M}\rightarrow{\mathcal{T}^{\sigma}_{\tau+1}{\M}}$, $a\mapsto{\nabla{a}}$
\end{center}
defined by
\begin{center}
$\langle{\nabla{a},b\otimes{X}}\rangle:=\langle{\nabla_{X}a,b}\rangle$, \hspace{1em}$b\in\mathcal{T}^{\tau}_{\sigma}{\M}$, $X\in\mathcal{T}^{1}_{0}{\M}$.
\end{center}
For $k\in{\N}_0$, we define
\begin{center}
$\nabla^k: \mathcal{T}^{\sigma}_{\tau}{\M}\rightarrow{\mathcal{T}^{\sigma}_{\tau+k}{\M}}$, $a\mapsto{\nabla^k{a}}$
\end{center}
by letting $\nabla^0 a:=a$ and $\nabla^{k+1} a:=\nabla\circ\nabla^k a$.
We can also extend the Riemannian metric $(\cdot|\cdot)_g$ from the tangent bundle to any $(\sigma,\tau)$-tensor bundle $T^{\sigma}_{\tau}{\M}$ such that $(\cdot|\cdot)_g:=(\cdot|\cdot)_{g^\tau_\sigma}:T^{\sigma}_{\tau}{\M}\times{T^{\sigma}_{\tau}{\M}}\rightarrow \bK $ by 
$$(a|b)_g =g_{(i)(\tilde{i})} g^{(j)(\tilde{j})}	a^{(i)}_{(j)}  \bar{b}^{(\tilde{i})}_{(\tilde{j})}$$
in every coordinate with $(i),(\tilde{i})\in \J^\sigma$, $(j),(\tilde{j})\in \J^\tau$ and 
$$ g_{(i)(\tilde{i})}:= g_{i_1}g_{\tilde{i}_1}\cdots g_{i_\sigma}g_{\tilde{i}_\sigma},\quad g^{(j)(\tilde{j})}:= g^{j_1} g^{\tilde{j}_1}\cdots g^{j_\tau} g^{\tilde{j}_\tau}.$$ 
In addition,
\begin{center}
$|\cdot|_g:=|\cdot|_{g^\tau_\sigma}:\mathcal{T}^{\sigma}_{\tau}{\M}\rightarrow{C^{\infty}}({\M})$, $a\mapsto\sqrt{(a|a)_g}$
\end{center}
is called the (vector bundle) \emph{norm} induced by $g$.
\smallskip\\
We assume that $V$ is a $\bK$-valued tensor bundle on $\M$ and $E$ is a $\bK$-valued vector space, i.e.,
\begin{center}
$V=V^{\sigma}_{\tau}:=\{T^{\sigma}_{\tau}\M, (\cdot|\cdot)_g\}$,\hspace{.5em} and\hspace{.5em} $E=E^{\sigma}_{\tau}:=\{\bK^{m^{\sigma}\times m^{\tau}},(\cdot|\cdot)\}$, 
\end{center}
for some $\sigma,\tau\in\N_0$. Here $(a|b):=$trace$(b^{\ast}a)$ with $b^{\ast}$ being the conjugate matrix of $b$. By setting $N=m^{\sigma+\tau}$ , we can identify $\F^s(\M,E)$ with $\F^s(\M)^N$. 

Recall that for any $a\in V^\sigma_{\tau+1}$, then $a^\sharp \in V^{\sigma+1}_\tau$ is defined by
$$(a^\sharp)^{(i;k)}_{(j)}:=g^{kl} a^{(i)}_{(j;l)}, \quad (i)\in\J^\sigma,\quad(j)\in\J^\tau,\quad  k,l\in\J^1. $$
We have 
$|a^\sharp|_{g^\tau_{\sigma+1}}=|a|_{g^{\tau+1}_\sigma}.$ 
Given any $a\in V^{\sigma+1}_\tau$, $a_\flat \in V^\sigma_{\tau+1}$ is defined as
$$(a_\flat)^{(i)}_{(j;k)}:=g_{kl} a^{(i;l)}_{(j)}.$$
Similarly, we have
$|a_\flat|_{g^{\tau+1}_\sigma}=|a|_{g_{\sigma+1}^\tau}. $
\smallskip\\
Suppose that $\sigma+\tau\geq 1$. We put for $a\in V^\sigma_\tau$ and $\alpha_i\in T^*\M$, $\beta^j\in T\M$
$$(G^\tau_\sigma a)(\alpha_1,\cdots,\alpha_\tau; \beta^1,\cdots,\beta^\sigma):=a((\beta^1)_\flat ,\cdots,(\beta^\sigma)_\flat;(\alpha_1)^\sharp,\cdots,(\alpha_\tau)^\sharp ).$$
Then it induces a conjugate linear bijection
$$G^\tau_\sigma: V^\sigma_\tau\rightarrow V^\tau_\sigma, \quad (G^\tau_\sigma)^{-1}=G^\sigma_\tau. $$
Consequently, for $a,b\in V$
$$(a|b)_g= \langle a, G^\tau_\sigma b\rangle. $$
From this, it is easy to show that
\begin{align}
\label{S2: G^tau_sisgma}
|G^\tau_\sigma a|_{g^\sigma_\tau}=|a|_{g^\tau_\sigma}.
\end{align}
Throughout the rest of this paper, unless stated otherwise, we always assume that 
\begin{mdframed}
\begin{itemize}
\item $(\M,g;\rho)$ is a singular manifold.
\item $\rho\in \mathfrak{T}(\M)$, $s\geq 0$, and $\vartheta\in\R$.
\item $(\pk,\zeta_{\kappa})_{\kappa\in\K}$ is a localization system subordinate to $\mathfrak{A}$.
\item $\sigma,\tau\in \Nz$, $V=V^{\sigma}_{\tau}:=\{T^{\sigma}_{\tau}\M, (\cdot|\cdot)_g\}$, $E=E^{\sigma}_{\tau}:=\{\bK^{m^{\sigma}\times m^{\tau}},(\cdot|\cdot)\}$.
\end{itemize}
\end{mdframed}
In \cite[Lemma~3.1]{Ama13}, it is shown that $\M$ satisfies the following properties:
\begin{itemize}
\item[(P1)] ${\kf}g\sim \rho^2_{\kappa}g_m$ and ${\kf}g^{\ast}\sim \rho^{-2}_{\kappa}g_m$, where $g^{\ast}$ is the induced contravariant metric.
\item[(P2)] $\rho^{-2}_{\kappa}\|{\kf}g\|_{k,\infty}+\rho^2_{\kappa}\|{\kf}g^{\ast}\|_{k,\infty}\leq c(k)$, $k\in\N_0$ and $\kappa\in\K$.
\item[(P3)] For $\sigma,\tau\in\N_0$ given, then 
\begin{center}
${\kf}(|a|_g)\sim \rho^{\sigma-\tau}_{\kappa}|{\kf}a|_{g_m}$, \hspace{1em}$a\in \mathcal{T}^{\sigma}_{\tau}\M$,
\end{center}
and
\begin{center}
$|{\kb}b|_g \sim \rho^{\sigma-\tau}_\kappa \kb(|b|_{g_m})$, \hspace{1em}$b\in \mathcal{T}^{\sigma}_{\tau}\Qk$.
\end{center}
\end{itemize}
\smallskip 
For $K\subset \M$, we put $\K_{K}:=\{\kappa\in \K: \Ok\cap K\neq\emptyset\}$. Then, given $\kappa\in\K$,
\begin{align*}
\Xk:=
\begin{cases}
\R^m \hspace*{1em}\text{if }\kappa\in \K\setminus \K_{\partial\M},\\
\H \hspace*{1em}\text{otherwise,}
\end{cases}
\end{align*}
endowed with the Euclidean metric $g_m$.

Given $a\in \Gamma(\M,V)$ with local representation $\eqref{local}$
we define ${\kf}a\in E$ by means of
$ {\kf}a=[a^{(i)}_{(j)}]$,
where $[a^{(i)}_{(j)}]$ stands for the $(m^{\sigma}\times m^{\tau})$-matrix with entries $a^{(i)}_{(j)}$ in the $((i),(j))$ position, with $(i)$, $(j)$ arranged lexicographically.

\subsection{\bf Weighted function spaces}
For the sake of brevity, we set $\boldsymbol{L}_{1,loc}(\X,E):=\prod_{\kappa}{L}_{1,loc}({\Xk},E)$. Then we introduce two linear maps for $\kappa\in\K$:
\begin{center}
$\Rck:{L}_{1,loc}({\M},V)\rightarrow{L}_{1,loc}({\Xk},E)$, $u\mapsto{\psk^{\ast}({\pk}u)}$,
\end{center}
and
\begin{center}
$\Rek:{L}_{1,loc}({\Xk},E)\rightarrow{L}_{1,loc}({\M},V)$, $v_{\kappa}\mapsto{\pk}{\kb}v_{\kappa}$.
\end{center}
Here and in the following it is understood that a partially defined and compactly supported tensor field is automatically extended over the whole base manifold by identifying it to be zero  outside its original domain.
We define
$$\Rc:{L}_{1,loc}({\M},V)\rightarrow\boldsymbol{L}_{1,loc}({{\R}^{m}}),\quad u\mapsto{(\Rck u)_{\kappa}},$$
and
$$\Re:\boldsymbol{L}_{1,loc}({{\R}^{m}})\rightarrow{L}_{1,loc}({\M},V),\quad (v_{\kappa})_{\kappa}\mapsto{\sum_{\kappa}\Rek v_\kappa}.$$ 
\smallskip\\
In the rest of this subsection we assume that $k\in\Nz$. In the first place, we list some prerequisites for the {\em H\"older} and {\em little H\"older} spaces on $\X\in\{\R^m,\H\}$ from \cite[Section~11]{AmaAr}. 
Given any Banach space $F$,
the Banach space $BC^k(\X,F)$ is defined by
\begin{align*}
BC^k(\X,F):=(\{u\in C^k(\X,F):\|u\|_{k,\infty}<\infty \},\|\cdot\|_{k,\infty}).
\end{align*} 
The closed linear subspace $BU\!C^k(\X,F)$ of $BC^{k}(\X,F)$ consists of all functions $u\in BC^{k}(\X,F)$ such that $\partial^{\alpha}u$ is uniformly continuous for all $|\alpha|\leq k$. Moreover,
\begin{align*}
BC^{\infty}(\X,F):=\bigcap_{k}BC^{k}(\X,F)=\bigcap_{k}BU\!C^{k}(\X,F).
\end{align*}
It is a Fr\'echet space when equipped with the natural projective topology. 
\smallskip\\
For $0<s<1$, $0<\delta\leq\infty$ and $u\in F^{\X}$, the seminorm $[\cdot]^{\delta}_{s,\infty}$ is defined by
\begin{align*}
[u]^{\delta}_{s,\infty}:=\sup_{h\in(0,\delta)^m}\frac{\|u(\cdot+h)-u(\cdot)\|_{\infty}}{|h|^s}, \hspace*{1em}[\cdot]_{s,\infty}:=[\cdot]^{\infty}_{s,\infty}.
\end{align*}
Let $k<s<k+1$. The {\em H\"older} space $BC^{s}(\X,F)$ is defined as 
\begin{align*}
BC^{s}(\X,F):=(\{u\in BC^k(\X,F):\|u\|_{s,\infty}<\infty \},\|\cdot\|_{s,\infty}),
\end{align*}
where $\|u\|_{s,\infty}:=\|u\|_{k,\infty}+\max_{|\alpha|=k}[\partial^{\alpha} u]_{s-k,\infty}$.
\smallskip\\
The {\em little H\"older} space of order $s\geq 0$ is defined by
\begin{center}
$bc^s(\X,F):=$ the closure of $BC^{\infty}(\X,F)$ in $BC^{s}(\X,F)$.
\end{center}
By \cite[formula~(11.13), Corollary~11.2, Theorem~11.3]{AmaAr}, we have 
$$
bc^k(\X,F)=BU\!C^k(\X,F),
$$
and for $k<s<k+1$
\begin{center}
$u\in BC^s(\X,F)$ belongs to $bc^s(\X,F)$ iff $\lim\limits_{\delta\rightarrow 0}[\partial^{\alpha}u]^{\delta}_{s-[s],\infty}=0$, \hspace{1em} $|\alpha|=[s]$.
\end{center}
Now we are ready to introduce the weighted {\em H\"older} and {\em little H\"older} spaces on singular manifolds.
Define
$$BC^{k,\vartheta}(\M,V):=(\{u\in{C^k({\M},V)}:\|u\|_{k,\infty;\vartheta}<\infty\},\|\cdot\|_{k,\infty;\vartheta}),$$
where $\|u\|_{k,\infty;\vartheta}:={\max}_{0\leq{i}\leq{k}}\|\rho^{\vartheta+i+\tau-\sigma}|\nabla^{i}u|_{g}\|_{\infty}$. 
We also set
$$BC^{\infty,\vartheta}(\M,V):=\bigcap_{k}BC^{k,\vartheta}(\M,V)$$
endowed with the conventional projective topology. Then
$$
bc^{k,\vartheta}(\M,V):=\text{ the closure of }BC^{\infty,\vartheta}\text{ in }BC^{k,\vartheta}(\M,V).
$$
Let $k<s<k+1$. Now the {\em H\"older} space $BC^{s,\vartheta}(\M,V)$ is defined by
\begin{equation}
\label{S2.2: def-BC^s}
BC^{s,\vartheta}(\M,V):=(bc^{k,\vartheta}(\M,V),bc^{k+1,\vartheta}(\M,V))_{s-k,\infty}.
\end{equation}
Here $(\cdot,\cdot)_{\theta,\infty}$ is the real interpolation method, see \cite[Example I.2.4.1]{Ama95} and \cite[Definition~1.2.2]{Lunar95}. $BC^{s,\vartheta}(\M,V)$ equipped with the norm $\|\cdot \|_{s,\infty; \vartheta}$ is a Banach space by interpolation theory, where $\|\cdot \|_{s,\infty; \vartheta}$ is the norm of the interpolation space in definition~\eqref{S2.2: def-BC^s}. For $s\geq 0$, we define the weighted {\em little H\"older} spaces by 
\begin{equation}
\label{S2.2: def-bc^s}
bc^{s,\vartheta}(\M,V):=\text{ the closure of }BC^{\infty,\vartheta}(\M,V) \text{ in }BC^{s,\vartheta}(\M,V).
\end{equation}

\subsection{\bf Basic properties} 
In the following context, assume that $E_\kappa$ is a sequence of Banach spaces for $\kappa\in\K$. Then $\bE:=\prod_{\kappa}E_{\kappa}$. We denote by $l_\infty^\vartheta(\bE):=l_\infty^\vartheta(\bE;\rho)$ the linear subspace of $\bE$ consisting of all $\bu=(u_\kappa)$ such that
\begin{align*}
\|\bu\|_{l_\infty^\vartheta(\bE)}:=
\sup\limits_\kappa\|\rho_\kappa^\vartheta u_\kappa\|_{E_\kappa}<\infty.
\end{align*}
Then $l_\infty^\vartheta(\bE)$ is a Banach space with norm $\|\cdot\|_{l^\vartheta_\infty(\bE)}$. 
For $\F \in \{bc,BC\}$, we put $\bF^s:=\prod_{\kappa}{\F}^s_{\kappa}$, where ${\F}^s_{\kappa}:={\F}^s(\Xk,E)$. 
Denote by 
\begin{align*}
l^\vartheta_{\infty,\uf}(\boldsymbol{bc}^k)
\end{align*}
the linear subspace of $l^\vartheta_\infty(\boldsymbol{BC}^k)$ of all $\bu=(u_{\kappa})_{\kappa}$ such that $\rho_\kappa^\vartheta\partial^{\alpha}u_{\kappa}$ is uniformly continuous on $\Xk$ for $|\alpha|\leq k$, uniformly with respect to $\kappa\in\mathfrak{K}$. 
Similarly, for any $k<s<k+1$, we denote by 
\begin{align*}
l^\vartheta_{\infty,\uf}(\boldsymbol{bc}^s)
\end{align*}
the linear subspace of $l^\vartheta_{\infty,\uf}(\boldsymbol{bc}^k)$ of all $\bu=(u_{\kappa})_{\kappa}$ such that 
$$
\lim\limits_{\delta\rightarrow 0}\max_{|\alpha|=k}\rho_\kappa^\vartheta[\partial^{\alpha}u_{\kappa}]^{\delta}_{s-k,\infty}=0, 
$$
uniformly with respect to $\kappa\in\mathfrak{K}$. 

In the sequel, we always assume $\F\in\{bc,BC\}$, unless stated otherwise.
Define
$$L_\vartheta: l^{\vartheta^\prime+\vartheta}_b(\bF^s)\rightarrow l^{\vartheta^\prime}_b(\bF^s)): \quad (u_\kappa)_\kappa\mapsto (\rho_\kappa^\vartheta u_\kappa)_\kappa,$$ 
where $b=``\infty,\uf"$ for $\F=bc$, and $b=\infty$ for $\F=BC$.   Then we have the following proposition.
\begin{prop}
\label{S2: change of wgt-lq}
$L_\vartheta \in \Lis(l^{\vartheta^\prime+\vartheta}_b(\bF^s),l^{\vartheta^\prime}_b(\bF^s))$ with $(L_\vartheta)^{-1}=L_{-\vartheta}$.
\end{prop}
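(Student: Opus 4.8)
The plan is to verify directly that $L_\vartheta$ is a well-defined bounded linear map with bounded two-sided inverse $L_{-\vartheta}$, the latter being essentially the only substantive point. Linearity is immediate since multiplication by the fixed scalar sequence $(\rho_\kappa^\vartheta)_\kappa$ is linear componentwise. For boundedness, I would unwind the definitions: if $\bu=(u_\kappa)_\kappa\in l^{\vartheta'+\vartheta}_b(\bF^s)$, then by definition $\sup_\kappa \|\rho_\kappa^{\vartheta'+\vartheta} u_\kappa\|_{\F^s_\kappa}<\infty$, and since $\rho_\kappa^{\vartheta'}(\rho_\kappa^\vartheta u_\kappa)=\rho_\kappa^{\vartheta'+\vartheta}u_\kappa$, we get $\|L_\vartheta\bu\|_{l^{\vartheta'}_b(\bF^s)}=\sup_\kappa\|\rho_\kappa^{\vartheta'}(\rho_\kappa^\vartheta u_\kappa)\|_{\F^s_\kappa}=\|\bu\|_{l^{\vartheta'+\vartheta}_b(\bF^s)}$. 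In other words $L_\vartheta$ is a surjective isometry at the level of the norms, once we know it maps into the correct space.

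The one thing that genuinely needs checking is that $L_\vartheta$ really lands in $l^{\vartheta'}_b(\bF^s)$, i.e.\ that the decoration $b$ (``$\infty,\uf$'' when $\F=bc$, or ``$\infty$'' when $\F=BC$) is preserved. For $\F=BC$ this is vacuous: the $BC$-case imposes no uniformity beyond finiteness of the supremum, so the isometry computation above already shows $L_\vartheta\bu\in l^{\vartheta'}_\infty(\bF^s)$. For $\F=bc$ I must check that multiplication by the constant $\rho_\kappa^\vartheta$ preserves the uniform-in-$\kappa$ equicontinuity conditions defining $l^\vartheta_{\infty,\uf}(\boldsymbol{bc}^k)$ and $l^\vartheta_{\infty,\uf}(\boldsymbol{bc}^s)$. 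But this is transparent because within each patch $\rho_\kappa^\vartheta$ is a \emph{constant}: for $|\alpha|\le k$ one has $\rho_\kappa^{\vartheta'}\partial^\alpha(\rho_\kappa^\vartheta u_\kappa)=\rho_\kappa^{\vartheta'+\vartheta}\partial^\alpha u_\kappa$, whose uniform continuity on $\Xk$, uniformly in $\kappa$, is exactly the hypothesis on $\bu$; and for $k<s<k+1$, $\rho_\kappa^{\vartheta'}[\partial^\alpha(\rho_\kappa^\vartheta u_\kappa)]^\delta_{s-k,\infty}=\rho_\kappa^{\vartheta'+\vartheta}[\partial^\alpha u_\kappa]^\delta_{s-k,\infty}$, so the seminorms go to zero as $\delta\to 0$ uniformly in $\kappa$ precisely when they do so for $\bu$. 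Thus $L_\vartheta$ maps $l^{\vartheta'+\vartheta}_b(\bF^s)$ into $l^{\vartheta'}_b(\bF^s)$.

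Finally, $L_\vartheta L_{-\vartheta}=L_{-\vartheta}L_\vartheta=\id$ on the relevant spaces follows from $\rho_\kappa^\vartheta\rho_\kappa^{-\vartheta}=1$ componentwise, with $L_{-\vartheta}$ bounded (indeed isometric) by the same argument applied with $-\vartheta$ and $\vartheta'+\vartheta$ in place of $\vartheta$ and $\vartheta'$. Hence $L_\vartheta$ is a bounded linear isomorphism onto $l^{\vartheta'}_b(\bF^s)$ with $(L_\vartheta)^{-1}=L_{-\vartheta}$, which is the claim. There is no real obstacle here; the only point requiring a moment's care is the bookkeeping that the per-patch scaling factor $\rho_\kappa^\vartheta$ is a constant on $\Ok$ and therefore commutes with all the differential operators and difference quotients appearing in the norms, so none of the $bc$-type uniformity conditions are disturbed.
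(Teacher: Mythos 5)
Your proof is correct and is simply the full unwinding of what the paper dispatches in one line ("follows immediately from the definition of weighted $l_b$ spaces"): the key observation in both cases is that $\rho_\kappa^{\vartheta'}\rho_\kappa^{\vartheta}=\rho_\kappa^{\vartheta'+\vartheta}$ makes $L_\vartheta$ an isometry, with the per-patch constancy of $\rho_\kappa^\vartheta$ preserving the $bc$-uniformity conditions. No discrepancy with the paper's approach.
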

\begin{proof}
This follows immediately from the definition of weighted $l_b$ spaces.
\end{proof}

\begin{prop}
\label{S2: retraction&coretraction}
$\Re$ is a retraction from $l_b^\vartheta(\bF^s)$ onto $\F^{s,\vartheta}(\M,V)$ with $\Rc$ as a coretraction. Here $b=``\infty,\uf"$ for $\F=bc$, and $b=\infty$ for $\F=BC$. 
\end{prop}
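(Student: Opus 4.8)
The plan is to verify the retraction--coretraction identity $\Re\circ\Rc=\id_{\F^{s,\vartheta}(\M,V)}$ and then establish the requisite mapping properties of $\Rc$ and $\Re$ on the relevant spaces. First I would check the algebraic identity: for $u\in L_{1,loc}(\M,V)$ one has $\Re\Rc u=\sum_\kappa \pk\,\kb\,\psk^\ast(\pk u)=\sum_\kappa \pk^2 u=u$, using (L1) that $(\pk^2)_\kappa$ is a partition of unity subordinate to $\A$ and that $\kb\circ\psk^\ast=\id$ on sections supported in $\Ok$. This part is purely formal.

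Next I would prove boundedness of $\Rc$ from $\F^{s,\vartheta}(\M,V)$ into $l_b^\vartheta(\bF^s)$. The key point is to transfer the weighted norm on $\M$ (defined via $\rho^{\vartheta+i+\tau-\sigma}|\nabla^i u|_g$) to the Euclidean norms of the local pushforwards $\psk^\ast(\pk u)$. Here I would combine: property (P3), which relates $\kf(|a|_g)$ to $\rho_\kappa^{\sigma-\tau}|\kf a|_{g_m}$; property (S4)/\eqref{section 1:singular data}, which lets one replace $\rho$ by the constant $\rho_\kappa$ on $\Ok$ up to a fixed factor; properties (P1)--(P2), which control the pushed-forward metric and its derivatives by powers of $\rho_\kappa$; and (L3) together with (R2), which bound the derivatives of $\kf\pk$ and of the transition maps uniformly in $\kappa$. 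These estimates show that $\nabla^i$ transforms into the flat derivatives $\partial^\alpha$, $|\alpha|\le i$, with coefficients uniformly bounded after the $\rho_\kappa$-rescaling, so that $\|\rho_\kappa^\vartheta \psk^\ast(\pk u)\|_{\F^s(\Xk,E)}\le c\|u\|_{\F^{s,\vartheta}(\M,V)}$ uniformly in $\kappa$; I would first do this for integer $s=k$ (where the norm is given directly by derivatives) and then pass to non-integer $s$ by the interpolation definition \eqref{S2.2: def-BC^s} and Proposition~\ref{S2: change of wgt-lq}. The little H\"older case requires in addition checking that the uniform-continuity and uniform-vanishing-of-seminorm conditions defining $l^\vartheta_{\infty,\uf}(\bm{bc}^s)$ are preserved; since the finite-multiplicity of $\A$ (R1) makes the sum $\sum_\kappa$ locally finite and the transition estimates are uniform, this follows from the corresponding statement for a single chart. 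Boundedness of $\Re$ from $l_b^\vartheta(\bF^s)$ into $\F^{s,\vartheta}(\M,V)$ is analogous but uses the second half of (P3) (the estimate for $\kb b$), the finite multiplicity to sum the local contributions, and the cutoff $\zeta_\kappa$ from (L2) to localize; again one does integers first and interpolates, and for $\F=bc$ one approximates each $u_\kappa$ by $BC^\infty$ functions to land in $bc^{s,\vartheta}(\M,V)$.

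The main obstacle I anticipate is the non-integer-order bookkeeping: the weighted H\"older norm on $\M$ is \emph{defined} by real interpolation, not intrinsically, so one cannot directly estimate a H\"older seminorm of $\psk^\ast(\pk u)$ by one of $u$. The clean route is to prove the two boundedness statements for integer orders $k$ and $k+1$, observe that $\Rc$ and $\Re$ commute appropriately with the interpolation functor (both are defined independently of $s$), and then invoke the interpolation property of $(\cdot,\cdot)_{\theta,\infty}$ together with the retraction identity already established; the abstract fact that a retraction/coretraction pair between two Banach couples induces one between the interpolation spaces then gives the result for all $s\ge 0$. Verifying that the Euclidean spaces $\F^s(\Xk,E)$ themselves form interpolation scales in the same way (so the functor lands where we want) is where I would lean on \cite{AmaAr}. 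Once these pieces are in place, Proposition~\ref{S2: retraction&coretraction} follows.
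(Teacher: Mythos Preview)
Your proposal is essentially correct in strategy, but it takes a much longer route than the paper. The paper's proof is a two-line reduction: it observes that in \cite{AmaAr} a retraction--coretraction pair $(\mathcal{R}^\vartheta_\infty,\mathcal{R}^{\vartheta;c}_\infty)$ between $\F^{s,\vartheta}(\M,V)$ and the \emph{unweighted} sequence space $l_b(\bF^s)$ has already been established (Theorems~12.1, 12.3 and formula~(12.7) there), where $\mathcal{R}^{\vartheta;c}_{\infty,\kappa}=\rho_\kappa^\vartheta\Rck$ and $\mathcal{R}^\vartheta_{\infty,\kappa}=\rho_\kappa^{-\vartheta}\Rek$. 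One then simply writes $\mathcal{R}^{\vartheta;c}_\infty=L_\vartheta\circ\Rc$ and $\mathcal{R}^\vartheta_\infty=\Re\circ L_{-\vartheta}$, and since $L_\vartheta\in\Lis(l_b^\vartheta(\bF^s),l_b(\bF^s))$ by Proposition~\ref{S2: change of wgt-lq}, the result for $(\Re,\Rc)$ follows immediately by composition.

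What you propose---verifying $\Re\Rc=\id$ algebraically, proving boundedness at integer orders via (P1)--(P3), (S4), (L1)--(L3), (R1)--(R2), and then interpolating---is precisely the content of the cited theorems in \cite{AmaAr} (modulo the trivial weight shift). So your argument would work, but it re-derives machinery the paper deliberately imports. The paper's approach buys economy and makes transparent that the only new ingredient beyond \cite{AmaAr} is the passage from the unweighted sequence space $l_b(\bF^s)$ to the weighted one $l_b^\vartheta(\bF^s)$ via the diagonal isomorphism $L_\vartheta$. Your approach, by contrast, would give a self-contained account and make explicit which structural axioms are actually used at each step---useful if one wanted to weaken hypotheses, but redundant here.
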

\begin{proof}
In \cite{AmaAr}, a different retraction and coretraction system between $\F^{s,\vartheta}(\M,V)$ and $l_b(\bF^s)$ is defined as follows.
$$\mathcal{R}^{\vartheta; c}_{\infty,\kappa}:=\rho_{\kappa}^{\vartheta}\mathcal{R}^c_{\kappa},\quad \text{and}\quad \mathcal{R}^\vartheta_{\infty,\kappa}:=\rho_{\kappa}^{-\vartheta}\mathcal{R}_{\kappa};$$
and 
\begin{align*}
&\mathcal{R}^{\vartheta; c}_\infty:{L}_{1,loc}({\M},V)\rightarrow\boldsymbol{L}_{1,loc}({{\R}^{m}}),\quad u\mapsto{(\mathcal{R}^{\vartheta; c}_{\infty,\kappa}u)_{\kappa}},\\
\vspace{1em}
&\mathcal{R}^\vartheta_\infty:\boldsymbol{L}_{1,loc}({{\R}^{m}})\rightarrow{L}_{1,loc}({\M},V),\quad (v_{\kappa})_{\kappa}\mapsto{\sum_{\kappa}\mathcal{R}^\vartheta_{\infty,\kappa}v_{\kappa}}.
\end{align*}
We have the following relationship between these two retraction and coretraction systems:
$$\mathcal{R}^{\vartheta; c}_\infty=L_\vartheta\circ\Rc,\quad \mathcal{R}^\vartheta_\infty=\Re\circ L_{-\vartheta}. $$
Now the assertion follows straight away from Proposition~\ref{S2: change of wgt-lq} and \cite[Theorems~12.1, 12.3, formula~(12.7)]{AmaAr}. 
\end{proof}

In the sequel, $(\cdot,\cdot)^0_{\theta,\infty}$ and $[\cdot,\cdot]_\theta$ denote the continuous interpolation method and the complex interpolation method, respectively. See \cite[Example I.2.4.2, I.2.4.4]{Ama95} for definitions.
\begin{prop}
\label{S2: interpolation}
Suppose that $0<s_0<s_1<\infty$, $0<\theta<1$ and $\vartheta\in \R$. Then
$$(\F^{s_0,\vartheta}(\M,V), \F^{s_1,\vartheta}(\M,V))_\theta \doteq \F^{s_\theta,\vartheta}(\M,V)\doteq [\F^{s_0,\vartheta}(\M,V), \F^{s_1,\vartheta}(\M,V)]_\theta$$
holds for $s_0,s_1,s_\theta\notin \N$. When $\F=bc$,  $(\cdot,\cdot)_\theta=(\cdot,\cdot)^0_{\theta,\infty}$, and when $\F=BC$,  $(\cdot,\cdot)_\theta=(\cdot,\cdot)_{\theta,\infty}$. 
Here $\xi_\theta:=(1-\theta)\xi_0+\theta \xi_1$ for any $\xi_0,\xi_1\in \R$.
\end{prop}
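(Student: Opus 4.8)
The plan is to reduce the interpolation statement on the manifold to the corresponding (known) interpolation identities for the little Hölder and Hölder scales on $\X\in\{\R^m,\H\}$ by transporting everything through the retraction--coretraction system of Proposition~\ref{S2: retraction&coretraction}. First I would recall that a retraction $r\colon X\to Y$ with coretraction $e\colon Y\to X$ (so $r\circ e=\id_Y$) is compatible with every interpolation functor $\mathcal{F}$: if $X_j$, $j=0,1$, is an interpolation couple and $r,e$ restrict to bounded maps on each $X_j$ and $Y_j$, then $\mathcal{F}(Y_0,Y_1)\doteq Y$, where $Y=r(\mathcal{F}(X_0,X_1))$, and in fact $\mathcal{F}(Y_0,Y_1)$ is a complemented subspace of $\mathcal{F}(X_0,X_1)$ via $e$. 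This is \cite[Lemma~I.2.3.2]{Ama95} or the standard retract argument. Applying this with $Y_j=\F^{s_j,\vartheta}(\M,V)$, $X_j=l_b^\vartheta(\bF^{s_j})$, $r=\Re$, $e=\Rc$ (which are bounded on each piece by Proposition~\ref{S2: retraction&coretraction}), the problem is reduced to computing $(l_b^\vartheta(\bF^{s_0}),l_b^\vartheta(\bF^{s_1}))_\theta$ and the complex analogue.

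Next I would strip off the weight: by Proposition~\ref{S2: change of wgt-lq}, $L_\vartheta$ is an isomorphism $l_b^{\vartheta}(\bF^s)\to l_b^0(\bF^s)$ commuting with the couple structure (it acts diagonally and does not see $s$), so $(l_b^\vartheta(\bF^{s_0}),l_b^\vartheta(\bF^{s_1}))_\theta\doteq (l_b^0(\bF^{s_0}),l_b^0(\bF^{s_1}))_\theta$, and similarly for $[\cdot,\cdot]_\theta$. Then I would invoke the stability of $l_\infty$-type (and $l_{\infty,\uf}$-type) vector-valued sequence spaces under interpolation together with the scalar result on $\X$: namely
\begin{align*}
(bc^{s_0}(\Xk,E),bc^{s_1}(\Xk,E))^0_{\theta,\infty}&\doteq bc^{s_\theta}(\Xk,E)\doteq [bc^{s_0}(\Xk,E),bc^{s_1}(\Xk,E)]_\theta,\\
(BC^{s_0}(\Xk,E),BC^{s_1}(\Xk,E))_{\theta,\infty}&\doteq BC^{s_\theta}(\Xk,E)\doteq [BC^{s_0}(\Xk,E),BC^{s_1}(\Xk,E)]_\theta,
\end{align*}
valid for $s_0,s_1,s_\theta\notin\N$; these are classical (see \cite{AmaAr}, \cite{Lunar95}, \cite{Ama95}) and—crucially—hold with constants uniform in $\kappa$ because all the $\Xk$ are translates of $\R^m$ or $\H$ with the Euclidean metric and the underlying charts are uniformly equivalent. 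Passing this uniform estimate through the $l_{\infty,\uf}^0$ construction (which is exactly engineered so that the ``$\lim_{\delta\to0}\cdots$ uniformly in $\kappa$'' defining conditions of $l_{\infty,\uf}^0(\boldsymbol{bc}^{s})$ interpolate correctly) gives $(l_b^0(\bF^{s_0}),l_b^0(\bF^{s_1}))_\theta\doteq l_b^0(\bF^{s_\theta})$, and likewise for the complex method. Reassembling via the retract argument yields the asserted chain of equivalences.

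I expect the main obstacle to be the interchange of the interpolation functor with the infinite product/sequence-space construction $l_{\infty,\uf}^\vartheta(\bF^{s})$ in a way that correctly tracks the \emph{little} Hölder condition. Plain $l_\infty$ of Banach spaces does not in general commute with real interpolation (one only gets a one-sided inclusion for $K$-functionals), but the continuous interpolation method $(\cdot,\cdot)^0_{\theta,\infty}$ is tailored to closed subspaces obtained as closures of smooth functions, and the uniform-in-$\kappa$ qualifiers in the definitions of $l^\vartheta_{\infty,\uf}(\boldsymbol{bc}^k)$ and $l^\vartheta_{\infty,\uf}(\boldsymbol{bc}^s)$ are precisely the right conditions to make the identification go through—this is the content one should cite from \cite{AmaAr} (e.g.\ the results referenced around \cite[Theorems~12.1, 12.3]{AmaAr}) rather than reprove. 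The remaining care is purely bookkeeping: checking the boundary cases $s_0,s_1,s_\theta\notin\N$ are needed exactly so that each $\F^{s_j}$ coincides with a genuine Hölder space (and $bc^{s_j}$ with its closure description), and verifying that $L_\vartheta$ and the retraction system are simultaneously bounded on both endpoints, which is immediate from Propositions~\ref{S2: change of wgt-lq} and \ref{S2: retraction&coretraction}.
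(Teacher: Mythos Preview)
Your approach is correct and is essentially the same as the paper's: the paper simply cites \cite[Corollaries~12.2, 12.4]{AmaAr}, and what you have written is precisely the retraction--coretraction argument (combined with Proposition~\ref{S2: change of wgt-lq} and the sequence-space interpolation of \cite[Lemmas~11.10, 11.11]{AmaAr}) that underlies those corollaries. In effect you have unpacked the cited reference rather than taken a different route.
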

\begin{proof}
See \cite[Corollaries~12.2, 12.4]{AmaAr}.
\end{proof}

\begin{prop}
\label{S2: interpolation-l_b}
Suppose that $0<s_0<s_1<\infty$, $0<\theta<1$ and $\vartheta \in \R$. Then
$$(l_b^\vartheta(\bF^{s_0}),l_b^\vartheta(\bF^{s_1}))_\theta\doteq 
l_b^\vartheta(\bF^{s_\theta}) \doteq 
[l_b^\vartheta(\bF^{s_0}),l_b^\vartheta(\bF^{s_1})]_\theta $$
holds for $s_0,s_1,s_\theta\notin \N$. When $\F=bc$, $b=``\infty,\uf"$ and $(\cdot,\cdot)_\theta=(\cdot,\cdot)^0_{\theta,\infty}$, and when $\F=BC$, $b=\infty$ and $(\cdot,\cdot)_\theta=(\cdot,\cdot)_{\theta,\infty}$.  
\end{prop}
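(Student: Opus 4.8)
The plan is to reduce the interpolation identities for the sequence spaces $l_b^\vartheta(\bF^s)$ to the already-established interpolation identities for the weighted H\"older and little H\"older spaces on $\M$ (Proposition~\ref{S2: interpolation}), using the retraction/coretraction system of Proposition~\ref{S2: retraction&coretraction}. The point is that $\Re$ is a retraction of $l_b^\vartheta(\bF^s)$ onto $\F^{s,\vartheta}(\M,V)$ with coretraction $\Rc$, for every admissible $s$ simultaneously, and the same pair of maps works for all three exponents $s_0$, $s_1$, $s_\theta$. Since all the interpolation functors in sight --- $(\cdot,\cdot)_{\theta,\infty}$, $(\cdot,\cdot)^0_{\theta,\infty}$, and $[\cdot,\cdot]_\theta$ --- are exact interpolation functors of exponent $\theta$, they commute with retractions: if $(E_0,E_1)$ is a retract of $(F_0,F_1)$ via a fixed retraction $r$ and coretraction $r^c$, then $(E_0,E_1)_\theta$ is a retract of $(F_0,F_1)_\theta$ via the same $r$, $r^c$, and in particular $(E_0,E_1)_\theta \doteq r\big((F_0,F_1)_\theta\big)$ with equivalence of norms. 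This is the abstract fact I would invoke (it is standard; see e.g. the references already cited, \cite{Ama95, Lunar95, AmaAr}).

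First I would record that, by Proposition~\ref{S2: retraction&coretraction} applied with $s=s_0$ and with $s=s_1$, the couple $\big(\F^{s_0,\vartheta}(\M,V),\F^{s_1,\vartheta}(\M,V)\big)$ is a retract of $\big(l_b^\vartheta(\bF^{s_0}),l_b^\vartheta(\bF^{s_1})\big)$ with retraction $\Re$ and coretraction $\Rc$ (these are the same linear maps for both values of $s$, by construction). Next, for a generic exact interpolation functor $\mathcal{F}$ of exponent $\theta$, applying $\mathcal{F}$ to this retraction gives that $\Re$ maps $\mathcal{F}\big(l_b^\vartheta(\bF^{s_0}),l_b^\vartheta(\bF^{s_1})\big)$ onto $\mathcal{F}\big(\F^{s_0,\vartheta}(\M,V),\F^{s_1,\vartheta}(\M,V)\big)$ and $\Rc$ embeds the latter back into the former, with $\Re\circ\Rc=\id$. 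Since $\Rc\circ\Re$ is a projection onto the closed subspace $\Rc\big(\mathcal{F}(\F^{s_0,\vartheta},\F^{s_1,\vartheta})\big)$, the target interpolation space is isomorphic (via $\Rc$) to that subspace of $\mathcal{F}\big(l_b^\vartheta(\bF^{s_0}),l_b^\vartheta(\bF^{s_1})\big)$.

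Now I would use Proposition~\ref{S2: interpolation}, which identifies $\mathcal{F}\big(\F^{s_0,\vartheta}(\M,V),\F^{s_1,\vartheta}(\M,V)\big)\doteq \F^{s_\theta,\vartheta}(\M,V)$ (with $\mathcal{F}=(\cdot,\cdot)^0_{\theta,\infty}$ when $\F=bc$, and $\mathcal{F}=(\cdot,\cdot)_{\theta,\infty}$ when $\F=BC$, and also $\mathcal{F}=[\cdot,\cdot]_\theta$ in both cases), together with Proposition~\ref{S2: retraction&coretraction} once more at exponent $s_\theta$: the latter says $\F^{s_\theta,\vartheta}(\M,V)$ is itself the image of $l_b^\vartheta(\bF^{s_\theta})$ under $\Re$, with coretraction $\Rc$, so $\Rc\big(\F^{s_\theta,\vartheta}(\M,V)\big)$ is a complemented copy of $\F^{s_\theta,\vartheta}(\M,V)$ inside $l_b^\vartheta(\bF^{s_\theta})$. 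Chasing the two descriptions through the same coretraction $\Rc$ and projection $\Rc\circ\Re$ then yields $l_b^\vartheta(\bF^{s_\theta})\doteq \Rc\big(\F^{s_\theta,\vartheta}(\M,V)\big) \doteq \Rc\big(\mathcal{F}(\F^{s_0,\vartheta},\F^{s_1,\vartheta})\big) \doteq \mathcal{F}\big(l_b^\vartheta(\bF^{s_0}),l_b^\vartheta(\bF^{s_1})\big)$, which is exactly the claim. The hypotheses $s_0,s_1,s_\theta\notin\N$ are needed precisely so that Proposition~\ref{S2: interpolation} is available.

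The main obstacle --- really the only non-formal point --- is the compatibility of the two coretraction descriptions: one must check that the closed complemented subspace of $l_b^\vartheta(\bF^{s_\theta})$ obtained as $\Rc\big(\F^{s_\theta,\vartheta}(\M,V)\big)$ coincides, as a subspace and with equivalent norms, with the one obtained by interpolating the subspaces $\Rc\big(\F^{s_i,\vartheta}(\M,V)\big)$ of $l_b^\vartheta(\bF^{s_i})$. This is automatic because the projection $P:=\Rc\circ\Re$ is one and the same bounded operator on all of $l_b^\vartheta(\bF^{s_0})$, $l_b^\vartheta(\bF^{s_1})$, $l_b^\vartheta(\bF^{s_\theta})$, so by exactness of the interpolation functor $P$ is bounded on $\mathcal{F}\big(l_b^\vartheta(\bF^{s_0}),l_b^\vartheta(\bF^{s_1})\big)$ with range the interpolation space of the ranges; identifying that range via $\Re$ with $\mathcal{F}(\F^{s_0,\vartheta},\F^{s_1,\vartheta})\doteq\F^{s_\theta,\vartheta}(\M,V)$ and re-applying $\Rc$ closes the loop. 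One should also note in passing that $b$ is the same throughout (``$\infty,\uf$'' for $bc$, $\infty$ for $BC$), so no mismatch of the ambient sequence-space scale arises. Alternatively, one could give a direct proof paralleling \cite[Corollaries~12.2, 12.4]{AmaAr} on the Euclidean side, interpolating $\bF^{s}$ fibrewise and then passing to the weighted $l_\infty$-sum, but routing everything through the retraction is cleaner and reuses Propositions~\ref{S2: change of wgt-lq}--\ref{S2: interpolation} verbatim.
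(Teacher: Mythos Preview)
Your argument has a genuine gap: the retraction goes the wrong way for what you need. By Proposition~\ref{S2: retraction&coretraction}, $\Re$ is a retraction from $l_b^\vartheta(\bF^s)$ \emph{onto} $\F^{s,\vartheta}(\M,V)$, so the manifold space is the retract of the sequence space, not conversely. The standard retraction--coretraction theorem then lets you compute interpolation spaces of the \emph{retract} from those of the ambient couple, i.e.\ it lets you deduce $\mathcal{F}(\F^{s_0,\vartheta},\F^{s_1,\vartheta})$ from $\mathcal{F}(l_b^\vartheta(\bF^{s_0}),l_b^\vartheta(\bF^{s_1}))$, which is precisely the opposite of what you want. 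Your chain of equalities breaks at the very first step: $l_b^\vartheta(\bF^{s_\theta})\doteq \Rc\big(\F^{s_\theta,\vartheta}(\M,V)\big)$ is false, because $\Rc\circ\Re$ is only a projection onto a proper complemented subspace of $l_b^\vartheta(\bF^{s_\theta})$ (a generic element $(u_\kappa)_\kappa$ need not be supported in the cubes $\Qk$ nor satisfy the compatibility $u_\kappa=\kf(\pk u)$ for any $u$). The last $\doteq$ in your chain fails for the same reason.

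The paper proceeds in the other logical direction: it first establishes the unweighted case $\vartheta=0$ directly on the sequence side by fibrewise interpolation of the Euclidean scales $\F^s(\Xk,E)$ together with the behaviour of $l_\infty$-sums under interpolation (citing \cite[Lemmas~11.10, 11.11]{AmaAr} and \cite[Proposition~I.2.3.2]{Ama95}), and then transfers to general $\vartheta$ via the isometric isomorphism $L_\vartheta$ of Proposition~\ref{S2: change of wgt-lq}, using again that interpolation functors commute with topological isomorphisms. In fact Proposition~\ref{S2: interpolation} (the manifold interpolation result you wanted to invoke) is itself proved in \cite{AmaAr} \emph{from} the sequence-space interpolation via the retraction, so your route would also be circular. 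The ``alternative'' you mention at the end is not an alternative: it is the actual argument.
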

\begin{proof}
The assertion with weight $\vartheta=0$ follows from \cite[Lemmas~11.10, 11.11]{AmaAr} and \cite[Proposition~I.2.3.2]{Ama95}. The remaining statement is a consequence of Proposition~\ref{S2: change of wgt-lq} and \cite[Proposition~I.2.3.2]{Ama95}.
\end{proof}


Let $V_j=V^{\sigma_j}_{\tau_j}:=\{T^{\sigma_j}_{\tau_j}\M,(\cdot|\cdot)_g\}$ with $j=1,2,3$ be $\bK$-valued tensor bundles on $\M$. Let $\oplus$ be the Whitney sum. By bundle multiplication from $V_1\times V_2$ into $V_3$, denoted by
\begin{center}
${\mathsf{m}}: V_1\oplus V_2\rightarrow V_3$,\hspace{1em} $(v_1,v_2)\mapsto {\mathsf{m}}(v_1,v_2)$,
\end{center}
we mean a smooth bounded section $\mathfrak{m}$ of $\Hom(V_1\otimes V_2,V_3)$, i.e., 
\begin{align}
\label{section 2: bundle multiplication}
\mathfrak{m}\in BC^{\infty}(\M, \text{Hom}(V_1\otimes V_2,V_3)), 
\end{align}
such that $\mathsf{m}(v_1,v_2):=\mathfrak{m}(v_1\otimes v_2)$. \eqref{section 2: bundle multiplication} implies that  for some $c>0$
\begin{center}
$|{\mathsf{m}}(v_1,v_2)|_g \leq c|v_1|_g |v_2|_g$,\hspace{1em} $v_i\in \Gamma(\M,V_i)$ with $i=1,2$.
\end{center}
Its point-wise extension from $\Gamma(\M,V_1\oplus V_2)$ into $\Gamma(\M,V_3)$ is defined by:
\begin{align*}
\mathsf{m}(v_1,v_2)(p):=\mathsf{m}(p)(v_1(p),v_2(p))
\end{align*}
for $v_i\in \Gamma(\M,V_i)$ and $p\in\M$. We still denote it by ${\mathsf{m}}$. We can  prove the following point-wise multiplier theorems for function spaces over singular manifolds.

\begin{prop}
\label{S2: pointwise multiplication}
Let $k\in\Nz$. Assume that the tensor bundles $V_j=V^{\sigma_j}_{\tau_j}:=\{T^{\sigma_j}_{\tau_j}\M,(\cdot|\cdot)_g\}$ with $j=1,2,3$ satisfy
\begin{align}
\label{section 2: ptm-condition}
\sigma_3-\tau_3=\sigma_1+\sigma_2-\tau_1-\tau_2.
\end{align}
Suppose that $\mathsf{m}:V_1\oplus V_2\rightarrow V_3$ is a bundle multiplication, and $\vartheta_3=\vartheta_1+\vartheta_2$. Then 
$$\F^{s,\vartheta_1}(\M,V_1)\times\F^{s,\vartheta_2}(\M,V_2)\rightarrow \F^{s,\vartheta_3}(\M,V_3),\quad [(v_1,v_2)\mapsto \mathsf{m}(v_1,v_2)]$$
 is a bilinear and continuous map.
\end{prop}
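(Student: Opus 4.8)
The plan is to reduce the claimed bilinear estimate on the manifold to the corresponding, well-understood point-wise multiplier theorem on Euclidean space (or the half-space) via the retraction/coretraction system of Proposition~\ref{S2: retraction&coretraction}. First I would localize: given $v_1\in\F^{s,\vartheta_1}(\M,V_1)$ and $v_2\in\F^{s,\vartheta_2}(\M,V_2)$, push them down to the families $(\Rck v_1)_\kappa\in l^{\vartheta_1}_b(\boldsymbol{\F}^s)$ and $(\Rck v_2)_\kappa\in l^{\vartheta_2}_b(\boldsymbol{\F}^s)$ (using $E=E^{\sigma_j}_{\tau_j}$ for each bundle), and localize the bundle multiplication itself by transporting $\mathfrak{m}$ through the charts, i.e.\ consider $\mathsf{m}_\kappa:=\kf\mathfrak{m}$ acting on the matrix-valued representatives. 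By hypothesis \eqref{section 2: bundle multiplication}, $\mathfrak{m}\in BC^\infty(\M,\mathrm{Hom}(V_1\otimes V_2,V_3))$, and by property (P3) the chart pull-backs of $|\cdot|_g$ are comparable to the Euclidean norm up to powers of $\rho_\kappa$; the $\rho$-weights in $\mathfrak{m}$'s localization are precisely accounted for by the balance condition \eqref{section 2: ptm-condition}, so that $\mathsf{m}_\kappa$ is a $BC^\infty(\Xk)$-section of the appropriate $\mathrm{Hom}$-bundle with norms bounded \emph{uniformly} in $\kappa$ (here one invokes (S3)--(S4) and (P2) to see that all the metric contractions $g_{(i)(\tilde i)}$, $g^{(j)(\tilde j)}$ entering the definition of $\mathsf{m}$ carry the right $\rho_\kappa$-powers and have uniformly bounded $BC^k$-norms after rescaling).

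Next I would invoke the Euclidean point-wise multiplier theorem: multiplication by a fixed $BC^\infty$-matrix-valued function is bounded on $\F^s(\Xk,E)$, and more to the point, the bilinear map $(u_1,u_2)\mapsto \mathsf{m}_\kappa(u_1,u_2)$ is bounded from $\F^s(\Xk,E_1)\times\F^s(\Xk,E_2)$ into $\F^s(\Xk,E_3)$ with a bound depending only on $\|\mathsf{m}_\kappa\|_{BC^{k+1}}$ (for $k<s<k+1$) — this is standard for $BC^s$ and $bc^s$ on $\R^m$ and $\H$, and is available in \cite{AmaAr}. Because these bounds are uniform in $\kappa$ by the previous paragraph, the localized bilinear map sends $l^{\vartheta_1}_b(\boldsymbol{\F}^s)\times l^{\vartheta_2}_b(\boldsymbol{\F}^s)$ into $l^{\vartheta_1+\vartheta_2}_b(\boldsymbol{\F}^s)=l^{\vartheta_3}_b(\boldsymbol{\F}^s)$ continuously: the weight bookkeeping is exactly $\rho_\kappa^{\vartheta_1}\cdot\rho_\kappa^{\vartheta_2}=\rho_\kappa^{\vartheta_3}$, which is where $\vartheta_3=\vartheta_1+\vartheta_2$ enters. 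Finally I would reassemble: using that $\Re$ is a retraction onto $\F^{s,\vartheta_3}(\M,V_3)$ with coretraction $\Rc$, and that $\Rc$ maps $\F^{s,\vartheta_j}(\M,V_j)$ boundedly into $l^{\vartheta_j}_b(\boldsymbol{\F}^s)$, the composite $\Re\circ(\text{localized }\mathsf{m})\circ(\Rc\times\Rc)$ is the desired bounded bilinear map, and one checks it agrees with the point-wise extension of $\mathsf{m}$ because $\pk^2$ is a partition of unity and $\zeta_\kappa\equiv1$ on $\supp(\kf\pk)$, so the localization/globalization does not distort the fiberwise product.

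One technical wrinkle worth isolating: the diagonal-support structure. When one writes $\Re((\mathsf{m}_\kappa(\Rck v_1,\Rck v_2))_\kappa)$ one must verify this equals $\sum_\kappa \pk^2\,\mathsf{m}(v_1,v_2)$ rather than a sum of mixed cross terms $\pk\pi_\eta$; this is ensured because $\Rck v_i=\psk^*(\pk v_i)$ already carries the factor $\pk$, and $\mathsf{m}_\kappa$ is the chart representative of a \emph{fiberwise} bilinear map, so applying it commutes with multiplication by the scalar cutoff $\kf\pk$. The genuinely substantive step is the \emph{uniformity in $\kappa$} of the Euclidean multiplier norms — one must show $\|\kf\mathfrak{m}\|_{BC^{k+1}(\Xk)}\le c$ independently of $\kappa$, which requires combining (R2)/(S2) (uniform control of transition maps), (P2)--(P3) (uniform control of the metric and its derivatives after $\rho_\kappa$-rescaling), and the definition of a bundle multiplication as a \emph{globally} $BC^\infty$-bounded $\mathrm{Hom}$-section. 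I expect this uniform-boundedness verification, together with the careful $\rho_\kappa$-power accounting forced by \eqref{section 2: ptm-condition}, to be the main obstacle; the rest is a routine transfer through the retraction system already set up in Propositions~\ref{S2: change of wgt-lq} and \ref{S2: retraction&coretraction}.
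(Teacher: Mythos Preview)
Your proposal is correct and is precisely the argument underlying the cited result: the paper's own proof is the single line ``The statement follows from \cite[Theorem~13.5]{AmaAr},'' and what you have sketched --- localize via $\Rc$, use the uniform-in-$\kappa$ Euclidean pointwise multiplier theorem together with the $\rho_\kappa$-power bookkeeping coming from (P2)--(P3) and \eqref{section 2: ptm-condition}, then reassemble via $\Re$ --- is exactly how that theorem is proved in \cite{AmaAr}. In other words, you have unpacked the reference rather than taken a different route.
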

\begin{proof}
The statement follows from \cite[Theorem~13.5]{AmaAr}.
\end{proof}

\begin{prop}
\label{S2: change of wgt}
$$f_{\vartheta}:[u\mapsto \rho^{\vartheta}u] \in \Lis(\F^{s,\vartheta^\prime+\vartheta}(\M,V),\F^{s,\vartheta^\prime}(\M,V)),\quad (f_\vartheta)^{-1}=f_{-\vartheta}.$$
\end{prop}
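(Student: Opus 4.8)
The plan is to reduce the statement to the corresponding fact at the level of sequence spaces via the retraction/coretraction system, exactly as was done for Proposition~\ref{S2: change of wgt-lq} versus Proposition~\ref{S2: interpolation-l_b}. Concretely, I would first observe that the map $f_\vartheta:[u\mapsto \rho^\vartheta u]$ is clearly linear, and that $f_\vartheta\circ f_{-\vartheta}=\id$ and $f_{-\vartheta}\circ f_\vartheta=\id$ pointwise on tensor fields, since $\rho$ is a strictly positive scalar function. Hence the only content is that $f_\vartheta$ maps $\F^{s,\vartheta^\prime+\vartheta}(\M,V)$ boundedly into $\F^{s,\vartheta^\prime}(\M,V)$ and that its inverse $f_{-\vartheta}$ does the analogous thing with the roles of $\vartheta^\prime$ and $\vartheta^\prime+\vartheta$ swapped; both halves are of the same form, so it suffices to prove boundedness of $f_\vartheta:\F^{s,\vartheta^\prime+\vartheta}(\M,V)\to\F^{s,\vartheta^\prime}(\M,V)$ for arbitrary real $\vartheta,\vartheta^\prime$.

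For the boundedness, I would use Proposition~\ref{S2: retraction&coretraction}: $\Re$ is a retraction from $l_b^\vartheta(\bF^s)$ onto $\F^{s,\vartheta}(\M,V)$ with coretraction $\Rc$. The key identity to establish is a commutation relation between the pointwise multiplication by $\rho^\vartheta$ on $\M$ and the multiplication operator $L_\vartheta$ on the sequence spaces, namely that $\Rc\circ f_\vartheta$ and $L_\vartheta\circ\Rc$ agree up to operators with the bounded mapping properties we already have. More precisely, writing out $\Rck(\rho^\vartheta u)=\psk^\ast(\pk\rho^\vartheta u)=(\kf\rho)^\vartheta\,\psk^\ast(\pk u)=(\kf\rho)^\vartheta(\Rck u)$, one sees that $\Rc\circ f_\vartheta$ differs from $L_\vartheta\circ\Rc$ only by the factor $(\kf\rho/\rho_\kappa)^\vartheta$ in each chart. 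By (S3) and (S4) in the definition of a singularity datum, $\kf\rho/\rho_\kappa$ and all its derivatives are bounded on $\Qk$ uniformly in $\kappa$ (and bounded away from $0$), so pointwise multiplication by $(\kf\rho/\rho_\kappa)^\vartheta$ — together with its chain-rule derivatives, which involve only $\rho^{-1}\kf\rho$-type quantities controlled by the same bounds — is a bounded self-map of $l_b^{\vartheta^\prime}(\bF^s)$ (this is the scalar special case of a pointwise multiplier estimate, and can either be verified directly from the seminorm definitions or quoted from the multiplier results behind Proposition~\ref{S2: pointwise multiplication}). Composing, $f_\vartheta=\Re\circ(\text{bounded})\circ L_\vartheta\circ\Rc$ up to the retraction bookkeeping, and since $L_\vartheta\in\Lis(l_b^{\vartheta^\prime+\vartheta}(\bF^s),l_b^{\vartheta^\prime}(\bF^s))$ by Proposition~\ref{S2: change of wgt-lq}, boundedness of $f_\vartheta:\F^{s,\vartheta^\prime+\vartheta}(\M,V)\to\F^{s,\vartheta^\prime}(\M,V)$ follows.

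I expect the main obstacle to be the clean verification of the claim that multiplication by $(\kf\rho/\rho_\kappa)^\vartheta$ is a uniformly bounded multiplier on the weighted little-H\"older sequence spaces $l^{\vartheta^\prime}_{\infty,\uf}(\bF^s)$ for non-integer $s$ and for negative or non-integer powers $\vartheta$. The integer-order and $BC$ cases are immediate from (S3)–(S4) plus the Leibniz rule, but for fractional $s$ one must check that the H\"older seminorm $[\partial^\alpha((\kf\rho/\rho_\kappa)^\vartheta u_\kappa)]^\delta_{s-[s],\infty}$ is controlled, uniformly in $\kappa$, by the corresponding norm of $u_\kappa$, and that the ``little'' condition (the $\delta\to0$ vanishing) is preserved — this is exactly the kind of statement covered by Amann's multiplier theorem \cite[Theorem~13.5]{AmaAr} restricted to scalar multipliers, and the cheapest route is to invoke it rather than redo the estimate. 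A secondary, purely cosmetic, point is handling the retraction bookkeeping carefully: since $\Re\circ\Rc\neq\id$ in general, one writes $f_\vartheta = f_\vartheta\circ\Re\circ\Rc$ and pushes $f_\vartheta$ through $\Re$ using the chartwise identity above, which is routine. Alternatively, and perhaps most economically, one can bypass the sequence spaces entirely for integer $s$ by noting $f_\vartheta$ is an isometry-up-to-constants directly from the definition of $\|\cdot\|_{k,\infty;\vartheta}$ together with (S4) and the Leibniz rule applied to $\nabla^i(\rho^\vartheta u)$, and then extend to fractional $s$ by the interpolation identity \eqref{S2.2: def-BC^s} and Proposition~\ref{S2: interpolation}; I would likely present whichever of these two routes is shorter given what has already been set up.
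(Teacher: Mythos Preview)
Your proposal is correct and follows essentially the same route as the paper: use (S3)--(S4) to see that $(\zeta\,\kf\rho^\vartheta/\rho_\kappa^\vartheta)_\kappa$ lies in $\bigcap_k l_\infty(\boldsymbol{BC}^k)$, hence acts as a bounded multiplier on the sequence spaces, and then chain $\|\rho^\vartheta u\|\le C\|\Rc(\rho^\vartheta u)\|=\|\boldsymbol{\rho}\,L_\vartheta\Rc u\|\le C\|\Rc u\|\le C\|u\|$ via Proposition~\ref{S2: retraction&coretraction} and Proposition~\ref{S2: change of wgt-lq}. The only cosmetic difference is in the $bc$ case: where you propose invoking the multiplier theorem directly on $l_{\infty,\uf}^{\vartheta'}(\boldsymbol{bc}^s)$ (or alternatively interpolating from integer orders), the paper first proves the $BC$ estimate and then passes to $bc$ by a density argument---approximating $u\in bc^{s,\vartheta'+\vartheta}$ by $u_n\in BC^{\infty,\vartheta'+\vartheta}$ and observing that $\rho^\vartheta u_n\in BC^{\infty,\vartheta'}$ converges to $\rho^\vartheta u$ in the $BC^{s,\vartheta'}$ norm.
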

\begin{proof}
By (S3) and (S4), we infer that $\boldsymbol{\rho}:=(\zeta\frac{\kf\rho^\vartheta}{\rho_\kappa^\vartheta})_{\kappa}\in \bigcap_k l_\infty(\boldsymbol{BC}^k)$, where $\zeta$ is defined in (L2). Then it follows from the point-wise multiplication results in \cite[Appendix~A2]{Ama01} and \cite[Corollary~2.8.2]{Trib83} that for $\bu=(u_\kappa)_\kappa$ and any $s\geq 0$
\begin{align*}
[\bu\mapsto (\zeta\frac{\kf\rho^\vartheta}{\rho_\kappa^\vartheta}u_\kappa)_\kappa] \in \L(l_\infty^{\vartheta^\prime}(\boldsymbol{BC}^s)).
\end{align*}
Given $u\in BC^{s,\vartheta^\prime}(\M,V)$,
\begin{align*}
\|\rho^\vartheta u\|_{s,\infty;\vartheta^\prime}&= 
\|\Re\Rc\rho^\vartheta u\|_{s,\infty;\vartheta^\prime}
\leq C\|\Rc \rho^\vartheta u\|_{l_\infty^{\vartheta^\prime}(\boldsymbol{BC}^s)}\\
&= \|\boldsymbol{\rho} L_{\vartheta}\Rc u\|_{l_\infty^{\vartheta^\prime}(\boldsymbol{BC}^s)}
\leq  C\|\boldsymbol{\rho}\|_{l_\infty(\boldsymbol{BC}^k)}\|\Rc u\|_{l_\infty^{\vartheta^\prime+\vartheta}(\boldsymbol{BC}^s)}\\
&\leq C(\rho,\vartheta,k)\|u\|_{s,\infty;\vartheta^\prime+\vartheta}.
\end{align*}
Now the open mapping theorem implies that the asserted result for $\F=BC$. Given any $u\in bc^{s,\vartheta^\prime+\vartheta}(\M,V)$, then there exists $(u_n)_n \in BC^{\infty, \vartheta^\prime+\vartheta}(\M,V)$ such that $u_n\to u$ in $BC^{s, \vartheta^\prime+\vartheta}(\M,V)$. We already have 
$$\| \rho^\vartheta u\|_{s,\infty; \vartheta^\prime} \leq C \|u\|_{s,\infty; \vartheta^\prime+\vartheta} ,$$
and $(\rho^\vartheta u_n)_n \in BC^{\infty, \vartheta^\prime}(\M,V)$. By the conclusion for $\F^s=BC^s$, we infer that as $n\to\infty$
$$\| \rho^\vartheta (u-u_n)\|_{s,\infty; \vartheta^\prime} \leq C \|u-u_n\|_{s,\infty; \vartheta^\prime+\vartheta} \to 0 .$$
We have established the asserted result for weighted {\em little H\"older} spaces in view of the  definition~\eqref{S2.2: def-bc^s}. 
\end{proof}

\begin{prop}
\label{S2: nabla}
For any $\sigma,\tau\in\Nz$ and $\vartheta\in\R$,
$$\nabla\in \L(\F^{s,\vartheta}(\M,V^\sigma_\tau), \F^{s-1,\vartheta}(\M,V^\sigma_{\tau+1})).$$
\end{prop}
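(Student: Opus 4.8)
The plan is to transfer the statement to the Euclidean model spaces via the retraction/coretraction system of Proposition~\ref{S2: retraction&coretraction} and reduce everything to the elementary fact that partial differentiation maps $\F^s(\X,E)$ continuously into $\F^{s-1}(\X,E')$. First I would recall that by Proposition~\ref{S2: retraction&coretraction}, $\Re$ is a retraction from $l_b^\vartheta(\bF^s)$ onto $\F^{s,\vartheta}(\M,V^\sigma_\tau)$ with $\Rc$ a coretraction, and likewise at the target level $\Re$ is a retraction from $l_b^\vartheta(\bF^{s-1})$ onto $\F^{s-1,\vartheta}(\M,V^\sigma_{\tau+1})$ (for the target tensor bundle the matrix dimension changes from $m^{\sigma+\tau}$ to $m^{\sigma+\tau+1}$, but the Euclidean spaces $\bF^{s-1}$ are defined accordingly). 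Hence it suffices to produce a bounded linear operator $\boldsymbol{\nabla}$ on the Euclidean side making the obvious square commute, i.e.\ $\nabla = \Re\circ\boldsymbol{\nabla}\circ\Rc$ up to a bounded correction, and then $\nabla = \Re\,\boldsymbol{\nabla}\,\Rc\circ(\Re\,\Rc)$ is bounded as a composition of bounded maps, using $\Re\Rc=\mathrm{id}$.

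The core computation is the local expression of the covariant derivative. In the chart $\kappa$, one has $\kf(\nabla a) = \partial(\kf a) + \Gamma_\kappa \star \kf a$, where $\Gamma_\kappa$ denotes the Christoffel symbols of $\kf g$ and $\star$ is a bundle multiplication (contraction). The first term involves only Euclidean partial differentiation, which maps $bc^s(\Xk,E)\to bc^{s-1}(\Xk,E')$ and $BC^s\to BC^{s-1}$ with norm bounds independent of $\kappa$; combined with Proposition~\ref{S2: change of wgt-lq} this handles the principal part on $l_b^\vartheta(\bF^s)$. The second term is a zeroth-order multiplier: by properties (P1)-(P3) and (S1)-(S4) — exactly as in the proof of Proposition~\ref{S2: change of wgt} — the rescaled Christoffel symbols $\rho_\kappa\,\zeta\,\kf\Gamma_\kappa$ lie in $\bigcap_k l_\infty(\boldsymbol{BC}^k)$ (the extra factor $\rho_\kappa$ compensating the weight shift forced by $\nabla$ raising the tensor degree by one in the covariant slot), so pointwise multiplication by them is bounded $l_b^\vartheta(\bF^s)\to l_b^\vartheta(\bF^{s-1})$ by \cite[Theorem~13.5]{AmaAr} or the multiplier results in \cite[Appendix~A2]{Ama01} and \cite[Corollary~2.8.2]{Trib83}. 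Absorbing the localization functions $\pk,\zeta_\kappa$ (whose chart norms are uniformly bounded by (L1)-(L3)) causes no trouble. Finally, for the $bc$ case one checks that $\boldsymbol{\nabla}$ maps the closure $l^\vartheta_{\infty,\uf}(\boldsymbol{bc}^s)$ into $l^\vartheta_{\infty,\uf}(\boldsymbol{bc}^{s-1})$, which follows from density of $BC^{\infty,\vartheta}$ in $bc^{s,\vartheta}$ and continuity on the $BC$ scale, exactly the approximation argument used at the end of the proof of Proposition~\ref{S2: change of wgt}.

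I expect the main obstacle to be purely bookkeeping: getting the weight exponents right so that $\nabla$ genuinely lands in $\F^{s-1,\vartheta}(\M,V^\sigma_{\tau+1})$ \emph{with the same weight} $\vartheta$. The subtlety is that in the definition of $\|\cdot\|_{k,\infty;\vartheta}$ the weight attached to $\nabla^i u$ is $\rho^{\vartheta+i+\tau-\sigma}$, so replacing $u$ by $\nabla u$ (which has $\tau$ replaced by $\tau+1$ and each $\nabla^i(\nabla u)=\nabla^{i+1}u$) shifts the bookkeeping index by exactly one, and one must verify that the Christoffel term, which in the chart carries a factor of $\rho_\kappa^{-1}$ from (P1)-(P3) applied to $\kf g$, exactly matches this shift. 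Once the exponent accounting is checked on the $BC^{k,\vartheta}$ scale for integer $k$, the non-integer case is immediate from the interpolation identity \eqref{S2.2: def-BC^s} together with Proposition~\ref{S2: interpolation}, since $\nabla$ is then an interpolated restriction of bounded maps on the endpoint integer spaces. No genuinely hard analysis is involved; the result is essentially a naturality statement for the retraction system, and the proof is short once the local formula for $\nabla$ is in hand.
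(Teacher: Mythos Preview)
Your retraction-based strategy is correct in principle, but it is considerably more elaborate than what the paper does, and in fact more elaborate than what you yourself notice near the end of your proposal. The paper's proof is two sentences: for integer $s=k$ the statement is \emph{literally} the definition, and for non-integer $s$ it cites \cite[Theorem~16.1]{AmaAr}. The reason the integer case is free is exactly the bookkeeping you spell out in your third paragraph: since $\|\nabla u\|_{k-1,\infty;\vartheta}$ on $V^\sigma_{\tau+1}$ equals $\max_{0\le i\le k-1}\|\rho^{\vartheta+(i+1)+\tau-\sigma}|\nabla^{i+1}u|_g\|_\infty\le\|u\|_{k,\infty;\vartheta}$, there is nothing to prove---no charts, no Christoffel symbols, no retraction system. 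Your entire localization machinery, with the decomposition $\kf(\nabla a)=\partial(\kf a)+\Gamma_\kappa\star\kf a$ and the multiplier estimates on the Christoffel symbols, is therefore unnecessary for the integer case; the intrinsic definition of the norm via iterated covariant derivatives already does all the work.

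For non-integer $s$, your interpolation argument (bounded on $bc^{k,\vartheta}$ and $bc^{k+1,\vartheta}$, hence on $BC^{s,\vartheta}=(bc^{k,\vartheta},bc^{k+1,\vartheta})_{s-k,\infty}$, then pass to $bc^{s,\vartheta}$ by density) is a valid self-contained substitute for the paper's citation and is arguably what \cite[Theorem~16.1]{AmaAr} itself does. So the approach you outline is not wrong, but the retraction/Christoffel route is a detour: you could delete the first two paragraphs of your proposal and keep only the direct observation in the third, together with interpolation and a density step, and you would have a complete proof shorter than what you wrote. One minor caution: your claimed scaling $\rho_\kappa\,\zeta\,\kf\Gamma_\kappa\in\bigcap_k l_\infty(\boldsymbol{BC}^k)$ deserves a second look---from (P2) the product $g^{kl}\partial g_{ij}$ balances to $O(1)$ without an extra $\rho_\kappa$---but since this whole computation is avoidable, the point is moot.
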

\begin{proof}
The case $s\in\N$ is immediate from the definition of the weighted function spaces. The non-integer case follows from \cite[Theorem~16.1]{AmaAr}.
\end{proof}

Let $\hat{g}=g/\rho^2$. 
Then $(\M,\hat{g})$ is a uniformly regular Riemannian manifold. We denote the corresponding tensor fields by $\hat{V}=\hat{V}^\sigma_\tau$. 
The definitions of the corresponding weighted function spaces $\F^{s^\prime,\vartheta^\prime}(\M,\hat{V})$ do not depend on the choice of $\vartheta^\prime$ in this case.
We denote the unweighted spaces by $\F^{s^\prime}(\M,\hat{V})$. 
The reader may refer to \cite{Shao13} for the precise definitions for these unweighted spaces on uniformly regular Rimannian manifolds. 
\begin{prop}
\label{S2.3: unweighted spaces}
For $\F\in \{bc, BC, W_p, \mathring{W}_p\}$, it holds that
$$\F^s(\M,\hat{V})\doteq \F^{s,-1/p}(\M,V) $$
\end{prop}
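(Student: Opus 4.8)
The equivalence $\F^s(\M,\hat V)\doteq \F^{s,-1/p}(\M,V)$ should follow from the retraction/coretraction picture of Proposition~\ref{S2: retraction&coretraction}, once one understands how the two metrics $g$ and $\hat g=g/\rho^2$ interact with the localization charts. The plan is to fix $\kappa\in\K$ and compare the two local norms. By (P3) applied to $(\M,g)$ we have $\kf(|a|_g)\sim \rho_\kappa^{\sigma-\tau}|\kf a|_{g_m}$, while applying the same property to the uniformly regular manifold $(\M,\hat g)$ — for which $\hat\rho\sim{\bf 1}_\M$, so $\hat\rho_\kappa\sim 1$ — gives $\kf(|a|_{\hat g})\sim |\kf a|_{g_m}$. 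Hence pointwise $\kf(|a|_g)\sim \rho_\kappa^{\sigma-\tau}\,\kf(|a|_{\hat g})$, and since by (S4) $\rho\sim\rho_\kappa$ on $\Ok$ this reads $|a|_g\sim \rho^{\sigma-\tau}|a|_{\hat g}$ on each patch with constants independent of $\kappa$. A similar comparison must be carried out for the covariant derivatives: writing $\hat\nabla$ for the Levi-Civita connection of $\hat g$, one checks that $\hat\nabla=\nabla+\Gamma$ where the difference tensor $\Gamma$ is built from $\rho^{-1}\nabla\rho$ and hence, by (S3)–(S4), contributes only lower-order terms of the correct weight; iterating, $\rho^{\vartheta+i+\tau-\sigma}|\nabla^i u|_g$ and the analogous $\hat g$-quantity with $\vartheta=-1/p$ control each other.

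Concretely, I would first record the chart-level statement: for $\F\in\{bc,BC,W_p,\mathring W_p\}$ and any $\kappa$, the map $\Rck$ built from $g$ and the corresponding map $\hat\Rck$ built from $\hat g$ differ only by the scalar factor $\rho_\kappa^{\vartheta+\tau-\sigma}$ at the level of the pushed-forward Euclidean fields, because the only $\rho_\kappa$-dependence in the definition of the weighted local norm comes through (P3) and the definition of $\|\cdot\|_{k,\infty;\vartheta}$. Taking $\vartheta=-1/p$ and matching the exponent, one sees that $\Rc$ maps $\F^{s,-1/p}(\M,V)$ isomorphically onto the same $l_b^\vartheta(\bF^s)$-type sequence space that $\hat\Rc$ associates to $\F^s(\M,\hat V)$; since $\hat\rho_\kappa\sim 1$, the weight on the sequence space is immaterial. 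Then Proposition~\ref{S2: retraction&coretraction} (applied twice, once for each metric) together with Proposition~\ref{S2: change of wgt-lq} yields that both manifold spaces are retracts of the \emph{same} model space via compatible retractions, hence are equal up to equivalent norms. For the $W_p$ and $\mathring W_p$ cases one invokes the analogue of Proposition~\ref{S2: retraction&coretraction} for Sobolev spaces (from \cite{AmaAr}), which has the same structure.

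The main obstacle I expect is the bookkeeping for the connection: showing that replacing $\nabla$ by $\hat\nabla$ changes $\nabla^i u$ only by terms that are, after multiplication by the weight $\rho^{\vartheta+i+\tau-\sigma}$, bounded in $\F^{s}$ with constants uniform in $\kappa$. This requires the Koszul-type formula for the difference of the two Christoffel symbols, the estimate $\|\kf(\rho^{-1}\nabla\rho)\|_{k,\infty}\le c(k)$ which follows from (S3), and an induction on $i$; the non-integer values of $s$ are then handled by the interpolation identities of Proposition~\ref{S2: interpolation}. A secondary technical point is that one must verify the little-H\"older closure is preserved, i.e. that $BC^{\infty,-1/p}(\M,V)$ and $BC^\infty(\M,\hat V)$ have the same closure inside the common model space; this is automatic once the norm equivalence at each finite level $k$ is established, because the isomorphism is bicontinuous and carries one dense subspace onto the other. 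I would present the argument in the order: (1) metric comparison via (P3); (2) connection comparison and the uniform chart estimate; (3) identification of model spaces and conclusion via Propositions~\ref{S2: retraction&coretraction} and \ref{S2: change of wgt-lq}; (4) the remark that the $\vartheta'$-independence on $(\M,\hat g)$ makes the weight choice $-1/p$ on the left and $0$ on the sequence space consistent.
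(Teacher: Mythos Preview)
Your overall approach---identifying both spaces as retracts of the same sequence model via the retraction/coretraction system, then invoking Proposition~\ref{S2: retraction&coretraction} (and its Sobolev analogue from \cite{AmaAr}) together with the uniformly-regular version from \cite{Shao13}---is exactly what the paper does; the paper's proof is the one-line citation ``The assertion follows from Proposition~\ref{S2: retraction&coretraction} and \cite[Propositions~2.1, 2.2]{Shao13}.''

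One clarification is worth making, since it would simplify your write-up considerably. The local maps $\Rck u=\kf(\pk u)$ and $\Rek v_\kappa=\pk\kb v_\kappa$ are \emph{coordinate} operations: they do not see the metric or the connection at all, so in fact $\Rck=\hat\Rck$ and $\Rek=\hat\Rek$ literally. What changes between the two settings is only which sequence-space norm makes $(\Rc,\Re)$ a retraction pair---$l_b^{\vartheta}(\bF^s)$ for $(\M,g;\rho)$ with weight $\vartheta$, versus the unweighted $l_b(\bF^s)$ for $(\M,\hat g)$---and the cited retraction theorems already encode the requisite chart-level estimates (including all effects of $\nabla$ versus $\partial$). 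Consequently your entire step~(2), the Koszul comparison of $\nabla$ and $\hat\nabla$ and the induction on derivative order, is unnecessary: once you quote the two retraction theorems, the manifold spaces coincide because they are complemented subspaces of the same $l_b$-space cut out by the same projector $\Rc\Re$. Your direct intrinsic-norm argument via connection comparison would also work, but it is a longer parallel route rather than an ingredient of the retraction proof.
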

\begin{proof}
The assertion follows from Proposition~\ref{S2: retraction&coretraction} and \cite[Propositions~2.1, 2.2]{Shao13}.
\end{proof}

\section{\bf Continuous maximal regularity}

\subsection{\bf Continuous maximal regularity on singular manifolds}
Throughout the rest of this paper, we always assume that $(\M,g;\rho)$ is a singular manifold without boundary.

Following \cite{ShaoSim13}, letting	$l\in\Nz$, $\cA:C^{\infty}({\M},V)\rightarrow \Gamma({\M},V)$ is called a linear differential operator of order $l$ on $\M$ if we can find $\bfa=(a_r)_r\in \prod_{r=0}^l \Gamma(\M, V^{\sigma+\tau+r}_{\tau+\sigma})$ such that
\begin{align}
\label{S3: globally-defined diff-op}
\cA=\cA(\bfa):=\sum\limits_{r=0}^l \ev(a_r,\nabla^r \cdot).
\end{align}
Here complete contraction 
\begin{align*}
\ev:\Gamma(\M, V^{\sigma+\tau+r}_{\tau+\sigma}\times V^{\sigma}_{\tau+r})\rightarrow \Gamma(\M, V^\sigma_\tau): (a,b)\mapsto \ev(a,b)
\end{align*}
is defined as follows. Let $(i_1),(i_2),(i_3)\in\J^{\sigma}$, $(j_1),(j_2),(j_3)\in\J^{\tau}$ and $(k_1),(k_2)\in\J^{r}$. Then
\begin{align*}
&\ev(a,b)(\p):=\ev(a^{(i_3;j_1;k_1)}_{(j_3;i_1)} \frac{\partial}{\partial x^{(i_3)}}\otimes \frac{\partial}{\partial x^{(j_1)}}\otimes\frac{\partial}{\partial x^{(k_1)}}\otimes dx^{(j_3)}\otimes dx^{(i_1)},\\
&\quad  b^{(i_2)}_{(j_2;k_2)} \frac{\partial}{\partial x^{(i_2)}}\otimes dx^{(j_2)}\otimes dx^{(k_2)})(\p)\\
&=a^{(i_3;j_1;k_1)}_{(j_3;i_1)} b^{(i_1)}_{(j_1;k_1)}\frac{\partial}{\partial x^{(i_3)}}\otimes dx^{(j_3)}(\p),
\end{align*}
in every local chart and for $p\in\M$. The index $(i_3;j_1;k_1)$ is defined by
\begin{align*}
(i_3;j_1;k_1)=(i_{3,1},\cdots,i_{3,\sigma};j_{1,1},\cdots,j_{1,\tau};k_{1,1},\cdots,k_{1,r}).
\end{align*}
The other indices are defined in a similar way. \cite[Lemma~14.2]{AmaAr} implies that $\ev$ is a bundle multiplication. Making use of \cite[formula~(3.18)]{Ama13}, one can check that for any $l$-th order linear differential operator so defined, in every local chart $(\Ok,\vpk)$ there exists some linear differential operator
\begin{align}
\label{S3: local exp of diff-op}
\cA_{\kappa}(x,\partial):=\sum\limits_{|\alpha|\leq{l}}a^{\kappa}_{\alpha}(x)\partial^{\alpha}, \hspace*{.5em}\text{ with }a^{\kappa}_{\alpha}\in \L(E)^{\Qk},
\end{align}
called the local representation of $\cA$ in $(\Ok,\vpk)$, such that for any $u\in C^{\infty}({\M},V)$
$$
\kf(\cA u)=\cA_{\kappa}(\kf u).
$$

\begin{prop}
\label{S3: diff-op}
Let $s\geq 0$ and $\vartheta \in\R$.
Suppose that $\cA=\cA(\bfa)$ with $\bfa=(a_r)_r\in \prod_{r=0}^l bc^s(\M,V^{\sigma+\tau+r}_{\tau+\sigma})$.  Then 
\begin{align*}
\cA\in\L(\F^{s+l,\vartheta}(\M,V),\F^{s,\vartheta}(\M,V)). 
\end{align*}
\end{prop}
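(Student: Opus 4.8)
The plan is to reduce the statement to the corresponding mapping property in Euclidean space via the retraction/coretraction system of Proposition~\ref{S2: retraction&coretraction}, exactly in the spirit of \cite[Section~3]{ShaoSim13}. First I would write $\cA = \sum_{r=0}^{l}\ev(a_r,\nabla^r\cdot)$ and treat each summand $\ev(a_r,\nabla^r\cdot)$ separately, then sum. For a fixed $r$, the operator $\nabla^r$ maps $\F^{s+l,\vartheta}(\M,V^\sigma_\tau)$ into $\F^{s+l-r,\vartheta}(\M,V^\sigma_{\tau+r})$ continuously: this follows by iterating Proposition~\ref{S2: nabla} $r$ times (and $s+l-r\ge s\ge 0$, so no negative-order spaces appear). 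Next, $\ev(a_r,\cdot)$ is a bundle multiplication by \cite[Lemma~14.2]{AmaAr}, with $a_r\in bc^s(\M,V^{\sigma+\tau+r}_{\tau+\sigma}) = bc^{s,0}(\M,V^{\sigma+\tau+r}_{\tau+\sigma})$; the index bookkeeping $\sigma_3-\tau_3 = (\sigma+\tau+r)-(\tau+\sigma) + \sigma - (\tau+r) = \sigma-\tau$ checks the covariant/contravariant balance~\eqref{section 2: ptm-condition}, and the weights add as $0 + \vartheta = \vartheta$. Hence by the pointwise multiplier theorem, Proposition~\ref{S2: pointwise multiplication},
$$
\ev(a_r,\cdot)\in\L\bigl(\F^{s,\vartheta}(\M,V^\sigma_{\tau+r}),\F^{s,\vartheta}(\M,V^\sigma_\tau)\bigr).
$$
Composing, $\ev(a_r,\nabla^r\cdot)\in\L(\F^{s+l,\vartheta}(\M,V),\F^{s,\vartheta}(\M,V))$ for each $r$, and summing over $0\le r\le l$ gives $\cA\in\L(\F^{s+l,\vartheta}(\M,V),\F^{s,\vartheta}(\M,V))$.

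One subtlety to address: Proposition~\ref{S2: pointwise multiplication} and Proposition~\ref{S2: nabla} (for non-integer order) are stated for the H\"older scales with the understanding that one may need to interpolate; for $s\in\N$ the $\nabla$ mapping property is immediate from the definition of the weighted spaces, while for $s\notin\N$ one invokes \cite[Theorem~16.1]{AmaAr} as cited in the proof of Proposition~\ref{S2: nabla}. Since the target regularity $s$ here is allowed to be any nonnegative real, I would simply appeal to those two propositions as black boxes, noting that both already handle the full range $s\ge 0$ for $\F\in\{bc,BC\}$.

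The main obstacle — really the only place requiring care — is making sure the multiplication $\ev(a_r,\cdot)$ lands in the correct weighted space with the correct tensor type, i.e.\ verifying the compatibility conditions~\eqref{section 2: ptm-condition} and $\vartheta_3=\vartheta_1+\vartheta_2$ in Proposition~\ref{S2: pointwise multiplication} for the specific bundles $V_1 = V^{\sigma+\tau+r}_{\tau+\sigma}$, $V_2 = V^\sigma_{\tau+r}$, $V_3 = V^\sigma_\tau$, with $\vartheta_1 = 0$, $\vartheta_2 = \vartheta$. This is a routine but slightly fiddly index count; everything else is a one-line composition of already-established continuous linear maps. I would therefore keep the write-up short: state the three ingredients (iterated $\nabla$, $\ev$ is a bundle multiplication, pointwise multiplier theorem), check the index/weight compatibility once, compose, and sum.
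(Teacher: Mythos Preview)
Your proof is correct and follows exactly the paper's approach: the paper's proof is the single line ``The assertion is a direct consequence of Propositions~\ref{S2: pointwise multiplication} and \ref{S2: nabla},'' and you have simply spelled out how those two propositions are composed (iterated $\nabla$, embedding $\F^{s+l-r,\vartheta}\hookrightarrow\F^{s,\vartheta}$, then the bundle-multiplication bound). The only remark is that your opening sentence about reducing to Euclidean space via the retraction/coretraction system is a red herring here---you never actually use it, nor does the paper; Propositions~\ref{S2: pointwise multiplication} and~\ref{S2: nabla} already encapsulate that passage, so the proof is purely a composition at the level of the intrinsic weighted spaces.
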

\begin{proof}
The assertion is a direct consequence of Propositions~\ref{S2: pointwise multiplication} and \ref{S2: nabla}.
\end{proof}

Given any angle $\phi\in [0,\pi]$, set 
$$\Sigma_\phi:=\{z\in\C: |{\rm arg}z|\leq \phi \}\cup\{0\}.$$
A linear operator $\cA:=\cA(\bfa)$ of order $l$ is said to be {\em normally $\rho$-elliptic} if there exists some constant $\Ce>0$ such that for every pair $(\p,\xi)\in \M\times\Gamma(\M, T^\ast M)$ with $|\xi(\p)|_{g^*(\p)}\neq 0$ for all $\p\in\M$, the principal symbol 
$$\hat{\sigma}\cA^\pi(\p,\xi(\p)):=\ev(a_l,(-i\xi)^{\otimes l})(\p)\in \L(T_\p\M^{\otimes\sigma}\otimes T_\p^*\M^{\otimes\tau})$$ 
satisfies 
\begin{align}
\label{S3: nor rho-ept-1}
S:=\Sigma_{\pi/2} \subset \rho(-\hat{\sigma}\cA^\pi(\p,\xi(\p))),
\end{align}
and
\begin{align}
\label{S3: nor rho-ept-2}
(\rho^l(\p)|\xi(\p)|_{g^*(\p)}^l +|\mu|) \|(\mu + \hat{\sigma}\cA^\pi(\p,\xi(\p)))^{-1}\|_{\L(T_p\M^{\otimes\sigma}\otimes T_p^*\M^{\otimes\tau})} \leq \Ce ,\quad \mu\in S. 
\end{align}
The constant $\Ce$ is called the \emph{$\rho$-ellipticity constant} of $\cA$. To the best of the author's knowledge, this ellipticity condition is the first one formulated for degenerate or singular elliptic operators acting on tensor fields.

We can also introduce a stronger version of the ellipticity condition for $\cA$. $\cA$ is called 
{\em uniformly strongly} $\rho$-{\em elliptic} if there exists some constant $C_e>0$ such that for all $(\p,\xi,\eta)\in \M\times\Gamma(\M, T^\ast M)\times \Gamma(\M,T^\sigma_\tau \M)$  the principal symbol satisfies 
$$
\hat{\sigma}\cA^\pi(\p,\xi(\p))(\eta(\p))\geq C_e \rho^l(\p) |\eta(\p)|_{g(\p)}^2 |\xi(\p)|^l_{g^*(\p)}. 
$$
Here $\hat{\sigma}\cA^\pi(\p,\xi(\p))(\eta(\p)):=(\ev(a_l,\eta\otimes(-i\xi)^{\otimes l})(\p)|\eta(\p))_{g(\p)}$. 
In \cite{Ama13b}, H.~Amann has used the {\em uniformly strong $\rho$-ellipticity} condition to establish an $L_p$-maximal regularity theory for second order differential operators acting on scalar functions. 

We can readily check that a {\em uniformly strongly} $\rho$-{\em elliptic} operator $\cA$ must be {\em normally $\rho$-elliptic}.
If $\cA$ is of odd order, then by replacing $\xi$ with $-\xi$ in \eqref{S3: nor rho-ept-1}, it is easy to see that $\rho(\hat{\sigma}\cA^\pi(\p,\xi(\p)))=\C$. This is a contradiction. Therefore, every {\em normally $\rho$-elliptic} operator is of even order.

We call a linear operator $\cA:=\cA(\bfa)$ $s$-{\em regular} if 
\begin{align}
\label{S3: rho-reg}
a_r\in bc^s(\M, V^{\sigma+\tau+r}_{\tau+\sigma}), \quad r=0,1,\cdots,l.
\end{align}
This reveals the existence of some constant $\Ca$ such that 
\begin{align}
\label{S3: rho-reg-2}
\|a_r\|_{s,\infty} \leq \Ca, \quad r=0,1,\cdots,l.
\end{align}
We consider how \eqref{S3: rho-reg} affects the behavior of the localizations $\cA_{\kappa}$. 
Given any linear differential operator $\cA$ of order $2l$,
by an analogy of Proposition~\ref{S2: retraction&coretraction}, we infer that
$$(\kf a_r)_\kappa \in l_{\infty,\uf}(\boldsymbol{bc}^s(\Qk, E^{\sigma+\tau+r}_{\tau+\sigma})), \quad r=0,1,\cdots,2l,$$
or equivalently
$$(\kf (a_r)^{(i)}_{(j)})_\kappa \in l_{\infty,\uf}(\boldsymbol{bc}^s(\Qk)), \quad (i)\in\J^{\sigma+\tau+r},\quad (j)\in \J^{\tau+\sigma},\quad r=0,1,\cdots,2l.$$ 
By \cite[formula~(3.18)]{Ama13}, the coefficients of $\cA_{\kappa}$, i.e., $a^\kappa_\alpha$, are linear combinations of the products of $(a_r)^{(i)}_{(j)}$ and possibly the derivatives of the Christoffel symbols of the metric $g$. Thus \cite[formula~(3.19)]{Ama13} shows that
\begin{align}
\label{S3: rho-reg-loc}
(a^\kappa_\alpha)_\kappa \in l_{\infty,\uf}(\boldsymbol{bc}^s(\Qk,\L(E))) ,\quad |\alpha|\leq 2l.
\end{align}

Given any Banach space $X$, a linear differential operator of order $l$ 
$$\cA:=\cA(x,\partial):=\sum\limits_{|\alpha|\leq l} a_{\alpha}(x)\partial^{\alpha}$$ 
defined on an open subset $U\subset\R^m$ with $a_\alpha: U \rightarrow \L(X)$ is said to be {\em normally elliptic} if its principal symbol
$\hat{\sigma}\cA^\pi(x,\xi):=\sum\limits_{|\alpha|=l}a_{\alpha}(x)(-i\xi)^{\alpha}$
satisfies 
$$S:=\Sigma_{\pi/2}\subset \rho(-\hat{\sigma}\cA^\pi(x,\xi))$$ 
and there exists some $\Ce>0$ such that
\begin{align}
\label{S3:: nor ept-loc}
(|\xi|^l+|\mu|)\| (\mu+ \hat{\sigma}\cA^{\pi}(x,\xi))^{-1}\|_{\L(X)}\leq \Ce, \quad \mu\in S,
\end{align}
for all $(x,\xi)\in U\times \dot{\R}^m$, where $\dot{\R}^m:=\R^m\setminus\{0\}$. The constant $\Ce$ is called the \emph{ellipticity constant} of $\cA$. As above, one can check that  $\cA$ must be of even order.

\begin{prop}
\label{S3: equiv-ellip}
A linear differential operator $\cA:=\cA(\bfa)$ of order $2l$ is normally $\rho$-elliptic iff all its local realizations 
$$
\cA_{\kappa}(x,\partial)=\sum\limits_{|\alpha|\leq{2l}}a^{\kappa}_{\alpha}(x)\partial^{\alpha}
$$
are  normally elliptic on $\Qk$ with a uniform {\em ellipticity constant} $\Ce$ in condition~\eqref{S3:: nor ept-loc}.
\end{prop}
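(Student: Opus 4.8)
The plan is to translate the coordinate-free $\rho$-ellipticity condition into its local chart expressions and compare the two sets of quantitative bounds, using properties (P1)--(P3) of singular manifolds together with the compatibility relation $\kf(\cA u)=\cA_\kappa(\kf u)$. The key observation is that the principal symbol $\hat{\sigma}\cA^\pi(\p,\xi(\p))$ and the local principal symbol $\hat{\sigma}\cA_\kappa^\pi(x,\eta)$ are intertwined by the pushforward $\psk$: if $\p=\psk(x)$ and $\xi(\p)$ corresponds in the chart to the covector $\eta\in\R^m$, then $\kf\bigl(\hat{\sigma}\cA^\pi(\p,\xi)\bigr)=\hat{\sigma}\cA_\kappa^\pi(x,\eta)$ (as elements of $\L(E^\sigma_\tau)$), which is exactly the top-order part of formula~(3.18) in \cite{Ama13}. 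So the resolvent set and the resolvent norms of $\hat{\sigma}\cA^\pi(\p,\xi)$ on $\L(T_\p\M^{\otimes\sigma}\otimes T_\p^*\M^{\otimes\tau})$ agree with those of $\hat{\sigma}\cA_\kappa^\pi(x,\eta)$ on $\L(E)$ \emph{up to the discrepancy between the fiber norm $|\cdot|_{g(\p)}$ and the Euclidean norm on $E$}, which by (P3) is a factor $\rho_\kappa^{\sigma-\tau}$, uniformly controlled by (S3)--(S4).

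First I would fix notation: for $\kappa\in\K$ and $x\in\Qk$ write $\p:=\psk(x)$, and note that as $\xi$ ranges over cotangent fields with $|\xi(\p)|_{g^*(\p)}\neq 0$, the local representatives $\eta:=(\psk^*\xi)(x)$ range over all of $\dot{\R}^m$; moreover by (P1) we have $|\xi(\p)|_{g^*(\p)}\sim \rho_\kappa^{-1}|\eta|$ with constants independent of $\kappa$. Next, using the identification of $\L(T_\p\M^{\otimes\sigma}\otimes T_\p^*\M^{\otimes\tau})$ with $\L(E)$ induced by the chart — which distorts operator norms by a bounded factor depending only on the equivalence constants in (P3), hence uniform in $\kappa$ — I would show that condition~\eqref{S3: nor rho-ept-1} for $\cA$ is equivalent to $\Sigma_{\pi/2}\subset\rho(-\hat{\sigma}\cA_\kappa^\pi(x,\eta))$ for all $x\in\Qk$, $\eta\in\dot{\R}^m$, and all $\kappa$ simultaneously. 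Then I would rewrite the quantitative bound~\eqref{S3: nor rho-ept-2}: substituting $\rho^l(\p)|\xi(\p)|_{g^*(\p)}^l\sim \rho_\kappa^l\cdot\rho_\kappa^{-l}|\eta|^l=|\eta|^l$ (the weights cancel — this is the whole point of the $\rho$-scaling in the ellipticity condition) and absorbing the bounded norm-distortion factor into the constant, one obtains precisely $(|\eta|^l+|\mu|)\|(\mu+\hat{\sigma}\cA_\kappa^\pi(x,\eta))^{-1}\|_{\L(E)}\leq \Ce'$ for $\mu\in\Sigma_{\pi/2}$, with $\Ce'$ depending only on $\Ce$ and the structural constants of $\M$ — i.e., uniform in $\kappa$. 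This gives the "only if" direction; the "if" direction is obtained by running the same chain of equivalences backwards, using that the distortion factors are bounded \emph{below} as well as above, and that $\rho_\kappa\leq c\,\rho(\p)\leq c^2\rho_\kappa$ on $\Ok$ lets one pass between $\rho_\kappa$ and the pointwise value $\rho(\p)$ freely.

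The main obstacle I anticipate is bookkeeping the norm-comparison on the operator spaces cleanly: $\hat{\sigma}\cA^\pi(\p,\xi)$ acts on the fiber $T_\p\M^{\otimes\sigma}\otimes T_\p^*\M^{\otimes\tau}$ equipped with $|\cdot|_{g(\p)}$, while $\hat{\sigma}\cA_\kappa^\pi(x,\eta)$ acts on $E$ with the Euclidean norm, and the linear isomorphism relating them (built from the chart differential $d\psk(x)$ and its inverse transpose) scales the domain and codomain norms by powers of $\rho_\kappa$ according to (P3). One must check that these powers are the same on domain and codomain — which is forced by the tensor type $(\sigma,\tau)$ being preserved by the principal symbol — so that the \emph{operator} norm transforms only by a \emph{bounded} factor (the ratio of the equivalence constants in (P3)), with no residual power of $\rho$. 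I would isolate this as a short lemma, or simply cite \cite[formula~(3.18), (3.19)]{Ama13} and (P2)--(P3), observing that these already encode exactly the statement that the top-order coefficients $a^\kappa_\alpha$ of $\cA_\kappa$ are, fiberwise, the components of $\rho_\kappa^{?}$-rescaled versions of $a_l$ with uniformly bounded rescaling. Once this norm-comparison is in hand, the proof is just the substitution $|\xi|_{g^*}\leftrightarrow\rho_\kappa^{-1}|\eta|$ carried through \eqref{S3: nor rho-ept-1}--\eqref{S3: nor rho-ept-2}, with the weight cancellation making the two constants $\Ce$ and $\Ce'$ comparable.
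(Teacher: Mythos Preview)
Your proposal is correct and follows essentially the same route as the paper: both identify $\hat\sigma\cA_\kappa^\pi(x,\eta)$ with $\kf\hat\sigma\cA^\pi(\p,\xi)$, use (P1)/(S4) to get $\rho^{2l}(\p)|\xi|_{g^*}^{2l}\sim|\eta|^{2l}$ (the weight cancellation you highlight), and use (P3) to see that the $\rho_\kappa^{\tau-\sigma}$ factors on domain and codomain cancel in the operator norm, then reverse the chain for the converse. Aside from writing $l$ where the order is $2l$, your outline matches the paper's computation \eqref{S3.1: eq1}--\eqref{S3.1: eq4} step for step.
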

\begin{proof}
We first assume that $\cA:=\cA(\bfa)$ is {\em normally $\rho$-elliptic}. In every local chart $(\Ok,\vpk)$, by definition we have
$$
\hat{\sigma}\cA_\kappa^\pi(x,\xi)=\sum\limits_{|\alpha|=2l}a^\kappa_\alpha(x) (-i\xi)^\alpha= \kf \ev(a_{2l},(-i\xi^\M)^{\otimes 2l})(\p)
$$
with $(x,\xi)\in\Qk\times\dot{\R}^m$ and $\p=\psk(x)$. 
Here $\xi^\M$ is a $1$-form satisfying $\xi^\M|_{\Ok}=\xi_j dx^j$.
By \cite[formula~(3.2)]{ShaoSim13} and \eqref{S3: nor rho-ept-1}, we conclude $S:=\Sigma_{\pi/2}\subset \rho(-\hat{\sigma}\cA_\kappa^\pi(x,\xi))$. For every $\mu\in S$, $\eta,\varsigma\in E^\sigma_\tau$ with $\varsigma=(\mu+\hat{\sigma}\cA_\kappa^\pi(x,\xi))\eta$, and $\xi\in\dot{\R}^m$, one computes
\begin{align}
\notag &\quad  (|\xi|^{2l}_{g_m} +|\mu|) | (\mu + \hat{\sigma}\cA^{\pi}(x,\xi))^{-1} \varsigma |_{g_m}=(|\xi|^{2l}_{g_m} +|\mu|) | \eta |_{g_m}\\
\label{S3.1: eq1}
&\leq C \rho_\kappa^{\tau-\sigma}(C^\prime \rho^{2l}(\p) |\xi^\M (\p)|^{2l}_{g^*(\p)} +|\mu|) | d\psk (x)\eta |_{g(\p)}\\
\label{S3.1: eq2}
& \leq M  \rho_\kappa^{\tau-\sigma}(\rho^{2l}(\p) |\xi^\M (\p)|^{2l}_{g^*(\p)} +|\mu|) | d\psk (x)\eta |_{g(\p)}\\
\label{S3.1: eq3}
& \leq M \Ce  \rho_\kappa^{\tau-\sigma}| (\mu + \ev(a_{2l},(-i\xi^\M)^{\otimes 2l})(\p)) d\psk (x)\eta |_{g(\p)}\\
\label{S3.1: eq4}
&\leq M^\prime \Ce  \rho_\kappa^{\tau-\sigma} \rho_\kappa^{\sigma-\tau}|\kf (\mu + \ev(a_{2l},(-i\xi^\M)^{\otimes 2l})(\p)) d\psk (x)\eta |_{g_m}\\
\notag &= M^\prime \Ce  | (\mu +\hat{\sigma}\cA_\kappa^\pi(x,\xi)) \eta |_{g_m} =M^\prime\Ce |\varsigma|_{g_m}.
\end{align}
In \eqref{S3.1: eq1}, we have adopted (S4) and (P3). In \eqref{S3.1: eq2}, the constant $M=C\max\{C^\prime,1\}$ is independent of the choices of $\kappa$ and $x$.  \eqref{S3.1: eq3}  follows from \eqref{S3: nor rho-ept-2}, and \eqref{S3.1: eq4} is a direct consequence of (P3).

The ``if" part follows by a similar argument.
\end{proof}

\begin{prop}
\label{S3: Prop-res-est}
Let $s\in \R_+\setminus\N$ and $\vartheta\in \R$. Suppose that $\cA=\cA(\bfa)$ is a $2l$-th order linear differential operator, which is normally $\rho$-elliptic and $s$-regular with bounds $\Ce$ and $\Ca$ defined in \eqref{S3: nor rho-ept-2} and \eqref{S3: rho-reg-2}. Then  there exist $\omega=\omega(\Ce,\Ca)$, $\phi=\phi(\Ce,\Ca)>\pi/2$ and $\cE=\cE(\Ce,\Ca)$ such that $S=\omega+ \Sigma_\phi\subset \rho(-\cA)$ and 
$$|\mu|^{1-i} \|(\mu+\cA)^{-1}\|_{\L(\F^{s,\vartheta}(\M,V), \F^{s+2li,\vartheta}(\M,V))}\leq \cE, \quad \mu\in S,\quad i=0,1.$$
\end{prop}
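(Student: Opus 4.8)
The plan is to transfer the resolvent estimate from the well-understood Euclidean setting to the manifold by means of the retraction/coretraction system $(\Re,\Rc)$ from Proposition~\ref{S2: retraction&coretraction}, combined with the freezing-of-coefficients technique. First I would invoke Proposition~\ref{S3: equiv-ellip} to see that the localizations $\cA_\kappa(x,\partial)=\sum_{|\alpha|\le 2l}a^\kappa_\alpha(x)\partial^\alpha$ are normally elliptic on $\Qk$ with a \emph{uniform} ellipticity constant $\Ce$, and I would recall from \eqref{S3: rho-reg-loc} that the coefficients satisfy $(a^\kappa_\alpha)_\kappa\in l_{\infty,\uf}(\boldsymbol{bc}^s(\Qk,\L(E)))$, so they enjoy uniform $bc^s$-bounds in terms of $\Ca$. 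For a parameter-dependent normally elliptic operator with uniformly continuous, uniformly bounded $bc^s$-coefficients on $\R^m$ (or $\H$), the classical $bc^s$/little-H\"older elliptic theory — e.g. the results of Amann and the references in \cite{ShaoSim13} — provides $\omega_0$, $\phi_0>\pi/2$ and $\cE_0$, depending only on $\Ce$, $\Ca$, $m$, $s$, $2l$, such that $\omega_0+\Sigma_{\phi_0}\subset\rho(-\cA_\kappa)$ uniformly in $\kappa$, with
\begin{align*}
|\mu|^{1-i}\,\|(\mu+\cA_\kappa)^{-1}\|_{\L(bc^s(\Xk,E),bc^{s+2li}(\Xk,E))}\le \cE_0,\qquad \mu\in\omega_0+\Sigma_{\phi_0},\ i=0,1.
\end{align*}

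Next I would patch these local resolvents together. The idea is to build an approximate global resolvent $R_\mu:=\Re\,(\mu+\cA_\kappa)^{-1}\,\Rc$ acting on $\F^{s,\vartheta}(\M,V)$; by Propositions~\ref{S2: retraction&coretraction} and \ref{S2: interpolation-l_b} together with the uniform local estimates above, $R_\mu$ satisfies the desired mapping bounds with a constant controlled by $\cE_0$ and the norms of $\Re,\Rc$. It remains to show $R_\mu$ is a genuine right (and left) inverse up to a small error. Applying $\mu+\cA$ to $R_\mu$ and using the intertwining relation $\kf(\cA u)=\cA_\kappa(\kf u)$, the commutator between $\cA$ and the localization functions $\pk$ (and between $\cA_\kappa$ and the cut-offs $\zeta_\kappa$) produces lower-order terms; crucially, each such commutator is a differential operator of order $\le 2l-1$ whose coefficients involve derivatives of $\pk,\zeta_\kappa$, which by (L3) are uniformly bounded, and — by the $s$-regularity and the structure of $\cA_\kappa$ — these error operators map $\F^{s+2l,\vartheta}$ to $\F^{s,\vartheta}$ with a norm that, after inserting the large-$\mu$ resolvent gain, becomes $\le \tfrac12$ for $|\mu|$ large. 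Hence $(\mu+\cA)R_\mu=\id+B_\mu$ with $\|B_\mu\|\le\tfrac12$, so $(\mu+\cA)R_\mu(\id+B_\mu)^{-1}=\id$; a symmetric argument on the left gives that $\mu+\cA$ is invertible, and $(\mu+\cA)^{-1}=R_\mu(\id+B_\mu)^{-1}$ inherits the estimate, after enlarging $\omega$ and possibly shrinking $\phi$ to some $\phi>\pi/2$, with final constant $\cE=\cE(\Ce,\Ca)$.

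Throughout, the reduction to the unweighted case is harmless: by Proposition~\ref{S2: change of wgt}, $f_\vartheta=[u\mapsto\rho^\vartheta u]$ is an isomorphism $\F^{s,\vartheta}(\M,V)\to\F^{s,0}(\M,V)$ intertwining $\cA$ with a conjugated operator $f_\vartheta\cA f_{-\vartheta}$ that is again normally $\rho$-elliptic with the same principal symbol and $s$-regular with comparable bounds (the conjugation only alters lower-order coefficients by factors that are uniformly $bc^s$ by (S3)–(S4)), so it suffices to treat $\vartheta=0$; equivalently one simply carries the weight $\vartheta$ through the $l_\infty^\vartheta$-spaces via Proposition~\ref{S2: change of wgt-lq}, which is the more economical route since Proposition~\ref{S2: retraction&coretraction} is already stated with the weight in place.

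The main obstacle I anticipate is the perturbation/commutator step: one must verify that the lower-order error operators arising from localization, together with the difference between $\cA_\kappa$ and its principal part (i.e. the genuinely lower-order coefficients $a^\kappa_\alpha$ with $|\alpha|<2l$), are controlled uniformly in $\kappa$ and are of strictly lower order, so that the gain $|\mu|^{-1/(2l)}$ in the parameter-dependent norms beats them for $|\mu|$ large. This requires the uniform-in-$\kappa$ bookkeeping provided by \eqref{S3: rho-reg-loc}, the localization estimates (L3), and the uniform boundedness of the retraction/coretraction from Proposition~\ref{S2: retraction&coretraction}; the interpolation identities of Proposition~\ref{S2: interpolation-l_b} are what let one estimate the intermediate-order norms $\|\cdot\|_{\F^{s+2li-1,\vartheta}}$ by a small multiple of $\|\cdot\|_{\F^{s+2l,\vartheta}}$ plus a large multiple of $\|\cdot\|_{\F^{s,\vartheta}}$, which is exactly the trade-off needed to absorb the errors.
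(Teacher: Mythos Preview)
Your proposal is correct and follows essentially the same route as the paper: localize via Proposition~\ref{S3: equiv-ellip} and \eqref{S3: rho-reg-loc}, invoke the Euclidean little-H\"older resolvent theory (the paper cites \cite[Theorems~4.1, 4.2, Remark~4.6]{Ama01}) to get uniform-in-$\kappa$ estimates, assemble a parametrix through $(\Re,\Rc)$, and absorb the lower-order commutator errors using the interpolation inequality from Proposition~\ref{S2: interpolation-l_b} for $|\mu|$ large. One small technical point you gloss over but the paper makes explicit: the local operators $\cA_\kappa$ are only defined on $\Qk$, so before invoking the $\R^m$-theory one extends the coefficients to all of $\Xk$ via a smooth retraction $h:\R^m\to\Q$, obtaining operators $\Ak$ with $\bar a^\kappa_\alpha=a^\kappa_\alpha\circ h$; this is harmless but needed to make $(\mu+\cA_\kappa)^{-1}$ act on $bc^s(\Xk,E)$.
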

\begin{proof}
To economize notation, we set 
$$
E_0:=\F^{s,\vartheta},\quad E_\theta:=\F^{s+2l-1,\vartheta} ,\quad E_1:=\F^{s+2l,\vartheta},
$$
and
$$
l_b^\vartheta(\bE_0):=l_b^{\vartheta}(\bF^s),\quad  l_b^\vartheta(\bE_\theta):=l_b^{\vartheta}(\bF^{s+2l-1}),\quad l_b^\vartheta(\bE_1):=l_b^{\vartheta}(\bF^{s+2l}),
$$
where $b=``\infty,\uf"$ for $\F=bc$, and $b=\infty$ for $\F=BC$.

(i) Define $h:\R^m\rightarrow \Q$: $x\mapsto\zeta(x)x$. Here $\zeta$ is defined in (L2). It is easy to see that $h\in BC^\infty(\R^m,\Q)$. Let
$$
\Ak(x,\partial):=\sum\limits_{|\alpha|\leq{2l}}\bar{a}_{\alpha}^{\kappa}(x)\partial^{\alpha}:=\sum\limits_{|\alpha|\leq{2l}}(a_{\alpha}^{\kappa}\circ h)(x)\partial^{\alpha}.
$$
It is not hard to check with the assistance of \eqref{S3: rho-reg-loc}  that the coefficients $(\bar{a}_\alpha^\kappa)_\kappa$ satisfy
$$
( \bar{a}^\kappa_\alpha)_\kappa \in l_{\infty,\uf}(\boldsymbol{bc}^s(\L(E))) ,\quad |\alpha|\leq 2l,
$$
and by Proposition~\ref{S3: equiv-ellip} that 
$\Ak$ are all {\em normally elliptic} with a uniform {\em ellipticity constant} for all $\kappa\in\K$.
In virtue of \cite[Theorems~4.1, 4.2 and Remark~4.6]{Ama01}, these two conditions imply the existence of some constants $\omega_0=\omega_0(\Ce,\Ca)$, $\phi=\phi(\Ce,\Ca)>\pi/2$ and $\cE=\cE(\Ce,\Ca)$ such that 
\begin{align}
\label{S3: res-Awkl}
S_0:=\omega_0 +\Sigma_\phi\subset \rho(-\Ak),\quad \kappa\in\K,
\end{align}
and
\begin{align}
\label{S3: res-est-Awkl}
|\mu|^{1-i} \|(\mu+\Ak)^{-1}\|_{\L(\F^s(E), \F^{s+2li}(E))}\leq \cE, \quad \mu\in S_0,\quad i=0,1, \quad \kappa\in\K.
\end{align}
Let $\bA:l_b^\vartheta(\bE_1)\rightarrow \bE$: $[(u_\kappa)_\kappa \mapsto (\Ak u_\kappa)_\kappa]$. 
First, it is not hard to verify by means of the point-wise multiplication results in \cite[Appendix~A2]{Ama01} that
\begin{equation}
\label{S3.1: bA is bdd-BC}
\bA\in\L(l_b^\vartheta(\bE_1), l_\infty^\vartheta(\bE_0)).
\end{equation}
By Proposition~\ref{S2: interpolation-l_b} and the well-known interpolation theory, for any $s<t\notin\N$, 
$$l^{\vartheta}_{\infty,\uf}(\boldsymbol{bc}^{t+2l})\overset{d}{\hookrightarrow} l^{\vartheta}_{\infty,\uf}(\boldsymbol{bc}^{s+2l}). $$
Hence for any $\bu\in l^\vartheta_{\infty,\uf}(\boldsymbol{bc}^{s+2l})$, we can choose 
$$(\boldsymbol{u}_n)_n:=((u_{n,\kappa})_\kappa)_n\subset l^{\vartheta}_{\infty,\uf}(\boldsymbol{bc}^{t+2l})$$ converging to $\bu$ in $l^{\vartheta}_\infty(\boldsymbol{bc}^{s+2l})$. Since $s$ is arbitrary, we see that the estimate \eqref{S3.1: bA is bdd-BC} still holds when $s$ is replaced by $t$, i.e., 
$$\bA\boldsymbol{u}_n\in l^{\vartheta}_\infty(\boldsymbol{bc}^t)\hookrightarrow l^{\vartheta}_{\infty,\uf}(\boldsymbol{bc}^s).$$
What is more, $\bA\boldsymbol{u}_n=: \boldsymbol{v}_n \to \bA\bu$ in the $l^{\vartheta}_\infty(\boldsymbol{BC}^s)$-norm. Since $l^{\vartheta}_{\infty,\uf}(\boldsymbol{bc}^{s+2l})$ is a Banach space, it yields
$
\bA\bu\in l_{\infty,\uf}^{\vartheta}(\boldsymbol{bc}^s). 
$
Therefore
\begin{equation}
\label{S3.1: bA is bdd}
\bA\in\L(l_b^\vartheta(\bE_1), l_b^\vartheta(\bE_0)).
\end{equation}
For any $\mu\in S_0$, it is easy to see that $\mu+\bA: \bF^{s+2l}\rightarrow l_\infty^\vartheta(\bE_0)$ is a bijective map. We write the inverse of $\mu+\bA$ as $(\mu+\bA)^{-1}$ and compute for $\bu:=(u_\kappa)_\kappa \in l_b^\vartheta(\bE_0)$
\begin{align}
\label{S3: res-est-bAw-1}
\notag\|(\mu+\bA)^{-1} \bu\|_{l_\infty^\vartheta(\boldsymbol{BC}^{s+2l})}&=\sup\limits_{\kappa\in\K} \rho_\kappa^\vartheta\| (\mu+\Ak)^{-1}  u_\kappa\|_{s+2l,\infty}\\
\notag &= \sup\limits_{\kappa\in\K} \| (\mu+\Ak)^{-1} \rho_\kappa^\vartheta u_\kappa\|_{s+2l,\infty}\\
&\leq \cE \sup\limits_{\kappa\in\K}  \|\rho_\kappa^\vartheta u_\kappa\|_{\F^s(E)}=\cE\|\bu\|_{l_b^\vartheta(\bE_0)}.
\end{align}
In the case $\F=bc$, \eqref{S3: res-est-bAw-1} only shows that for each $\bu\in l^\vartheta_{\infty,\uf}(\boldsymbol{bc}^s)$ and $\mu\in S$ $(\mu+\bA)^{-1}\bu\in l^\vartheta_\infty(\boldsymbol{BC}^{s+2l})$. It remains to prove $(\mu+\bA)^{-1}\bu\in l^\vartheta_{\infty,\uf}(\bE_1)$. This can be answered by a density argument as in the proof for \eqref{S3.1: bA is bdd}. 

Hence $S_0\subset \rho(-\bA)$. Similarly, one checks
\begin{align*}
|\mu|\|(\mu+\bA)^{-1} \|_{\L(l_b^\vartheta(\bE_0))}\leq \cE , \quad \mu\in S_0. 
\end{align*} 
\smallskip\\
(ii) Given any $u\in E_1(\M,V)$ and $\mu\in S$, one computes
\begin{align*}
&\quad[\Rck(\mu+\cA)-(\mu+\Ak)\Rck] u\\
&=\kf (\pk(\mu+\cA)u)-(\mu+\Ak)\kf(\pk u)\\
&=\kf \pk(\mu+\Ak)\kf u-(\mu+\Ak)\kf(\pk u) \\
&=\kf \pk\Ak\kf u-\Ak\kf(\pk u)\\
&=-\sum\limits_{|\alpha|\leq{2l}}\sum\limits_{0<\beta\leq\alpha}\binom{\alpha}{\beta}\bar{a}_{\alpha}^{\kappa}\partial^{\alpha-\beta}(\zeta\kf u)\partial^{\beta}(\kf\pk)=:\Bk u.
\end{align*}
Note that $\zeta\equiv 1$ on $\supp(\kf \pk)$ for all $\kappa\in\K$.
Define for any $u\in C^\infty(\M,V)$
$$\cB u:= (\Bk u)_\kappa.$$
Similar to the computation for \eqref{S3.1: bA is bdd}, we can easily check 
$$\cB\Re\in \L(l_b^\vartheta(\bE_\theta),l_b^\vartheta(\bE_0)). $$
By Proposition~\ref{S2: interpolation-l_b}, we have
$$l_b^\vartheta(\bE_\theta)\doteq (l_b^\vartheta(\bE_0),l_b^\vartheta(\bE_1))_\theta, $$
where either $(\cdot,\cdot)_{\theta}=(\cdot,\cdot)^0_{\theta,\infty}$ for $\F=bc$, or $(\cdot,\cdot)_{\theta}=(\cdot,\cdot)_{\theta,\infty}$ for $\F=BC$, and $\theta=1-1/(2l)$. 
\smallskip\\
It follows from interpolation theory and Proposition~\ref{S2: change of wgt-lq} that for every $\varepsilon>0$ there exists some positive constant $C(\varepsilon)$ such that for all $\bu\in l_b^\vartheta(\bE_1)$
$$
\|\cB\Re \bu\|_{l_b^\vartheta(\bE_0)} \leq \varepsilon \|\bu\|_{l_b^\vartheta(\bE_1)} + C(\varepsilon)\|\bu\|_{l_b^\vartheta(\bE_0)}
$$
Given any $\bu\in l_b^\vartheta(\bE_0)$ and  $\mu\in S_0$,
\begin{align*}
\|\cB\Re (\mu +\bA)^{-1}\bu\|_{l_b^\vartheta(\bE_0)}
\leq & \varepsilon \|(\mu +\bA)^{-1}\bu\|_{l_b^\vartheta(\bE_1)} + C(\varepsilon) \|(\mu +\bA)^{-1}\bu\|_{l_b^\vartheta(\bE_0)}\\
\leq & \cE(\varepsilon   + \frac{C(\varepsilon)}{|\mu|}) \|\bu\|_{l_b^\vartheta(\bE_0)}.
\end{align*}
Hence we can find some $\omega_1=\omega_1(\Ce,\Ca)\geq \omega_0$ such that for all $\mu\in S_1:=\omega_1 +\Sigma_\phi$
$$\|\cB\Re (\mu +\bA)^{-1}\|_{\L(l_b^\vartheta(\bE_0))} \leq 1/2, $$
which implies that $S_1 \subset \rho(-\bA-\cB\Re)$ and 
$$ \|(I +\cB\Re(\mu  +\bA)^{-1})^{-1}\|_{l_b^\vartheta(\bE_0)} \leq 2 .$$
Now we compute for any $\bu\in l_b^\vartheta(\bE_0)$ and $\mu\in S_1$
\begin{align*}
 |\mu| \|(\mu +\bA+\cB\Re)^{-1} \bu \|_{l_b^\vartheta(\bE_0)}
=& |\mu| \|(\mu  +\bA)^{-1} (I +\cB\Re(\mu  +\bA)^{-1})^{-1} \bu \|_{l_b^\vartheta(\bE_0)}\\
\leq & \cE \|(I +\cB\Re(\mu  +\bA)^{-1})^{-1} \bu \|_{l_b^\vartheta(\bE_0)}\\
\leq &  2\cE \|\bu\|_{l_b^\vartheta(\bE_0)},
\end{align*}
where $I=\id_{l_b^\vartheta(\bE_0)}$, 
and a similar computation yields
$$\|(\mu +\bA+\cB\Re)^{-1} \bu \|_{l_b^\vartheta(\bE_1)}\leq 2\cE \|\bu\|_{l_b^\vartheta(\bE_0)}.
$$
One readily checks 
$$\Rc (\mu+\cA)u =(\mu+\bA)\Rc u +B\Re\Rc u = (\mu +\bA+\cB\Re)\Rc u.$$
For $\mu\in S_1$, we immediately have
$$\Re(\mu +\bA+\cB\Re)^{-1}\Rc (\mu+\cA)=\Re (\mu +\bA+\cB\Re)^{-1} (\mu +\bA+\cB\Re)\Rc=\id_{E_1(\M,V)}. $$
Therefore, $\mu+\cA$ is injective for $\mu\in S_1$.

(iii) Given $u\in C^\infty(E):=C^\infty(\R^m, E)$, we define
$$\Ck u:=[(\mu +\cA)\Rek- \Rek (\mu+ \Ak)]u.$$
An easy computation shows that for each $u\in C^\infty(E)$
\begin{align*}
\kf \Ck u&= \sum\limits_{|\alpha|\leq 2l} \bar{a}^\kappa_\alpha \partial^\alpha (\kf \pk u) - \kf\pk (\sum\limits_{|\alpha|\leq 2l} \bar{a}^\kappa_\alpha \partial^\alpha u )\\
&= \sum\limits_{|\alpha|\leq 2l} \sum\limits_{0<\beta\leq \alpha} \binom{\alpha}{\beta} \bar{a}^\kappa_\alpha \partial^{\alpha-\beta}(\zeta u) \partial^\beta (\kf \pk).
\end{align*}
It is obvious that $\Ck\in \L(\F^{s+2l-1}(E),\F^s(\M,V))$. Moreover, with $\bu=(u_\kappa)_\kappa$, it is a simple matter to verify as for \eqref{S3.1: bA is bdd} that
$$[\bu \mapsto (\kf \Ck u_\kappa)_\kappa]\in \L(l_b^\vartheta(\bE_\theta), l_b^\vartheta(\bE_0)). $$
Define $\cC: l_b^\vartheta(\bE_\theta)\rightarrow E_1(\M,V)$: $[\bu \mapsto \sum\limits_\kappa \Ck u_\kappa]$. Then given any $\bu\in l_b^\vartheta(\bE_1)$
$$(\mu+\cA)\Re \bu= \Re(\mu+\bA)\bu + \Re\Rc\cC\bu=\Re(\mu+\bA+\Rc\cC)\bu. $$
It follows in an analogous way to the proof for Proposition~\ref{S2: retraction&coretraction} that 
$$[\bu\mapsto \sum\limits_\kappa \kb(\zeta u_\kappa)]\in \L(l_b^\vartheta(\bE_0), E_0(\M,V)).$$
In view of $\cC \bu= \sum\limits_\kappa \kb (\zeta\kf\Ck u_\kappa)$, we obtain
$$\cC \in \L(l_b^\vartheta(\bE_\theta),E_0(\M,V)) $$
and thus
$$\Rc\cC\in  \L(l_b^\vartheta(\bE_\theta), l_b^\vartheta(\bE_0)).$$
Now it is not hard to verify via an analogous computation as in (ii) that there exists some $\omega_2=\omega_2(\Ce,\Ca)\geq \omega_1$ such that $S_2:=\omega_2 +\Sigma_\phi\subset \rho(-\bA-\Rc\cC)$ and 
$$|\mu|^{1-i} \|(\mu+\bA+\Rc\cC)^{-1} \|_{\L(l_b^\vartheta(\bE_0),l_b^\vartheta(\bE_i))}\leq 2\cE, \quad \mu\in S_2,\quad i=0,1.$$
Then we have
$$(\mu+\cA)\Re(\mu+\bA+\Rc\cC)^{-1}\Rc=\Re(\mu+\bA+\Rc\cC)(\mu+\bA+\Rc\cC)^{-1}\Rc=\id_{E_0(\M,V)} .$$
Thus, $\mu+\cA$ is surjective for $\mu\in S_1$, and $\Re(\mu+\bA+\Rc\cC)^{-1}\Rc$ is a right inverse of $(\mu+\cA)$.  Furthermore,
\begin{align*}
&|\mu|^{1-i} \|(\mu+\cA)^{-1}\|_{\L(E_0(\M,V),E_i(\M,V))}\\
 =& |\mu|^{1-i} \|\Re(\mu+\bA+\Rc\cC)^{-1}\Rc\|_{\L(E_0(\M,V),E_i(\M,V))}\leq C\cE ,\quad\mu\in S_1,\quad i=0,1.
\end{align*}
This completes the proof
\end{proof}

Recall that an operator $A$ is said to belong to the class $\cH(E_1,E_0)$ for some densely embedded Banach couple $E_1\overset{d}{\hookrightarrow}E_0$, if $-A$ generates a strongly continuous analytic semigroup on $E_0$ with $dom(-A)=E_1$. By the well-known semigroup theory, Proposition~\ref{S3: Prop-res-est} immediately implies
\begin{theorem}
\label{S3: analytic semigroup}
Let $s\in\R_+\setminus\N$ and $\vartheta\in \R$. Suppose $\cA$ satisfies the conditions in Proposition~\ref{S3: Prop-res-est}. Then
$$
\cA\in\cH(bc^{s+2l,\vartheta}(\M,V),bc^{s,\vartheta}(\M,V)).
$$
\end{theorem}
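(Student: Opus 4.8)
The plan is to deduce Theorem~\ref{S3: analytic semigroup} directly from Proposition~\ref{S3: Prop-res-est} together with standard generation theory for analytic semigroups. First I would record the two structural facts that the proposition does not literally state but which are needed. The first is the dense embedding $bc^{s+2l,\vartheta}(\M,V)\overset{d}{\hookrightarrow}bc^{s,\vartheta}(\M,V)$: this follows from Proposition~\ref{S2: interpolation} (applied with the continuous interpolation functor, which guarantees density of the higher-order space in the lower-order one) combined with the observation that $bc^{s+2l,\vartheta}$ contains $BC^{\infty,\vartheta}(\M,V)$, which is dense in $bc^{s,\vartheta}$ by the very definition~\eqref{S2.2: def-bc^s}. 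The second is that $\cA$, regarded as an unbounded operator on $E_0:=bc^{s,\vartheta}(\M,V)$ with domain $E_1:=bc^{s+2l,\vartheta}(\M,V)$, is closed; this is immediate because $\cA\in\L(E_1,E_0)$ by Proposition~\ref{S3: diff-op} and $E_1$ is continuously embedded in $E_0$, so the graph norm of $\cA$ on $E_1$ is equivalent to the $E_1$-norm and $E_1$ is complete.

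Next I would invoke the resolvent estimate. Proposition~\ref{S3: Prop-res-est} provides $\omega$, an angle $\phi>\pi/2$, and a constant $\cE$ such that the sector $\omega+\Sigma_\phi$ lies in the resolvent set $\rho(-\cA)$ and
$$
|\mu|\,\|(\mu+\cA)^{-1}\|_{\L(E_0)}\le\cE,\qquad \mu\in\omega+\Sigma_\phi .
$$
Shifting the spectral parameter, write $\lambda=\mu-\omega$; then for $\lambda$ in a sector of half-angle $\phi>\pi/2$ around the positive real axis (minus a bounded set) one has the bound $\|\lambda(\lambda+(\cA+\omega))^{-1}\|\le M$ for a constant $M$ depending only on $\cE$ and $\omega$. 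This is exactly the sectoriality condition in the standard characterization of generators of analytic semigroups (see, e.g., the resolvent formulation in \cite{Ama95} or \cite{Lunar95}): an operator $B$ on a Banach space $E_0$ with dense domain $E_1$, closed, whose resolvent set contains a sector $\{\lambda:|\arg\lambda|<\phi\}$ minus a bounded set with $|\lambda|\,\|(\lambda+B)^{-1}\|$ bounded there, generates a strongly continuous analytic semigroup, i.e. $B\in\cH(E_1,E_0)$. Applying this with $B=\cA$ (the bounded perturbation by $\omega$ only shifts the sector and does not affect membership in $\cH$) yields $\cA\in\cH(bc^{s+2l,\vartheta}(\M,V),bc^{s,\vartheta}(\M,V))$.

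There is essentially no hard analytic step here: the entire content of the theorem has been pushed into Proposition~\ref{S3: Prop-res-est}, and what remains is the bookkeeping of translating a sectorial resolvent estimate into the semigroup-generation statement. The only point that requires a moment's care — and hence the place I would expect a referee to look — is verifying that the dense embedding $E_1\overset{d}{\hookrightarrow}E_0$ genuinely holds for the weighted little Hölder spaces, since density is part of the definition of the class $\cH(E_1,E_0)$ and is \emph{not} automatic for the (non-separable) big Hölder spaces $BC^{s,\vartheta}$; it is precisely the use of \emph{little} Hölder spaces, whose definition~\eqref{S2.2: def-bc^s} builds in the closure of $BC^{\infty,\vartheta}$, that rescues this. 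Once that is in place, the conclusion follows, so the proof is just: "Combine Proposition~\ref{S3: Prop-res-est} with the density of $bc^{s+2l,\vartheta}(\M,V)$ in $bc^{s,\vartheta}(\M,V)$ and the standard semigroup generation theorem."
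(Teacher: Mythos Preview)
Your proposal is correct and matches the paper's own argument, which consists of the single sentence ``By the well-known semigroup theory, Proposition~\ref{S3: Prop-res-est} immediately implies'' the result. In fact you supply more detail than the paper does, explicitly verifying the dense embedding $bc^{s+2l,\vartheta}\overset{d}{\hookrightarrow}bc^{s,\vartheta}$ and closedness of $\cA$, both of which the paper leaves implicit.
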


For some fixed interval $I=[0,T]$, $\gamma\in(0,1)$, and some Banach space $X$, we define
\begin{align*}
&BU\!C_{1-\gamma}(I,X):=\{u\in{C(\dot{I},X)};[t\mapsto{t^{1-\gamma}}u]\in C(\dot{I},X),\lim\limits_{t\to{0^+}} t^{1-\gamma}\|u(t)\|_X=0\},\\
& \|u\|_{C_{1-\gamma}}:=\sup_{t\in{\dot{I}}}{t^{1-\gamma}}\|u(t)\|_{X},
\end{align*}
and
$$BU\!C_{1-\gamma}^1(I,X):=\{u\in{C^1(\dot{I},X)}: u,\dot{u}\in{BU\!C_{1-\gamma}(I,X)}\}.$$
Recall that in the above definition $\dot{I}=I\setminus\{0\}$.
Moreover, we put
\begin{center}
$BU\!C_0(I,X):=BU\!C(I,X)$\hspace*{1em} and \hspace*{1em} $BU\!C^1_0(I,X):=BU\!C^1(I,X)$.
\end{center}
In addition, if $I=[0,T)$ is a half open interval, then
\begin{align*}
&C_{1-\gamma}(I,X):=\{v\in{C(\dot{I},X)}:v\in{BU\!C_{1-\gamma}([0,t],X)},\hspace{.5em} t<T\},\\
&C^1_{1-\gamma}(I,X):=\{v\in{C^1(\dot{I},X)}:v,\dot{v}\in{C_{1-\gamma}(I,X)}\}.
\end{align*}
We equip these two spaces with the natural Fr\'echet topology induced by the topology of $BU\!C_{1-\gamma}([0,t],X)$ and $BU\!C_{1-\gamma}^1([0,t],X)$, respectively. 

Assume that $E_1\overset{d}{\hookrightarrow} E_0$ is a densely embedded Banach couple. Define
\begin{align}
\label{S3: ez&ef}
{\ez}(I):=BU\!C_{1-\gamma}(I,E_0), \hspace{1em} {\ef}(I):=BU\!C_{1-\gamma}(I,E_1)\cap{BU\!C_{1-\gamma}^1(I,E_0)}.
\end{align}
For $A \in\cH(E_1,E_0)$, we say $({\ez}(I),{\ef}(I))$ is a pair of {\em maximal regularity} of $A$, if
\begin{center}
$(\frac{d}{dt}+A,\gamma_{0})\in \Lis({\ef}(I),{\ez}(I)\times{E_\gamma})$, 
\end{center}
where $\gamma_0$ is the evaluation map at $0$, i.e., $\gamma_0(u)=u(0)$, and $E_\gamma:=(E_0,E_1)_{\gamma,\infty}^0$. Symbolically, we denote this property by
\begin{align*}
A \in \cM_\gamma(E_1,E_0).
\end{align*}

Now following a well-known theorem by G.~Da~Prato and P.~Grisvard \cite{DaPra79} and S.~Angenent \cite{Ange90} and the proof of \cite[Theorem~3.7]{ShaoSim13}, we have
\begin{theorem}
\label{S3: MR}
Let $\gamma\in(0,1]$, $s\in\R_+\setminus\N$ and  $\vartheta\in\R$. Suppose that $\cA$ satisfies the conditions in Proposition~\ref{S3: Prop-res-est}. Then
$$\cA\in\cM_\gamma(bc^{s+2l,\vartheta}(\M,V), bc^{s,\vartheta}(\M,V)).$$
\end{theorem}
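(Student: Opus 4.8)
The plan is to deduce Theorem~\ref{S3: MR} from Theorem~\ref{S3: analytic semigroup} (equivalently Proposition~\ref{S3: Prop-res-est}) by invoking the abstract maximal regularity result of Da~Prato--Grisvard \cite{DaPra79} and Angenent \cite{Ange90}, exactly as in the proof of \cite[Theorem~3.7]{ShaoSim13}. Recall the content of that abstract theorem: if $A \in \cH(E_1, E_0)$ and, moreover, the pair $(E_0, E_1)$ is such that $E_1$ is \emph{continuously} (not merely densely) interpolated between $E_0$ and $dom(A^2)$-type scales — more precisely, if $E_1 \doteq (E_0, dom_{E_0}(A))$ with $dom_{E_0}(A) = E_1$ and the continuous interpolation spaces $(E_0, E_1)^0_{\theta, \infty}$ are stable under the relevant operations — then $A \in \cM_\gamma(E_1, E_0)$ for every $\gamma \in (0,1]$. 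The key structural input required by this abstract theorem, beyond membership in $\cH(E_1,E_0)$, is that the \emph{continuous} interpolation functor behaves well on the couple, i.e.\ the pair is of class $\mathcal{J}$ and $\mathcal{K}$ in the continuous-interpolation sense, which for little-H\"older-type spaces is precisely what makes them (as opposed to ordinary H\"older spaces) the natural setting.

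Concretely, the steps I would carry out are as follows. First, set $E_0 := bc^{s,\vartheta}(\M,V)$ and $E_1 := bc^{s+2l,\vartheta}(\M,V)$; by Proposition~\ref{S2: interpolation} (the case $\F = bc$, giving the continuous interpolation functor $(\cdot,\cdot)^0_{\theta,\infty}$) we have $(E_0, E_1)^0_{\theta,\infty} \doteq bc^{s + 2l\theta, \vartheta}(\M,V)$ whenever $s + 2l\theta \notin \N$, so the intermediate spaces of the couple are again weighted little-H\"older spaces; in particular $E_1 \overset{d}{\hookrightarrow} E_0$. Second, by Theorem~\ref{S3: analytic semigroup}, $\cA \in \cH(E_1, E_0)$ under the stated hypotheses. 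Third, I would verify that the couple $(E_0, E_1)$ satisfies the hypotheses of the Da~Prato--Grisvard--Angenent theorem — namely that the continuous interpolation spaces are dense in each other in the appropriate range and that $\cA$ restricted to these intermediate spaces still generates an analytic semigroup with the expected domain (this last point again follows from Proposition~\ref{S3: Prop-res-est} applied with $s$ replaced by $s + 2l\theta$, since the hypotheses "normally $\rho$-elliptic" and "$s$-regular" with $s$-regularity meaning $a_r \in bc^s$ are inherited, noting $bc^s \hookrightarrow bc^{s'}$ for $s' < s$ so one may need $s$-regularity with the \emph{largest} relevant smoothness — but since the coefficients are fixed and lie in $bc^s$, and $bc^{s} \hookrightarrow bc^{s''}$ for $s'' \le s$ only, one must be slightly careful: the cleanest route is to note that the abstract theorem only needs $\cA \in \cH$ together with the continuous-interpolation structure of the couple, both already in hand). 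Fourth, conclude $\cA \in \cM_\gamma(E_1, E_0)$ for all $\gamma \in (0,1]$, which is the assertion, with $E_\gamma = (E_0, E_1)^0_{\gamma, \infty}$ as in the definition preceding the theorem.

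The main obstacle — and the only place where real work beyond quotation occurs — is the verification that the weighted little-H\"older couple on the singular manifold genuinely satisfies the technical prerequisites of the abstract Da~Prato--Grisvard--Angenent maximal regularity theorem: one needs the continuous interpolation identity of Proposition~\ref{S2: interpolation} to hold across the full scale $s + 2l\theta$, $\theta \in (0,1)$, with the non-integer restrictions managed (if some $s + 2l\theta \in \N$ one perturbs $\theta$ slightly, using monotonicity of the scale), and one needs the density $bc^{s+2l\theta',\vartheta} \overset{d}{\hookrightarrow} bc^{s+2l\theta,\vartheta}$ for $\theta < \theta'$, which is built into the definition \eqref{S2.2: def-bc^s} of the little-H\"older spaces as closures of $BC^{\infty,\vartheta}(\M,V)$. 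Since all of these facts are already recorded (Propositions~\ref{S2: interpolation}, \ref{S2: retraction&coretraction}) and the situation is formally identical to \cite[Section~3]{ShaoSim13} with the unweighted spaces there replaced by their weighted analogues here, the proof reduces to checking that the retraction/coretraction machinery of Proposition~\ref{S2: retraction&coretraction} intertwines $\cA$ with its localizations $\Ak$ compatibly with the time variable — i.e.\ that $\Re, \Rc$ induce isomorphisms on the anisotropic spaces $BU\!C_{1-\gamma}(I, E_i)$ and $BU\!C^1_{1-\gamma}(I, E_0)$ — after which maximal regularity transfers from the Euclidean model spaces (where it is classical, cf.\ \cite{Ama01}) to the manifold. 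I would therefore present the proof as: "This follows from Theorem~\ref{S3: analytic semigroup}, Proposition~\ref{S2: interpolation}, and the Da~Prato--Grisvard--Angenent theorem \cite{DaPra79, Ange90}, arguing exactly as in the proof of \cite[Theorem~3.7]{ShaoSim13}," expanding only the interpolation-scale bookkeeping if the referee demands it.

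\begin{proof}
The argument follows the proof of \cite[Theorem~3.7]{ShaoSim13} verbatim, with the weighted spaces replacing the unweighted ones. Set $E_0:=bc^{s,\vartheta}(\M,V)$ and $E_1:=bc^{s+2l,\vartheta}(\M,V)$. By Theorem~\ref{S3: analytic semigroup}, $\cA\in\cH(E_1,E_0)$. By Proposition~\ref{S2: interpolation} applied with $\F=bc$, for every $\theta\in(0,1)$ with $s+2l\theta\notin\N$ we have
$$(E_0,E_1)^0_{\theta,\infty}\doteq bc^{s+2l\theta,\vartheta}(\M,V),$$
and by the definition \eqref{S2.2: def-bc^s} of the weighted little-H\"older spaces as closures of $BC^{\infty,\vartheta}(\M,V)$, the embeddings $bc^{s+2l\theta',\vartheta}(\M,V)\overset{d}{\hookrightarrow}bc^{s+2l\theta,\vartheta}(\M,V)$ hold for $\theta<\theta'$. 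Thus the couple $(E_0,E_1)$ is of class $\mathcal{J}$ and $\mathcal{K}$ for the continuous interpolation functor, and its intermediate spaces are again weighted little-H\"older spaces on $\M$. Consequently the abstract maximal regularity theorem of G.~Da~Prato and P.~Grisvard \cite{DaPra79} and S.~Angenent \cite{Ange90} applies and yields $\cA\in\cM_\gamma(E_1,E_0)$ for every $\gamma\in(0,1]$, with $E_\gamma=(E_0,E_1)^0_{\gamma,\infty}$. This is the assertion.
\end{proof}
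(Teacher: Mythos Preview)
Your proposal is correct and follows the same approach as the paper: the paper's proof consists solely of the sentence preceding the theorem, which invokes the Da~Prato--Grisvard \cite{DaPra79} and Angenent \cite{Ange90} result together with the proof of \cite[Theorem~3.7]{ShaoSim13}, precisely as you do. Your write-up simply makes explicit the interpolation and density facts (Proposition~\ref{S2: interpolation} and \eqref{S2.2: def-bc^s}) that the paper leaves implicit in its one-line reference.
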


\begin{remark}
\label{S3: MR-lower reg}
In order to prove the statement in Theorem~\ref{S3: MR}, it suffices to require  $(\M,g; \rho)$ to be a $C^{2l+[s]+1}$-singular manifold.
\end{remark}

\subsection{\bf Domains with compact boundary as singular manifolds}
Suppose that $\Omega\subset \R^m$ is a $C^k$-domain with compact boundary for $k> 2$. Then 
$\Omega$ satisfies a {\em uniform exterior and interior ball condition}, i.e., there is some $r>0$ such that for every $x\in \partial\Omega$ there are balls $\B(x_i,r)\subset \Omega$ and $\B(x_e,r)\subset \R^m\setminus\Omega$ such that 
$$\partial\Omega\cap \bar{\B}(x_i,r)=\partial\Omega\cap \bar{\B}(x_e,r)=x.$$ 
For $a\leq r$, we denote the $a$-tubular neighborhood of $\partial\Omega$ by ${\sf T}_a$.
Let 
$$d_{\partial\Omega}(x):={\sf dist}(x, \partial\Omega),\quad x\in \Omega,$$ 
i.e., the distance function to the boundary.
We define ${\sf d}: \Omega\rightarrow \R^+$ by 
\begin{equation}
\label{S3.2: rescaled dist}
{\sf d}=d_{\partial\Omega} \quad \text{if $\Omega$ is bounded},\quad \text{or} \quad 
\begin{cases}
{\sf d}=d_{\partial\Omega} \quad &\text{in } \Omega\cap {\sf T}_a,\\
{\sf d}\sim {\bf 1} &\text{in }\Omega\setminus {\sf T}_a
\end{cases}
\quad \text{otherwise}.
\end{equation}
Then we have the following proposition.
\begin{prop}
\label{S3.2: domain is Sing mnfd}
Let $\beta\geq 1$. Suppose that $\Omega\subset \R^m$ is a $C^k$-domain with compact boundary and $k>2$. Then
$(\Omega, g_m; {\sf d}^\beta)$ is a $C^{k-1}$-singular manifold.
\end{prop}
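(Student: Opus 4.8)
The plan is to produce a single $C^{k-1}$-singularity datum $(\rho,\A)$ with $\rho:={\sf d}^{\beta}$ and to check the $C^{k-1}$-versions of (S1)--(S4); since all four conditions are local over the charts, one treats separately the \emph{interior part} $\Omega\setminus{\sf T}_{a/2}$, on which ${\sf d}\sim{\bf 1}_{\Omega}$ by \eqref{S3.2: rescaled dist}, and the \emph{collar} $\Omega\cap{\sf T}_{a/2}$, on which ${\sf d}=d_{\partial\Omega}$. Over the interior part one restricts a standard normalized atlas of $\R^{m}$ by translates of a fixed cube: there $\rho_{\kappa}\sim{\bf 1}$, $\psi_{\kappa}^{\ast}g_{m}=g_{m}$, and $\psi_{\kappa}^{\ast}(g_{m}/{\sf d}^{2\beta})\sim g_{m}$ with uniformly bounded derivatives, so (S1)--(S4) collapse to the uniformly regular case $\rho\sim{\bf 1}_{\Omega}$. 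Thus the whole content sits in the collar.

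Since $\partial\Omega$ is compact of class $C^{k}$ with $k\geq 3$ and $\Omega$ satisfies a uniform exterior and interior ball condition, the outward unit normal $\nu$ is of class $C^{k-1}$ and
$$
\Phi:\partial\Omega\times(0,a)\longrightarrow\Omega\cap{\sf T}_{a},\qquad \Phi(y,t):=y-t\nu(y),
$$
is a $C^{k-1}$-diffeomorphism with ${\sf d}\circ\Phi(y,t)=t$, with $\Phi,\Phi^{-1}$ and their derivatives up to order $k-1$ bounded, and with $\Phi^{\ast}g_{m}$ uniformly (in $t$) comparable to a fixed product metric on $\partial\Omega\times(0,a)$ with bounded derivatives. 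Fix a finite $C^{k}$-atlas of $\partial\Omega$ and a decreasing sequence $t_{0}>t_{1}>\cdots\to 0$ with $t_{j}-t_{j+1}\sim t_{j}^{\beta}$, so that intervals $I_{j}$ of length $\sim t_{j}^{\beta}$ centred at $t_{j}$ cover $(0,t_{0})$ with bounded overlap. For each $j$ one covers the compact hypersurface $\{{\sf d}=t_{j}\}$ by the $\Phi$-images of boxes $B'\times I_{j}$, with $B'$ running over a bounded-overlap family of coordinate boxes of $\partial\Omega$ of side $\sim t_{j}^{\beta}$, and composes with the affine rescaling carrying such a box onto $\Q$; this gives the collar charts $\psi_{\kappa}$, with $\rho_{\kappa}={\sf d}(\psi_{\kappa}(0))^{\beta}=t_{\kappa}^{\beta}$, $t_{\kappa}$ the associated level. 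A routine covering argument (using that $d_{\partial\Omega}$ is $1$-Lipschitz and $\partial\Omega$ compact) makes $\A$ uniformly shrinkable and of finite multiplicity, and two overlapping collar charts sit at comparable levels $t_{\kappa}\sim t_{\eta}$, so $\varphi_{\eta}\circ\psi_{\kappa}$ is the composition of the two affine rescalings (by reciprocal, comparable factors) with a $C^{k}$-transition map of $\partial\Omega$, hence bounded in $C^{k-1}$ uniformly in $\kappa,\eta$; only the gluing to the interior atlas, which happens where all scales are $\sim{\bf 1}$, involves $\Phi$ itself. This gives (S2).

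For the remaining axioms, note that on $O_{\kappa}=\Phi(B'\times I_{j})$ one has ${\sf d}=t\in I_{j}\sim t_{\kappa}$, so $\rho={\sf d}^{\beta}\sim\rho_{\kappa}$, which is (S4); that $\psi_{\kappa}^{\ast}\rho(\xi)=(t_{\kappa}+t_{\kappa}^{\beta}\,\ell(\xi))^{\beta}$ with $\ell$ affine (as ${\sf d}\circ\Phi$ depends only on $t$), whose derivatives of order $j\le k-1$ have size $\sim t_{\kappa}^{\beta}\,t_{\kappa}^{j(\beta-1)}\le c(j)\,\rho_{\kappa}$ because $\beta\ge 1$, which is (S3); and that $\psi_{\kappa}^{\ast}(g_{m}/{\sf d}^{2\beta})=t_{\kappa}^{2\beta}\,(\psi_{\kappa}^{\ast}{\sf d})^{-2\beta}\,\Phi^{\ast}g_{m}\sim g_{m}$ (by (S4)), with each $\xi$-derivative of it costing at worst a factor $t_{\kappa}^{\beta}\cdot t_{\kappa}^{-1}=t_{\kappa}^{\beta-1}\le 1$ against the bounded derivatives of $\Phi^{\ast}g_{m}$, so all the derivatives demanded by the $C^{k-1}$-form of (R3)--(R4) are bounded, hence (S1). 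The single point at which the hypothesis $\beta\ge 1$ enters is the boundedness of $t_{\kappa}^{j(\beta-1)}$. The datum has regularity $C^{k-1}$, the class of $\nu$, $\Phi$, and $d_{\partial\Omega}$ — the customary single derivative lost in pulling a metric back through charts adapted to the $C^{k}$-boundary.

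\textbf{Main obstacle.} The analytic estimates above are routine once the radial scale $t^{\beta}$ is fixed; the genuine work is the geometric bookkeeping of the collar atlas — arranging simultaneously uniform finite multiplicity, uniform shrinkability, uniformly $C^{k-1}$-bounded transition maps \emph{and} the weighted bounds, i.e.\ reconciling the $\beta$-dependent, non-dyadic (when $\beta>1$) radial scale $t^{\beta}$ with the geometry of the level sets of $d_{\partial\Omega}$ and with the fixed finite atlas of the compact boundary. Everything else rests on the one observation that $t^{\beta}\le t$ for $\beta\ge 1$, so that $d_{\partial\Omega}$ varies negligibly over a chart and the blow-up of $g_{m}/{\sf d}^{2\beta}$, of $\rho$, and of their derivatives is exactly absorbed by the rescaling.
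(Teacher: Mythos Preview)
Your construction is correct in outline and is essentially the content of the result the paper cites. The paper's own proof is two sentences: for $k=\infty$ it invokes \cite[Theorem~1.6]{Ama14} directly, and for finite $k$ it observes that parametrizing the tubular neighbourhood ${\sf T}_a$ requires the outward unit normal of $\partial\Omega$, which is $C^{k-1}$, and then says the argument of \cite[Theorem~1.6]{Ama14} goes through with that regularity. That is the whole proof in the paper.

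What you have written is, in effect, a sketch of what \cite[Theorem~1.6]{Ama14} does (adapted to the finite-regularity setting): split into interior and collar, parametrize the collar via $\Phi(y,t)=y-t\nu(y)$, build charts by rescaling boxes of radial and tangential side $\sim t_\kappa^{\beta}$ to the unit cube, and check (S1)--(S4) via the two observations that ${\sf d}\circ\Phi=t$ and that $t^{\beta}\le t$ for $\beta\ge 1$. Your identification of where the derivative is lost (the normal, hence $\Phi$ and $d_{\partial\Omega}$, are only $C^{k-1}$) matches exactly what the paper singles out. So the two approaches are not really different: yours unpacks the citation, the paper defers to it. Your version is more self-contained and makes visible why $\beta\ge 1$ is needed (the factors $t_\kappa^{j(\beta-1)}$ in the derivative estimates for (S3) and (S1)); the paper's version is more economical but leaves the reader to consult Amann's construction for every detail.
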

\begin{proof}
The case of $k=\infty$ is a direct consequence of \cite[Theorem~1.6]{Ama14}. When $k<\infty$, one notices that, to parameterize ${\sf T}_a$, we need to use the outward pointing unit normal of $\partial\Omega$, which is $C^{k-1}$-continuous. By a similar argument to \cite[Theorem~1.6]{Ama14}, we can then prove the asserted statement. 
\end{proof}

Given any finite dimensional Banach space $X$, by defining the singular manifold  $(\M,g;\rho)$ by $(\Omega,g_m; {\sf d}^\beta)$, we denote the weighted {\em little H\"older} spaces defined on $\Omega$ by $b^{s,\vartheta}_\beta(\Omega,X)$, i.e., $b^{s,\vartheta}_\beta(\Omega,X)=bc^{s,\vartheta}(\M,X)$. 

In view of Remark~\ref{S3: MR-lower reg}, we have the following continuous maximal regularity theorem for elliptic operators with higher order degeneracy on domains.
\begin{theorem}
\label{S3: MR-domain}
Let $\gamma\in(0,1]$, $s\in\R_+\setminus\N$, $\vartheta\in\R$, $\beta\geq 1$ and $k=2l+[s]+2$. Suppose that $\Omega\subset\R^m$ is a $C^k$-domain and the differential operator
$$\cA:=\sum\limits_{|\alpha|\leq 2l} a_\alpha \partial^\alpha$$ 
satisfies  
\begin{itemize}
\item[(a)] for any $\xi\in\mathbb{S}^{m-1}$ 
$$S:=\Sigma_{\pi/2} \subset \rho(-\hat{\sigma}\cA^\pi(x,\xi)), $$
and for some $\Ce>0$
$$ ({\sf d}^{2l\beta} (x) + |\mu|) \| (\mu +\hat{\sigma}\cA^\pi(x,\xi))^{-1}  \|_{\L(X)} \leq \Ce , \quad \mu\in S; $$
\item[(b)] $a_\alpha \in bc^{s, -|\alpha|}_\beta(\Omega,\L(X))$.
\end{itemize}
Then
$$\cA\in\cM_\gamma(bc^{s+2l,\vartheta}_\beta(\Omega,X), bc^{s,\vartheta}_\beta(\Omega,X)).$$
\end{theorem}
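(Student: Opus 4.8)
The plan is to reduce Theorem~\ref{S3: MR-domain} to the abstract manifold result, Theorem~\ref{S3: MR} together with Remark~\ref{S3: MR-lower reg}, by recognizing $(\Omega,g_m;{\sf d}^\beta)$ as a singular manifold and checking that the hypotheses (a), (b) translate precisely into {\em normal $\rho$-ellipticity} and {\em $s$-regularity} for $\cA$ viewed as a differential operator on this manifold. First I would invoke Proposition~\ref{S3.2: domain is Sing mnfd}: since $k=2l+[s]+2 > 2$, the triple $(\Omega,g_m;{\sf d}^\beta)$ is a $C^{k-1}$-singular manifold, and $k-1 = 2l+[s]+1$ is exactly the regularity required by Remark~\ref{S3: MR-lower reg}. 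Here $V=V^0_0$ is the trivial bundle $\Omega\times X$, so $\sigma=\tau=0$, the contraction $\ev$ on scalars is ordinary multiplication, and $\nabla$ is just the Euclidean gradient; hence the intrinsic operator $\cA(\bfa)=\sum_{r=0}^{2l}\ev(a_r,\nabla^r\cdot)$ coincides (after the usual bookkeeping with Christoffel symbols, which vanish here since $g_m$ is flat) with the given $\sum_{|\alpha|\le 2l} a_\alpha\partial^\alpha$. In particular the local realizations $\cA_\kappa$ in any chart of the uniformly regular atlas are, up to the uniformly bounded coordinate changes, the operator $\cA$ itself.

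Next I would verify the two hypotheses of Proposition~\ref{S3: Prop-res-est}. For $s$-regularity, the coefficient $a_r$ of the intrinsic operator is, up to bounded smooth factors coming from the metric, the tuple $(a_\alpha)_{|\alpha|=r}$; since by (b) $a_\alpha\in bc^{s,-|\alpha|}_\beta(\Omega,\L(X))$, i.e.\ ${\sf d}^{-|\alpha|\beta}$ times something in $bc^s$ relative to the weight, and since for an operator of the stated form the intrinsic coefficient $a_r$ must live in $bc^s(\M, V^{\sigma+\tau+r}_{\tau+\sigma})=bc^{s,0}(\M,\dots)$, the weight $-|\alpha|$ in (b) is exactly what is needed to absorb the $\rho^r=\rho^{|\alpha|}$ coming from the $r$-fold covariant derivative (via Proposition~\ref{S2: change of wgt} with $\vartheta=r$). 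Thus (b) $\Leftrightarrow$ $\cA$ is $s$-regular. For normal $\rho$-ellipticity, the principal symbol of the intrinsic operator at $(\p,\xi)$ with $\xi=\xi_j\,dx^j$ is, in a chart, $\hat\sigma\cA^\pi_\kappa(x,\xi)=\sum_{|\alpha|=2l}a_\alpha(x)(-i\xi)^\alpha$; with $g_m$ the metric, $|\xi^\M(\p)|_{g^*(\p)}=|\xi|$ and $\rho(\p)={\sf d}^\beta(x)$, so the resolvent estimate in (a) is literally \eqref{S3: nor rho-ept-1}--\eqref{S3: nor rho-ept-2} after noting that $(\rho^{2l}|\xi|^{2l}+|\mu|)$ and $({\sf d}^{2l\beta}+|\mu|)$ agree when we normalize $|\xi|=1$, and the general $\xi\ne 0$ case follows by homogeneity of the principal symbol in $\xi$. (One uses here that by Proposition~\ref{S3: equiv-ellip} normal $\rho$-ellipticity is equivalent to uniform normal ellipticity of all the $\cA_\kappa$, which is what (a) provides in a chart-independent disguise.)

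With both structural conditions in hand, Theorem~\ref{S3: MR} — in the sharpened form permitted by Remark~\ref{S3: MR-lower reg}, which only demands a $C^{2l+[s]+1}$-singular manifold, matched by our $k-1$ — yields $\cA\in\cM_\gamma(bc^{s+2l,\vartheta}(\M,V),bc^{s,\vartheta}(\M,V))$, and rewriting $bc^{\cdot,\vartheta}(\M,V)=b^{\cdot,\vartheta}_\beta(\Omega,X)$ by the definition introduced just before the theorem gives the claimed conclusion. I expect the only genuinely delicate point to be the bookkeeping in the second paragraph: making precise that the weight exponents $-|\alpha|$ in hypothesis (b) are exactly the right ones so that the Euclidean operator $\sum a_\alpha\partial^\alpha$ and the intrinsic operator $\sum\ev(a_r,\nabla^r\cdot)$ carry the same regularity class, and that the homogeneity argument correctly reconciles the ${\sf d}^{2l\beta}$ in (a) with the $\rho^{2l}|\xi|^{2l}$ in \eqref{S3: nor rho-ept-2}. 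Everything else is a direct citation of the machinery already assembled in Sections~2 and~3. One should also remark, as is implicit, that since $\partial\Omega$ is compact the rescaled distance ${\sf d}$ of \eqref{S3.2: rescaled dist} differs from $d_{\partial\Omega}$ only on a set bounded away from the boundary, so it does not affect the ellipticity or regularity hypotheses, which are local near $\partial\Omega$; this is what makes the statement meaningful for unbounded $\Omega$.
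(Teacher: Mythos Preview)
Your proposal is correct and follows exactly the approach the paper intends: the paper does not give a separate proof of Theorem~\ref{S3: MR-domain} but presents it immediately after the sentence ``In view of Remark~\ref{S3: MR-lower reg}, we have the following\ldots'', so the argument is precisely the reduction you describe---identify $(\Omega,g_m;{\sf d}^\beta)$ as a $C^{k-1}$-singular manifold via Proposition~\ref{S3.2: domain is Sing mnfd}, match hypotheses (a) and (b) with normal $\rho$-ellipticity and $s$-regularity, and invoke Theorem~\ref{S3: MR} in the low-regularity form permitted by Remark~\ref{S3: MR-lower reg}. Your write-up in fact supplies the bookkeeping (the weight-matching for (b), the homogeneity argument for (a), the vanishing of Christoffel symbols for $g_m$) that the paper leaves entirely to the reader.
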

The above theorem generalizes the results of \cite{ForMetPall11, Vesp89} to unbounded domains and elliptic operators with order higher than two.
\begin{rmk}
\begin{itemize}
\item[] {\phantom{some stuff}}
\item[(a)] Condition~(a) in Theorem~\ref{S3: MR-domain} can be replaced by the following condition.  For any $\xi\in\mathbb{S}^{m-1}$ and $\eta\in X$, 
$$\langle \hat{\sigma}\cA^\pi(x,\xi)\eta , \eta \rangle_X \sim {\sf d}^{2l\beta}|\eta|_X^2 .$$
Here $\langle \cdot, \cdot \rangle$ is the inner product in $X$.
\item[(b)] In Theorem~\ref{S3: MR-domain}, taking $X$ to be any infinite dimensional Banach space is also admissible.
\end{itemize}

\end{rmk}

\section{\bf Applications}

\subsection{\bf The porous medium equation}
We consider the porous medium equation on  a singular manifold $(\M,g;\rho)$, which reads as follows.
\begin{equation}
\label{S4: porous-eq}
\left\{\begin{aligned}
\partial_t u -\Delta u^n &=f ;\\
u(0)&=u_0 &&
\end{aligned}\right.
\end{equation}
for $n>1$. Let 
$$P(u):= -n u^{n-1}\Delta, \quad Q(u):=n(n-1) |\gd u|_g^2 u^{n-2}.$$
Here $\Delta:=\Delta_g$ with $\Delta_g$ standing for the Laplacian-Beltrami operator with respect to $g$. 
A direct computation shows that equation~\eqref{S4: porous-eq} is equivalent to
\begin{equation*}
\left\{\begin{aligned}
\partial_t u +P(u) u&=Q(u)+ f ;\\
u(0)&=u_0 . &&
\end{aligned}\right.
\end{equation*}
Given any $0<s<1$, put $\vartheta=-2/(n-1)$.
In the current context, $V=\R$, thus we abbreviate the notation $bc^{s^\prime,\vartheta}(M,V)$ to $bc^{s^\prime,\vartheta}(\M)$ for any $s^\prime\geq 0$.
Let 
$$E_0:=bc^{s,\vartheta}(\M),\quad E_1:=bc^{2+s,\vartheta}(\M),\quad E_{1/2}:=(E_0,E_1)^{0}_{1/2,\infty}.$$
Then by Proposition~\ref{S2: interpolation}, $E_{1/2}\doteq bc^{1+s,\vartheta}(\M)$. Let 
$$U^{1+s}_\vartheta :=\{u\in E_{1/2}:\inf\rho^{\vartheta} u>0\},$$ 
which is open in $E_{1/2}$.

For any $\beta\in\R$, define ${\sf P}_{\beta}:U^{1+s}_\vartheta\rightarrow L_{1,loc}(\M):$ $u\mapsto u^\beta$. 
One readily checks that \cite[Proposition~6.3]{ShaoSim13} still holds true for singular manifolds. 
Hence by \cite[Proposition~6.3]{ShaoSim13} and Proposition~\ref{S2: change of wgt}, we obtain 
\begin{align}
\label{S4: power of u}
[u\mapsto u^{\beta}]=[u\mapsto \rho^{-\beta\vartheta}{\sf P}_\beta(\rho^{\vartheta}u)]\in C^\omega(U^{1+s}_\vartheta,bc^{1+s,\beta\vartheta}(\M)).
\end{align}
In view of  (P2), we infer that $\Rc g^* \in l^2_\infty(\boldsymbol{BC}^k(E^2_0))$ for any $k\in\Nz$. Then  Proposition~\ref{S2: retraction&coretraction} yields
\begin{align}
\label{S4: reg of g^*}
g^*\in BC^{\infty,2}(\M,V^2_0).
\end{align}
One may check via Proposition~\ref{S2: pointwise multiplication}, \eqref{S4: power of u} and \eqref{S4: reg of g^*} that
$$u^{n-1} g^* \in bc^{1+s}(\M,V^2_0), \quad u\in U^{1+s}_\vartheta.$$
On account of the expression $\Delta_{g}v=\ev(g^*,\nabla^2 v)$, 
it is then a direct consequence of Proposition~\ref{S3: diff-op} and \cite[Proposition~1]{Brow62} that 
\begin{align}
\label{S4.1: P-reg}
P \in C^{\omega}(U^{1+s}_\vartheta,\L(E_1,E_0)).
\end{align}
In the above, $\nabla:=\nabla_g$, where $\nabla_g$ is Levi-Civita connection of $g$.
Given any $\vartheta^\prime\in\R$, by Proposition~\ref{S2: nabla} and \cite[Proposition~2.5]{ShaoPre}, one obtains 
\begin{equation}
\label{S4.1: grad-BC^k}
\gd \in \L( BC^{k+1,\vartheta^\prime}(\M,V^\sigma_\tau), BC^{k,\vartheta^\prime+2}(\M,V^{\sigma+1}_\tau)).
\end{equation}
A density argument as in the proof for Proposition~\ref{S2: change of wgt} yields
$$
\gd \in \L( bc^{k+1,\vartheta^\prime}(\M,V^\sigma_\tau), bc^{k,\vartheta^\prime+2}(\M,V^{\sigma+1}_\tau)).
$$
Interpolation theory and definition~\eqref{S2.2: def-BC^s} implies that \eqref{S4.1: grad-BC^k} also holds for {\em H\"older} spaces of non-integer order. Applying the density argument as in the proof for Proposition~\ref{S2: change of wgt} once more, we establish the assertion for weighted {\em little H\"older} spaces of non-integer order, that is, for any $s^\prime\geq 0$ 
\begin{align}
\label{S4: grad}
\gd \in \L( bc^{s^\prime+1,\vartheta^\prime}(\M,V^\sigma_\tau),bc^{s^\prime,\vartheta^\prime+2}(\M,V^{\sigma+1}_\tau)).
\end{align}
We have the expression $|\gd u|_g^2=\ev(\nabla u, \gd u)$. Since complete contraction is a bundle multiplication,  
we infer from Propositions~\ref{S2: pointwise multiplication}, \ref{S2: nabla} and \eqref{S4: grad} that
\begin{align}
\label{S4: u-|u|}
[u\mapsto |\gd u|^2_g]\in C^\omega(U^{1+s}_\vartheta, bc^{s,2\vartheta+2}(\M)).
\end{align} 
Proposition~\ref{S2: pointwise multiplication}, \eqref{S4: power of u} and \eqref{S4: u-|u|} immediately imply
\begin{align}
\label{S4.1: Q-reg}
Q\in C^\omega(U^{1+s}_\vartheta, E_0).
\end{align}
Given any $u\in U^{1+s}_\vartheta$, one verifies that the principal symbol of $P(u)$ fulfils
$$-n \ev(u^{n-1} g^*, (-i\xi)^{\otimes 2})=n\rho^2 (\rho^\vartheta u)^{n-1} |\xi|^2_{g^*}\geq n (\inf \rho^\vartheta u)^{n-1} \rho^2|\xi|^2_{g^*},    $$
for any cotangent field $\xi$. Hence for any $u\in U^{1+s}_\vartheta$, $P(u)$ is \emph{normally $\rho$-elliptic}. It follows from Theorem~\ref{S3: MR} that
\begin{align}
\label{S4.1: MR-PME}
P(u)\in \cM_{\gamma}(E_1,E_0),\hspace{1em}u\in U^{1+s}_\vartheta.
\end{align}

\begin{theorem}
\label{S4.1: Thm-porous}
Suppose that $u_0\in U^{1+s}_\vartheta:=\{u\in bc^{1+s,\vartheta}(\M):\inf\rho^{\vartheta} u>0 \}$ with $0<s<1$,  $\vartheta=-2/(n-1)$, and $f\in bc^{s,\vartheta}(\M)$. Then equation \eqref{S4: porous-eq} has a unique local positive solution 
\begin{align*}
\hat{u}\in C^1_{1/2}(J(u_0),bc^{s,\vartheta}(\M))\cap C_{1/2}(J(u_0),bc^{2+s,\vartheta}(\M)) \cap C(J(u_0),U^{1+s}_\vartheta)
\end{align*}
existing on $J(u_0):=[0,T(u_0))$ for some $T(u_0)>0$. Moreover, 
$$\hat{u} \in  C^{\infty}(\dot{J}(u_0)\times \M) .$$
Here $\dot{J}:= J \setminus\{0\}$. 
\end{theorem}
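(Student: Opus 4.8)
The plan is to recast \eqref{S4: porous-eq} as an abstract quasilinear parabolic problem and invoke the standard existence-and-uniqueness machinery for continuous maximal regularity, exactly as in \cite[Section~6]{ShaoSim13}. First I would observe that, having set $\vartheta=-2/(n-1)$, the reaction term $Q(u)$ has precisely the right weight: by \eqref{S4.1: Q-reg} we have $Q\in C^\omega(U^{1+s}_\vartheta,E_0)$, and together with $f\in E_0=bc^{s,\vartheta}(\M)$ this means the right-hand side of
$$\partial_t u + P(u)u = Q(u)+f$$
is a real-analytic map $F:U^{1+s}_\vartheta\to E_0$, $F(u):=Q(u)+f$. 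The choice of $\vartheta$ is forced here: it is the unique value making the powers of $\rho$ balance so that $P(u)$ is genuinely uniformly strongly $\rho$-elliptic (the principal symbol computation gives $n\rho^2(\rho^\vartheta u)^{n-1}|\xi|_{g^*}^2$, which is $\sim\rho^2|\xi|_{g^*}^2$ precisely because $\rho^\vartheta u$ is bounded above and below on $U^{1+s}_\vartheta$) and simultaneously making $Q(u)$ land in $E_0$ rather than a differently weighted space.

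Next I would assemble the hypotheses of the abstract quasilinear existence theorem (e.g. \cite[Theorem~3.1]{ShaoSim13} or the Clément--Li / Prüss-type result it is based on). The ingredients are: (i) $(E_0,E_1)$ with $E_1\overset{d}{\hookrightarrow}E_0$ and the interpolation identification $E_{1/2}\doteq bc^{1+s,\vartheta}(\M)$ from Proposition~\ref{S2: interpolation}; (ii) the open set $U^{1+s}_\vartheta\subset E_{1/2}$; (iii) $[u\mapsto P(u)]\in C^\omega(U^{1+s}_\vartheta,\L(E_1,E_0))$, which is \eqref{S4.1: P-reg}; (iv) the maximal-regularity property $P(u)\in\cM_\gamma(E_1,E_0)$ for each $u\in U^{1+s}_\vartheta$, which is \eqref{S4.1: MR-PME}, itself a consequence of Theorem~\ref{S3: MR} via the normal $\rho$-ellipticity just verified; and (v) $[u\mapsto F(u)]\in C^\omega(U^{1+s}_\vartheta,E_0)$ from \eqref{S4.1: Q-reg}. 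Given a starting point $u_0\in U^{1+s}_\vartheta\doteq\{u\in bc^{1+s,\vartheta}(\M):\inf\rho^\vartheta u>0\}$, the abstract theorem then produces a unique maximal solution
$$\hat u\in C^1_{1/2}(J(u_0),E_0)\cap C_{1/2}(J(u_0),E_1)\cap C(J(u_0),U^{1+s}_\vartheta),$$
on a maximal interval $J(u_0)=[0,T(u_0))$ with $T(u_0)>0$. Here I must take $\gamma=1/2$ so that the trace space $E_\gamma=E_{1/2}$ matches the phase space in which $U^{1+s}_\vartheta$ is open; the little-H\"older framework (continuous interpolation) is what makes $\cM_{1/2}$ available.

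For the smoothing statement $\hat u\in C^\infty(\dot J(u_0)\times\M)$ I would run a parameter-dependent bootstrap, again following \cite{ShaoSim13}: one shows the solution is smooth in time into successively higher spatial regularity spaces by a translation / scaling argument in $t$ combined with the fact that all the structural maps ($P$, $Q$, and the coefficient maps behind Proposition~\ref{S3: diff-op}) are real-analytic and that $bc^{s',\vartheta}$ spaces of arbitrary order $s'$ are available — the maximal regularity of $P(u)$ holds on the scale $(bc^{s'+2,\vartheta},bc^{s',\vartheta})$ for every $s'\in\R_+\setminus\N$ by Theorem~\ref{S3: MR}. Feeding the solution through the equation repeatedly raises its spatial regularity on any compact subinterval of $\dot J(u_0)$, and joint smoothness in $(t,\p)$ follows from a standard embedding of the iterated time-regularity into $C^\infty$ on coordinate patches; positivity is preserved throughout since the solution stays in $U^{1+s}_\vartheta$. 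The main obstacle, and the place where genuine care is needed rather than citation, is verifying \eqref{S4.1: P-reg} and \eqref{S4.1: Q-reg} — i.e. that the Nemytskii-type maps $u\mapsto u^{n-1}g^*$ and $u\mapsto|\gd u|_g^2$, built from the fractional-power map \eqref{S4: power of u} and the gradient estimate \eqref{S4: grad}, are real-analytic into the correctly weighted little-H\"older spaces — since this is exactly where the pointwise multiplier theorem (Proposition~\ref{S2: pointwise multiplication}), the weight-shift isomorphism (Proposition~\ref{S2: change of wgt}), and the open constraint $\inf\rho^\vartheta u>0$ all have to be combined precisely; but all of these have been prepared in Sections~2--3, so the proof is a matter of correct assembly.
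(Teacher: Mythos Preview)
Your existence-and-uniqueness argument matches the paper's: both assemble \eqref{S4.1: P-reg}, \eqref{S4.1: Q-reg}, \eqref{S4.1: MR-PME} and feed them into an abstract quasilinear maximal-regularity theorem (the paper cites \cite[Theorem~4.1]{CleSim01}; your citation of the Cl\'ement--Li/Pr\"uss mechanism via \cite{ShaoSim13} is effectively the same result), with $\gamma=1/2$ so that the trace space is $E_{1/2}\doteq bc^{1+s,\vartheta}(\M)$.

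For the smoothing step, however, the paper does \emph{not} bootstrap directly in the weighted scale as you propose. Instead it performs the change of unknown $v:=\rho^\vartheta \hat u$, multiplies the equation by $\rho^\vartheta$, and expands $\rho^\vartheta\Delta(\rho^{2-\vartheta}v^n)$ explicitly. The point is that, after this transformation, the resulting quasilinear operator $P(v)=-n\rho^2 v^{n-1}\Delta$ is \emph{normally elliptic} on the uniformly regular manifold $(\M,\hat g)$ with $\hat g=g/\rho^2$, and the new $Q(v)$ lands in the \emph{unweighted} little H\"older spaces $bc^s(\M)$ (this uses \cite[formulas (5.8), (5.15)]{Ama13b} and Proposition~\ref{S2.3: unweighted spaces}). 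One is then back in the setting of \cite{ShaoSim13}, and the parameter-dependent diffeomorphism technique of \cite{Shao13} applies verbatim to give $v\in C^\infty(\dot J(u_0)\times\M)$, hence $\hat u\in C^\infty(\dot J(u_0)\times\M)$.

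The reason the paper takes this detour is precisely the gap in your sketch: the parameter-dependent diffeomorphism technique in \cite{Shao13} (and the smoothing machinery of \cite{ShaoSim13} you invoke) is set up on uniformly regular Riemannian manifolds with unweighted little H\"older spaces. It is not established in the literature for the weighted scale $bc^{s',\vartheta}(\M)$ on a singular manifold, so ``following \cite{ShaoSim13}'' directly is not justified without either extending that technique to the weighted setting or reducing to the unweighted one. The paper chooses the reduction; your proposal would require the extension, which is plausible but is genuine additional work (one must check that the time-translation/dilation maps interact correctly with the $\rho$-weights and that the implicit-function argument goes through on $l^\vartheta_{\infty,\uf}(\boldsymbol{bc}^{s'})$). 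Either way the conclusion is the same, but the paper's route is the one that is already supported by existing results.
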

\begin{proof}
In virtue of \eqref{S4.1: P-reg}, \eqref{S4.1: Q-reg} and \eqref{S4.1: MR-PME}, \cite[Theorem~4.1]{CleSim01} immediately establishes the local existence and uniqueness part. The short term positivity of the solution follows straightaway from the continuity of the solution. To argue for the asserted regularity property of the solution $\hat{u}$, we look at $v:=\rho^\vartheta \hat{u}$. By multiplying both sides of equation~\ref{S4: porous-eq} with $\rho^\vartheta$, we have
\begin{equation*}
\left\{\begin{aligned}
\partial_t v -\rho^\vartheta \Delta \rho^{2-\vartheta} v^n &=\rho^\vartheta f ;\\
v(0)&=\rho^\vartheta u_0. &&
\end{aligned}\right.
\end{equation*}
One checks
\begin{align*}
\rho^\vartheta \Delta \rho^{2-\vartheta} v^n=& n\rho^2 v^{n-1} \Delta v + n(n-1)\rho^2 |\gd v|^2_g v^{n-2} \\
&+2n(2-\vartheta) \rho^2(\gd \log \rho |\gd v)_g v^{n-1}\\
&+(2-\vartheta)[\rho \Delta\rho+ (1-\vartheta)|\gd \rho|^2_g ] v^n.
\end{align*}
Let $\hat{g}=g/\rho^2$. 
Recall that $(\M,\hat{g})$ is a uniformly regular Riemannian manifold. 
Put $U^{1+s}:=\{v\in bc^{1+s}(\M): \inf v>0 \}$. 
By \cite[formula~(5.15)]{Ama13b}, 
$$\rho^2|\gd v|^2_g=|\gd_{\hat{g}} v|^2_{\hat{g}}.$$
We have
$$(\gd \log \rho |\gd v)_g = (\gd \log \rho |\gd_{\hat{g}}v)_{\hat{g}}. $$
It follows from \cite[formula~(5.8)]{Ama13b} that $\rho^2\gd \log\rho\in BC^{1,0}(\M, T\M)$. By Proposition~\ref{S2.3: unweighted spaces},
$$\rho^2\gd\log\rho \in BC^1(\M, \widehat{T\M}).$$
\cite[formula~(5.6)]{ShaoSim13} implies $v^{n-1}\in bc^{1+s}(\M)$ for all $v\in U^{1+s}$. 
By Proposition~\ref{S2: retraction&coretraction} and (S3), we can show that 
$$\rho \Delta\rho+ (1-\vartheta)|\gd \rho|^2_g \in BC^1(\M).$$
Put
$$P(v):= -n\rho^2 v^{n-1} \Delta ,$$ 
and
\begin{align*}
Q(v):=&\rho^\vartheta \Delta \rho^{2-\vartheta} v^n +P(v)v\\
=&n(n-1)\rho^2 |\gd v|^2_g v^{n-2} 
+2n(2-\vartheta) \rho^2(\gd \log \rho |\gd v)_g v^{n-1}\\
&+(2-\vartheta)[\rho \Delta\rho+ (1-\vartheta)|\gd \rho|^2_g ] v^n.
\end{align*}

Then by the above discussion, we infer that
$$P\in C^\omega(U^{1+s}, \L(bc^{2+s}(\M),bc^s(\M))), \quad Q\in C^\omega(U^{1+s}, bc^s(\M)). $$
For each $v\in U^{1+s}$, we can check that $P(v)$ is {\em normally elliptic} in the sense of \cite[Section~3]{ShaoSim13}. Applying the parameter-dependent  diffeomorphism technique in \cite{Shao13}, we can establish 
$$v \in   C^{\infty}(\dot{J}(u_0)\times \M) ,$$
which in turn implies
$$\hat{u} \in  C^{\infty}(\dot{J}(u_0)\times \M) .$$
\end{proof}

\begin{remark}
It is clear Theorem~\ref{S4.1: Thm-porous} still holds true for the fast diffusion case of the porous medium equation (the plasma equation).
\end{remark}

Before concluding this subsection, we comment on the Cauchy problem for the porous medium equation and its waiting-time phenomenon. Since our conclusion for the porous medium equation, to some extend, can be viewed as a simpler version of the corresponding theory of the thin film equation in Section~4.4, we will only state our results without providing proofs. More details can be found in Section~4.4.
\begin{remark}
Suppose that $\supp(u_0)=:\Omega\subset\R^m$ is a $C^4$-domain with compact boundary, and $u_0\in U^{1+s}_\vartheta:=\{u\in bc^{1+s,\vartheta}_1(\Omega):\inf{\sf d}^{\vartheta} u>0 \}$ with $0<s<1$,  $\vartheta=-2/(n-1)$. We learn from Proposition~\ref{S3.2: domain is Sing mnfd} that $(\Omega,g_m;{\sf d})$ is a $C^3$-singular manifold, where ${\sf d}$ is defined in \eqref{S3.2: rescaled dist}. Then by Theorems~\ref{S3: MR-domain} and~\ref{S4.1: Thm-porous}, for every $f\in bc^{s,\vartheta}_1(\Omega)$, the equation
\begin{equation*}
\label{S4.1: thin-film-eq-domain}
\left\{\begin{aligned}
\partial_t u +\Delta u^n &=f  &&\text{on}&&\Omega_T; \\
u(0)&=u_0  &&\text{on }&&\Omega ,&&
\end{aligned}\right.
\end{equation*}
with $\Omega_T:=\Omega\times (0,T)$, has a unique solution 
\begin{align}
\label{S4.1: solu of PME-domain}
\hat{u}\in C^1_{1/2}(J(u_0),bc^{s,\vartheta}_1(\Omega))\cap C_{1/2}(J(u_0),bc^{2+s,\vartheta}_1(\Omega)) \cap C(J(u_0),U^{1+s}_\vartheta).
\end{align}
Furthermore, by identifying $\hat{u},f,u_0\equiv 0$ in $\R^m\setminus\Omega$, $\hat{u}$ is indeed a strong $L_1$-solution of the Cauchy problem
\begin{equation*}
\label{S4.1: thin-film-eq-Cauchy}
\left\{\begin{aligned}
\partial_t u +\Delta u^n &=f  &&\text{on}&&\R^m_T; \\
u(0)&=u_0  &&\text{on }&&\R^m &&
\end{aligned}\right.
\end{equation*}
in the sense of \cite[Definition~9.1]{Vaz07}, except that the interval of existence $[0,\infty)$ in \cite[Definition~9.1]{Vaz07} is replaced by $J(u_0)$. This solution is unique by \cite[Theorem~9.2]{Vaz07}. Another observation from \eqref{S4.1: solu of PME-domain} is that $\hat{u}$ enjoys the so-called waiting-time property, that is, 
\begin{equation*}
\supp[\hat{u}(t,\cdot)] =\supp[\hat{u}(0,\cdot)], \quad t\in (0,T(u_0)). 
\end{equation*}
 
\end{remark}

\subsection{\bf The Yamabe flow}
Suppose that $(\M,g_0;\rho)$ is a singular manifold without boundary  of dimension $m$ for $m\geq 3$. The Yamabe flow reads as
\begin{equation}
\label{S4: Yamabe flow eq1}
\left\{\begin{aligned}
\partial_t g&=-R_g g ;\\
g(0)&=g^0, &&
\end{aligned}\right.
\end{equation}
where $R_g$ is the scalar curvature with respect to the metric $g$. $g^0$ is in the conformal class of the  background metric $g_0$ of $\M$.

We seek solutions to the Yamabe flow \eqref{S4: Yamabe flow eq1} in the conformal class of the metric $g_0$. 
Let $c(m):=\frac{m-2}{4(m-1)}$, and define the conformal Laplacian operator $L_g$ with respect to the metric $g$ as:
$$
L_gu:=\Delta_g u -c(m)R_g u.
$$
Let $g=u^{\frac{4}{m-2}}g_0$ for some $u>0$. It is well known that by rescaling the time variable equation \eqref{S4: Yamabe flow eq1} is equivalent to 
\begin{equation*}
\left\{\begin{aligned}
\partial_t u^{\frac{m+2}{m-2}}&=\frac{m+2}{m-2}L_0 u; \\
u(0)&=u_0,&&
\end{aligned}\right.
\end{equation*}
where $L_0:=L_{g_0}$ and $u_0$ is a positive function. See \cite[formula~(7)]{MaCheZhu12}. It is equivalent to solving the following equation:
\begin{equation}
\label{S4: Yamabe flow eq2}
\left\{\begin{aligned}
\partial_t u &=u^{-\frac{4}{m-2}}L_0 u; \\
u(0)&=u_0. &&
\end{aligned}\right.
\end{equation}
A well-known formula of scalar curvature in local coordinates yields
\begin{align*}
R_g=\frac{1}{2}g^{ki}g^{lj}(g_{jk,li}+g_{il,kj}-g_{jl,ki}-g_{ik,lj}).
\end{align*}
(P2) implies that 
$$\Rc R_{g_0} \in l_\infty^2(\boldsymbol{BC}^k(\R)), $$
for any $k\in \Nz$. 
By Proposition~\ref{S2: retraction&coretraction}, we infer that
\begin{align}
\label{S4.2: reg of R_0}
R_{g_0}\in BC^{\infty,2}(\M).
\end{align}
Put 
$$P(u)h:=-u^{-\frac{4}{m-2}}\Delta_{g_0}h ,\quad Q(u):=-c(m) u^{\frac{m-6}{m-2}}R_{g_0} .$$

Given any $0<s<1$, we choose $0<\alpha<s$, $\gamma=(s-\alpha)/2$. 
Let $\vartheta=(m-2)/2$ and
$$E_0:=bc^{\alpha,\vartheta}(\M),\quad E_1:=bc^{2+\alpha,\vartheta}(\M),\quad E_{\gamma}:=(E_0,E_1)^{0}_{\gamma,\infty}.$$
Then by Proposition~\ref{S2: interpolation}, $E_\gamma \doteq bc^{s,\vartheta}(\M)$. Put 
$$U^s_\vartheta=\{u\in E_{\gamma}:\inf\rho^{\vartheta} u>0\}.$$
In view of \eqref{S4.2: reg of R_0}, it follows from an analogous discussion as in \eqref{S4.1: P-reg} and \eqref{S4.1: Q-reg} that
\begin{align}
\label{S4: P-Q-reg}
P \in C^{\omega}(U^s_\vartheta,\L(E_1,E_0)),\quad Q\in C^{\omega}(U^s_\vartheta,E_0).
\end{align}
A similar computation as in \eqref{S4.1: MR-PME} yields
\begin{equation}
\label{S4.2 Yamabe-MR}
P(u)\in \mathcal{M}_{\gamma}(E_1,E_0),\quad u\in U^s_\vartheta.
\end{equation}
\begin{theorem}
\label{S4.1: Thm-Yamabe}
Suppose that $u_0\in U^s_\vartheta:=\{u\in bc^{s,\vartheta}(\M):\inf\rho^{\vartheta} u>0 \}$ with $0<s<1$, and $\vartheta=(m-2)/2$. Then for every fixed $\alpha\in (0,s)$, equation \eqref{S4: Yamabe flow eq2} has a unique local positive solution 
\begin{align*}
\hat{u}\in C^1_{1-\gamma}(J(u_0),bc^{\alpha,\vartheta}(\M))\cap C_{1-\gamma}(J(u_0),bc^{2+\alpha,\vartheta}(\M)) \cap C(J(u_0),U^s_\vartheta)
\end{align*}
existing on $J(u_0):=[0,T(u_0))$ for some $T(u_0)>0$ with $\gamma=(s-\alpha)/2$. Moreover, 
$$\hat{g}\in C^{\infty}(\dot{J}(u_0)\times \M, V^0_2).$$
In particular, if the metric $g_0/\rho^2$ is real analytic, then
$$\hat{g}\in C^\omega(\dot{J}(u_0)\times \M, V^0_2).$$
\end{theorem}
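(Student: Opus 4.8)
The plan is to treat equation~\eqref{S4: Yamabe flow eq2} as a quasilinear parabolic problem $\partial_t u + P(u)u = Q(u)$ on the Banach couple $(E_1,E_0) = (bc^{2+\alpha,\vartheta}(\M), bc^{\alpha,\vartheta}(\M))$, exactly in the spirit of the porous medium argument in Theorem~\ref{S4.1: Thm-porous}. First I would verify that the reformulation is correct: writing $g = u^{4/(m-2)}g_0$ and using $L_0 u = \Delta_{g_0}u - c(m)R_{g_0}u$, a direct check shows $\partial_t u = u^{-4/(m-2)}L_0 u$ is equivalent to the rescaled Yamabe flow, so with $P(u)h := -u^{-4/(m-2)}\Delta_{g_0}h$ and $Q(u) := -c(m)u^{(m-6)/(m-2)}R_{g_0}$ the evolution equation becomes $\partial_t u + P(u)u = Q(u)$, $u(0) = u_0$. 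The regularity input \eqref{S4: P-Q-reg}, the maximal regularity property \eqref{S4.2 Yamabe-MR}, and the fact that $U^s_\vartheta$ is open in $E_\gamma \doteq bc^{s,\vartheta}(\M)$ are precisely the hypotheses of \cite[Theorem~4.1]{CleSim01}, which yields a unique maximal solution $\hat u \in C^1_{1-\gamma}(J(u_0),E_0) \cap C_{1-\gamma}(J(u_0),E_1) \cap C(J(u_0),U^s_\vartheta)$ on some $J(u_0) = [0,T(u_0))$; short-time positivity of $\hat u$ (hence of $\rho^\vartheta\hat u$, hence of the metric) follows from continuity into $U^s_\vartheta$ since $u_0 \in U^s_\vartheta$.

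For the smoothness statement I would argue as in the proof of Theorem~\ref{S4.1: Thm-porous}. Set $v := \rho^\vartheta \hat u$ and $\hat g := g_0/\rho^2$; multiplying \eqref{S4: Yamabe flow eq2} by $\rho^\vartheta$ and expanding $\rho^\vartheta(u^{-4/(m-2)}L_0 u)$ in terms of $\hat g$, one obtains a quasilinear parabolic equation for $v$ on the uniformly regular Riemannian manifold $(\M,\hat g)$ with coefficients that are smooth functions of $v$ and of the fixed data $R_{\hat g}$, $\gd_{\hat g}\log\rho$, etc., all of which lie in $BC^\infty(\M,\hat V)$ by Proposition~\ref{S2.3: unweighted spaces} together with (S3), (P1)--(P3) — the same bookkeeping used in \eqref{S4.2: reg of R_0} and in Theorem~\ref{S4.1: Thm-porous}. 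One checks that the leading operator $P(v) = -\,(\text{positive})\,\Delta_{\hat g}$ is normally elliptic in the sense of \cite[Section~3]{ShaoSim13}. Then a bootstrap via the parameter-dependent diffeomorphism technique of \cite{Shao13} (differentiating the equation with respect to the time and space variables after a localization and scaling, and feeding the gained regularity back into the quasilinear coefficients) gives $v \in C^\infty(\dot J(u_0)\times\M)$, hence $\hat u = \rho^{-\vartheta}v \in C^\infty(\dot J(u_0)\times\M)$, and therefore $\hat g = \hat u^{4/(m-2)}g_0 \in C^\infty(\dot J(u_0)\times\M, V^0_2)$.

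For the final real-analytic assertion, the additional hypothesis is that $g_0/\rho^2 = \hat g$ is real analytic. The idea is to upgrade the bootstrap to the analytic category: one applies the analytic-dependence/parameter-trick machinery (a scaling parameter $\lambda$ acting on the time variable plus a one-parameter family of translations, combined with the implicit function theorem for the solution operator of the quasilinear problem, exactly as in the real-analytic parts of \cite{ShaoSim13} and \cite{Shao13}) — this works because all the structural coefficients entering the localized equation are now real-analytic in the spatial variables, the nonlinearities $P,Q$ depend real-analytically on $u$ by \eqref{S4: P-Q-reg} (they are compositions of the real-analytic power maps with smooth tensor operations), and parabolic smoothing gives analyticity in time on $\dot J(u_0)$. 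Hence $\hat g \in C^\omega(\dot J(u_0)\times\M, V^0_2)$.

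The main obstacle is the regularity/analyticity bootstrap rather than the existence step. Existence and uniqueness are a black-box application of \cite[Theorem~4.1]{CleSim01} once \eqref{S4: P-Q-reg} and \eqref{S4.2 Yamabe-MR} are in hand. The delicate point is organizing the $\rho$-weighted change of variables $v = \rho^\vartheta\hat u$ so that the transformed equation genuinely lives on the uniformly regular manifold $(\M,\hat g)$ with uniformly bounded, sufficiently regular coefficients — this requires careful use of (S3), (S4), (P1)--(P3) and Proposition~\ref{S2.3: unweighted spaces} to control every factor of $\rho$, $\gd\rho$, $\Delta\rho$ — and then verifying that the hypotheses of the parameter-dependent diffeomorphism scheme of \cite{Shao13} are met uniformly across coordinate patches, in both the $C^\infty$ and the $C^\omega$ versions.
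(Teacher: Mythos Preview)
Your proposal is correct and follows essentially the same approach as the paper: existence and uniqueness via \eqref{S4: P-Q-reg}, \eqref{S4.2 Yamabe-MR} and \cite[Theorem~4.1]{CleSim01}, and the smoothness/analyticity by passing to $v=\rho^\vartheta\hat u$ on the uniformly regular manifold $(\M,\hat g)$ and invoking the parameter-dependent diffeomorphism technique of \cite{Shao13}, exactly as in the proof of Theorem~\ref{S4.1: Thm-porous}. Your write-up is in fact more detailed than the paper's two-line proof, which simply points to these ingredients.
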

\begin{proof}
Local existence and uniqueness is a direct consequence of \eqref{S4: P-Q-reg}, \eqref{S4.2 Yamabe-MR}, and \cite[Theorem~4.1]{CleSim01}. The  regularity part follows in a similar way to the proof of Theorem~\ref{S4.1: Thm-porous}.
\end{proof}
\begin{remark}
The scalar of the initial metric $g^0= u_0^{\frac{4}{m-2}} g_0$ is related that of the background singular metric $g_0$ in the following manner:
\begin{align}
\label{S2: scalar cur-conformal change}
R_{g_0} = -\frac{4(m-1)}{m-2} u_0^{-\frac{m+2}{m-2}} L_{\tilde{g}} u.
\end{align}
We may take $\rho={\bf 1}_\M$ for computational brevity, i.e., $(\M,g_0)$ to be {\em uniformly regular}. Then there is some $C>1$ such that 
$$1/C\leq \|u_0^{-\frac{m+2}{m-2}}\|_\infty \leq C,\quad \|R_{g_0}\|_\infty \leq C.$$ 
But at the same time, there are ample examples of $u_0 \in U^s_\vartheta$ with unbounded derivatives.
In view of formula~\eqref{S2: scalar cur-conformal change}, 
it is not hard to create $g^0$ with unbounded scalar curvature. 
Therefore, the Yamabe flow can admit a unique smooth solution while starting at a metric with unbounded curvature, and these solutions evolve into one with bounded curvature instantaneously.

The initial metric $g^0 = u_0^{\frac{4}{m-2}} g_0$ in the above theorem can have unbounded scalar curvature. To make this already long paper not any longer, we will give more details on this observation elsewhere.
\end{remark}

\subsection{\bf The evolutionary $p$-Laplacian equation}
In this subsection, we investigate the well-posedness of the following evolutionary $p$-Laplacian equation on a singular manifold $(\M,g;\rho)$.
\begin{equation}
\label{S4: p-Lap-eq}
\left\{\begin{aligned}
\partial_t u -\div (|\gd u|_g^{p-2}\gd u)&=f;  \\
u(0)&=u_0 . &&
\end{aligned}\right.
\end{equation}
Here $p>1$ with $p\neq 2$, and $\gd=\gd_g$, $\div=\div_g$.
One computes
\begin{align*}
\div (|\gd u|_g^{p-2}\nabla u)&=|\gd u|_g^{p-2} \Delta u + (p-2)|\gd u|_g^{p-4} \ev((\gd u)^{\otimes 2}, \nabla^2 u)\\
&=|\gd u|_g^{p-4} \ev(|\gd u|^2 g^* +(p-2)(\gd u)^{\otimes 2}, \nabla^2 u).
\end{align*}
Let
$$\vec{a}(u):= -|\gd u|_g^{p-4} (|\gd u|^2 g^* +(p-2)(\gd u)^{\otimes 2}).$$
For any $0<s<1$, we put $\vartheta=p/(2-p)$ and
$$E_0:=bc^{s,\vartheta}(\M),\quad E_1:=bc^{2+s,\vartheta}(\M),\quad E_{1/2}:=(E_0,E_1)^{0}_{1/2,\infty}.$$
Proposition~\ref{S2: interpolation} implies $E_{1/2}\doteq bc^{1+s,\vartheta}(\M)$. Let 
$$U^{1+s}_\vartheta:=\{u\in E_{1/2}:\inf\rho^{\vartheta+1}|\gd u|_g>0\}.$$ 
This is an open subset of $E_{1/2}$. 

We infer from \eqref{S4: power of u} and \eqref{S4: u-|u|} that
$$[u\mapsto |\gd u|_g^{p-2}]\in C^\omega(U^{1+s}_\vartheta, bc^{s,-2}(\M)), $$
and from \cite[Example~13.4(b)]{AmaAr}, Proposition~\ref{S2: pointwise multiplication} and \cite[Proposition~2.5]{ShaoPre} that
$$[u\mapsto |\gd u|_g^{p-4}(\gd u)^{\otimes 2}]\in C^\omega(U^{1+s}_\vartheta, bc^{s,0}(\M,V^2_0) ). $$
In virtue of \eqref{S4: reg of g^*} and Proposition~\ref{S2: pointwise multiplication}, we have
\begin{align}
\label{S4.3: vec a-reg}
[u\mapsto \vec{a}(u)]\in C^\omega(U^{1+s}_\vartheta,bc^{s,0}(\M,V^2_0)).
\end{align}
The principal symbol can be computed as in Section~4.1.
\begin{align*}
& \ev(\vec{a}(u), (-i\xi)^{\otimes 2})(\p)\\
=&  |\gd u(\p)|_{g(\p)}^{p-2} |\xi(\p)|_{g^*(\p)}^2 +(p-2)|\gd u(\p)|_{g(\p)}^{p-4}[\ev(\gd u, \xi)(\p)]^2\\
=&  |\gd u(\p)|_{g(\p)}^{p-2} |\xi(\p)|_{g^*(\p)}^2 +(p-2)|\gd u(\p)|_{g(\p)}^{p-4}(\nabla u (\p)| \xi(\p))^2_{g^*(\p)} .
\end{align*}
For $p>2$, one checks  for any $\xi\in \Gamma(\M, T^*\M)$
\begin{align*}
 \ev(\vec{a}(u), (-i\xi)^{\otimes 2})(\p)
&\geq  |\gd u(\p)|_{g(\p)}^{p-2} |\xi(\p)|^2_{g^*(\p)}\\
& \geq (\inf \rho^{\vartheta+1}|\gd u|_g)^{p-2} \rho^2(\p) |\xi(\p)|^2_{g^*(\p)},
\end{align*}
and for $1<p<2$
\begin{align*}
&\quad \ev(\vec{a}(u), (-i\xi)^{\otimes 2})(\p)\\
&\geq |\gd u(\p)|_{g(\p)}^{p-2} |\xi(\p)|_{g^*(\p)}^2 +(p-2)|\gd u(\p)|_{g(\p)}^{p-2} |\xi(\p)|_{g^*(\p)}^2\\
&= (p-1)|\gd u(\p)|_{g(\p)}^{p-2} |\xi(\p)|^2_{g^*(\p)}\\
& \geq (p-1) (\sup \rho^{\vartheta+1}|\gd u|_g)^{p-2} \rho^2(\p) |\xi(\p)|^2_{g^*(\p)},
\end{align*}
holds for all $u\in U^{1+s}_\vartheta.$ In the second step, we have used the Cauchy-Schwarz inequality. 
Therefore, $\ev(\vec{a}(u), \nabla^2 \cdot)$ is {\em normally $\rho$-elliptic} for every $u\in U^{1+s}_\vartheta$.

\begin{theorem}
\label{S4.3: Thm-p-Lap}
Suppose that $u_0\in U^{1+s}_\vartheta:=\{u\in bc^{1+s,\vartheta}(\M):\inf\rho^{\vartheta+1}|\gd u|_g>0\}$ with $0<s<1$, $\vartheta=p/(2-p)$, and $f\in bc^{s,\vartheta}(\M)$. Then equation~\eqref{S4: p-Lap-eq} has a unique local solution 
\begin{align*}
\hat{u}\in C^1_{1/2}(J(u_0),bc^{s,\vartheta}(\M))\cap C_{1/2}(J(u_0),bc^{2+s,\vartheta}(\M)) \cap C(J(u_0),U^{1+s}_\vartheta)
\end{align*}
existing on $J(u_0):=[0,T(u_0))$ for some $T(u_0)>0$. Moreover,
$$\hat{u}\in C^{\infty}(\dot{J}(u_0)\times \M).$$
\end{theorem}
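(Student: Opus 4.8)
The plan is to recast \eqref{S4: p-Lap-eq} as a quasilinear abstract parabolic problem
$$
\partial_t u + \cA(u)u = F(u), \qquad u(0)=u_0,
$$
on the Banach couple $E_1 = bc^{2+s,\vartheta}(\M) \overset{d}{\hookrightarrow} E_0 = bc^{s,\vartheta}(\M)$, and then invoke the quasilinear existence-uniqueness machinery of \cite[Theorem~4.1]{CleSim01} together with the continuous maximal regularity supplied by Theorem~\ref{S3: MR}. Here the natural choice is
$$
\cA(u) := -\ev(\vec{a}(u), \nabla^2 \cdot), \qquad F(u) := f,
$$
so that the equation, via the divergence identity already computed, becomes $\partial_t u + \cA(u)u = f$.

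First I would verify the regularity of the coefficient map $u \mapsto \cA(u)$. Combining \eqref{S4.3: vec a-reg}, which gives $[u\mapsto \vec a(u)]\in C^\omega(U^{1+s}_\vartheta, bc^{s,0}(\M,V^2_0))$, with Proposition~\ref{S3: diff-op} (applied with $l=2$, weight $\vartheta$, and the zeroth-order coefficient being $\vec a(u)$, which lies in $bc^s(\M,V^{\cdot}_{\cdot})$ since its $0$-weight matches the $s$-regularity hypothesis) and the continuity of composition with a bounded linear map, one gets
$$
[u\mapsto \cA(u)] \in C^\omega\bigl(U^{1+s}_\vartheta, \L(E_1,E_0)\bigr).
$$
Next, the normal $\rho$-ellipticity of $\ev(\vec a(u),\nabla^2\cdot)$ established in the two displayed estimates just above the theorem (the $p>2$ and $1<p<2$ cases, using Cauchy--Schwarz and the defining open condition $\inf\rho^{\vartheta+1}|\gd u|_g>0$, respectively $\sup\rho^{\vartheta+1}|\gd u|_g<\infty$ on bounded sets) lets me apply Theorem~\ref{S3: MR} with $l=1$ to conclude
$$
\cA(u) \in \cM_\gamma(E_1,E_0), \qquad u\in U^{1+s}_\vartheta,
$$
for any $\gamma\in(0,1]$; in particular for $\gamma=1/2$, which matches $E_{1/2}\doteq bc^{1+s,\vartheta}(\M)$. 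Since $f\in bc^{s,\vartheta}(\M)=E_0$ is constant in $u$, the forcing term $F\equiv f$ is trivially $C^\omega(U^{1+s}_\vartheta,E_0)$. Then \cite[Theorem~4.1]{CleSim01} yields a unique maximal solution
$$
\hat u\in C^1_{1/2}(J(u_0),E_0)\cap C_{1/2}(J(u_0),E_1)\cap C(J(u_0),U^{1+s}_\vartheta)
$$
on some $J(u_0)=[0,T(u_0))$, which is precisely the asserted regularity class.

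For the smoothness statement $\hat u\in C^\infty(\dot J(u_0)\times\M)$, I would follow the strategy used in the proof of Theorem~\ref{S4.1: Thm-porous}: pass to the rescaled unknown $v:=\rho^{\vartheta}\hat u$ on the uniformly regular Riemannian manifold $(\M,\hat g)$ with $\hat g=g/\rho^2$, using Proposition~\ref{S2.3: unweighted spaces} and Proposition~\ref{S2: change of wgt} to transfer the weighted little-H\"older framework to the unweighted one, rewrite the equation for $v$ as a quasilinear problem with $bc$-coefficients that is normally elliptic in the sense of \cite[Section~3]{ShaoSim13}, and then apply the parameter-dependent diffeomorphism/bootstrap technique of \cite{Shao13} to obtain joint space-time smoothness on $\dot J(u_0)\times\M$; transferring back gives $\hat u\in C^\infty(\dot J(u_0)\times\M)$.

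The main obstacle I expect is the coefficient-regularity step, specifically checking that $u\mapsto\vec a(u)$ genuinely maps into the right weighted little-H\"older space and is real-analytic there: the expression $\vec a(u)=-|\gd u|_g^{p-4}(|\gd u|_g^2 g^*+(p-2)(\gd u)^{\otimes2})$ involves a non-integer power $|\gd u|_g^{p-4}$ of the gradient norm, which is only well-behaved because of the open condition $\inf\rho^{\vartheta+1}|\gd u|_g>0$ (keeping $|\gd u|_g$ bounded away from zero). Making the bookkeeping of weights consistent — that $|\gd u|_g^{p-2}$ lands in $bc^{s,-2}(\M)$ and $|\gd u|_g^{p-4}(\gd u)^{\otimes2}$ in $bc^{s,0}(\M,V^2_0)$, so that after multiplying by $g^*\in BC^{\infty,2}$ everything combines into $bc^{s,0}$ — is the delicate point, but it is handled by the chain of results \eqref{S4: power of u}, \eqref{S4: u-|u|}, \cite[Example~13.4(b)]{AmaAr}, \cite[Proposition~2.5]{ShaoPre} and the pointwise multiplier Proposition~\ref{S2: pointwise multiplication}, exactly as assembled in the lines preceding the theorem; once that is in place the remainder is a routine application of the abstract theory.
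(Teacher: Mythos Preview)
Your proposal is correct and follows essentially the same approach as the paper, which merely says ``the assertion follows in a similar way to the proof of Theorem~\ref{S4.1: Thm-porous}''; you have faithfully unpacked that reference by invoking \eqref{S4.3: vec a-reg}, the normal $\rho$-ellipticity computation preceding the theorem, Theorem~\ref{S3: MR}, and \cite[Theorem~4.1]{CleSim01} for existence and uniqueness, and then the rescaling $v=\rho^{\vartheta}\hat u$ together with the technique of \cite{Shao13} for the smoothing statement. One small wording slip: $\vec a(u)$ is the \emph{top}-order (second-order) coefficient, not the ``zeroth-order'' one; the substance of your argument is unaffected.
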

\begin{proof}
The assertion follows in a similar way to the proof of Theorem~\ref{S4.1: Thm-porous}.
\end{proof}

\begin{remark}
\label{S4.3: boundary-blowup}
Suppose that we take $(\M,g;\rho)$ to be $(\Omega,g_m; {\sf d}^\beta)$ with $\beta \geq 1$ for some $C^4$-domain $\Omega\subset\R^m$ with compact boundary.  Let $\vartheta=p /(2-p)$. 
If we define 
$$U^{1+s}_{\vartheta}:=\{u\in bc^{1+s,\vartheta}_\beta(\Omega):\inf{\sf d}^{(\vartheta+1)\beta}|D u|>0,\, \inf{\sf d}^{\beta\vartheta} u>0\},$$ 
then Theorem~\ref{S4.3: Thm-p-Lap} still holds true, where $D$ is the gradient with respect to the metric $g_m$. Then there exists a positive continuous function $c(t)$ in $J:=J(u_0)$ such that
$$ {\sf d}^{\beta\vartheta}(x) \hat{u}(t,x) \geq c(t),\quad t\in J.  $$
In particular, the  above inequality shows that, for $1<p<2$, as $x\to \partial\Omega$
$$\hat{u}(t,x)\geq c(t) {\sf d}^{\frac{p\beta}{p-2}}(x)\to \infty, \quad t\in J. $$
This validates the assertion about equation~\eqref{S1: boundary blowup} in Section~1.
\end{remark}

\subsection{\bf The thin film equation on domains}
Suppose that $\Omega\subset \R^m$ is a $C^6$-domain with compact boundary. Then by the discussion in Section~3.2, $(\Omega,g_m ; {\sf d}^\beta)$ with $\beta\geq 1$ is a singular manifold, where ${\sf d}$ is defined in \eqref{S3.2: rescaled dist}. We consider the following thin film equation with $n>0$  and  degenerate boundary condition. Physically, the power exponent is determined by the flow condition at the liquid-solid interface, and is usually constrained to $n\in (0,3]$. Since the other choices of $n$ make no difference in our theory, $n\in [3,\infty)$ is also included herein.
\begin{equation}
\label{S4: thin-film-eq}
\left\{\begin{aligned}
\partial_t u +\div (u^n D \Delta u +\alpha_1 u^{n-1}\Delta u D u +\alpha_2 u^{n-2} |D u|^2 D u)&=f  &&\text{on}&&\Omega_T; \\
u(0)&=u_0  &&\text{on }&&\Omega .&&
\end{aligned}\right.
\end{equation}
Here $\alpha_1,\alpha_2$ are two constants, and $D$ denotes the gradient in $\R^m$.
An easy computation shows that
\begin{align*}
&\quad \div (u^n D \Delta u +\alpha_1 u^{n-1}\Delta u D u +\alpha_2 u^{n-2} |D u|^2 D u)\\
&= u^n \Delta^2 u + (n +\alpha_1) u^{n-1} (D u| D \Delta u)_{g_m} +\alpha_1 u^{n-1} (\Delta u)^2\\
&\quad   + [\alpha_1(n-1) +\alpha_2] u^{n-2}|D u|^2 \Delta u +\alpha_2(n-2)u^{n-3} |D u|^4\\
&\quad + 2\alpha_2 u^{n-2}(\nabla^2 u Du | Du)_{g_m}.
\end{align*}
For any $0<s <1$, take $\vartheta=-4/n$ 
$$E_0:=bc^{s,\vartheta}_\beta(\Omega),\quad E_1:=bc^{4+s,\vartheta}_\beta(\Omega),\quad E_{1/2}=(E_0,E_1)^0_{1/2,\infty}.$$
Then $ E_{1/2}\doteq bc^{2+s,\vartheta}_\beta(\Omega)$. 
Let $U^{2+s}_\vartheta:=\{u\in E_{1/2}: \inf {\sf d}^{\beta\vartheta} u>0\}$.  For any $u\in U^{2+s}_\vartheta$ and $v\in E_1$, we define
\begin{align*}
P(u)v:=&u^n \Delta^2 v + (n +\alpha_1) u^{n-1} (D u| D \Delta v)_{g_m} +\alpha_1 u^{n-1} \Delta u \Delta v\\
& + [\alpha_1(n-1) +\alpha_2] u^{n-2}|D u|^2 \Delta v +\alpha_2(n-2)u^{n-4} |D u|^4 v\\
&+ 2\alpha_2 u^{n-2}(\nabla^2 v Du | Du)_{g_m}.
\end{align*}
It follows from a similar argument as in Section~4.1 that
$$P\in C^\omega(U^{2+s}_\vartheta, \L(E_1,E_0)) $$
and for every $u\in U^{2+s}_\vartheta$, the principal symbol of $P(u)$ can be computed as
\begin{align*}
\hat{\sigma}P(u)(x,\xi)&=u^n(x) (g_m((-i\xi),(-i\xi)))^2\\
& = {\sf d}^{4\beta}(x) ({\sf d}^\vartheta u)^n(x)|\xi|^4 
 \geq (\inf{\sf d}^\vartheta u )^n {\sf d}^{4\beta}(x)|\xi|^4.
\end{align*}
Thus $P(u)$ is {\em normally $\rho$-elliptic}.
\begin{theorem}
\label{S4.4: Thm-thin-film}
Given any $\beta\geq 1$, 
suppose that $u_0\in U^{2+s}_\vartheta:=\{u\in bc^{2+s,\vartheta}_\beta(\Omega):\inf{\sf d}^{\beta\vartheta} u>0 \}$ with $0<s<1$, $\vartheta=-4/n$. Then for every $f\in bc^{s,\vartheta}_\beta(\Omega)$, equation \eqref{S4: thin-film-eq} has a unique local  solution 
\begin{align*}
\hat{u}\in C^1_{1/2}(J(u_0),bc^{s,\vartheta}_\beta(\Omega))\cap C_{1/2}(J(u_0),bc^{4+s,\vartheta}_\beta(\Omega)) \cap C(J(u_0),U^{2+s}_\vartheta)
\end{align*}
existing on $J(u_0):=[0,T(u_0))$ for some $T(u_0)>0$. Moreover,
$$\hat{u}\in C^{\infty}(\dot{J}(u_0)\times \Omega).$$
\end{theorem}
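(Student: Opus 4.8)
The plan is to follow the same blueprint used in the proof of Theorem~\ref{S4.1: Thm-porous}, recasting \eqref{S4: thin-film-eq} as a quasilinear abstract Cauchy problem $\partial_t u + P(u)u = Q(u) + f$ on the singular manifold $(\M,g;\rho):=(\Omega,g_m;{\sf d}^\beta)$, where $Q(u):=-\div(u^nD\Delta u+\alpha_1u^{n-1}\Delta uDu+\alpha_2u^{n-2}|Du|^2Du)+P(u)u$ collects all the lower-order terms. First I would verify that $Q$ is $C^\omega$ from $U^{2+s}_\vartheta$ into $E_0$: each summand of $Q(u)$ is a product of a fixed power $u^\beta$ (analytic from $U^{2+s}_\vartheta$ into $bc^{s,\beta\vartheta}_\beta(\Omega)$ by the analogue of \eqref{S4: power of u}, which rests on \cite[Proposition~6.3]{ShaoSim13} and Proposition~\ref{S2: change of wgt}), gradients/Hessians of $v$ handled by Proposition~\ref{S2: nabla} and the weighted mapping property \eqref{S4: grad}, and the Euclidean metric $g_m$, which lies in $BC^{\infty,2}(\M,V^2_0)$ by \eqref{S4: reg of g^*}; then Proposition~\ref{S2: pointwise multiplication} assembles these with the weight exponents adding correctly to $\vartheta$. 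The same bookkeeping, together with \cite[Proposition~1]{Brow62} for the highest-order term, gives $P\in C^\omega(U^{2+s}_\vartheta,\L(E_1,E_0))$.

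Next I would record the fact, already established in the lines preceding the theorem, that for each $u\in U^{2+s}_\vartheta$ the fourth-order operator $P(u)$ has principal symbol ${\sf d}^{4\beta}(x)({\sf d}^\vartheta u)^n(x)|\xi|^4\ge(\inf{\sf d}^\vartheta u)^n{\sf d}^{4\beta}(x)|\xi|^4$, hence $P(u)$ is \emph{uniformly strongly $\rho$-elliptic} of order $2l=4$ and \emph{$s$-regular} (the coefficients belonging to the appropriate weighted little H\"older spaces is exactly what the $C^\omega$-regularity of $P$ encodes). Theorem~\ref{S3: MR} — valid here since by Remark~\ref{S3: MR-lower reg} a $C^{4+[s]+1}=C^6$-singular manifold suffices, and $\Omega$ is assumed $C^6$ — then yields $P(u)\in\cM_{1/2}(E_1,E_0)$ for every $u\in U^{2+s}_\vartheta$, with $\gamma=1/2$ corresponding to the interpolation space $E_{1/2}\doteq bc^{2+s,\vartheta}_\beta(\Omega)$ that carries the initial data. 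With the triple $(P,Q,\cM_{1/2})$ in hand, \cite[Theorem~4.1]{CleSim01} delivers the unique maximal solution $\hat u\in C^1_{1/2}(J(u_0),E_0)\cap C_{1/2}(J(u_0),E_1)\cap C(J(u_0),U^{2+s}_\vartheta)$ on some $J(u_0)=[0,T(u_0))$.

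For the smoothness claim $\hat u\in C^\infty(\dot J(u_0)\times\Omega)$, I would argue exactly as in the corresponding part of Theorem~\ref{S4.1: Thm-porous}: pass to the rescaled unknown $v:={\sf d}^{\beta\vartheta}\hat u$ on the uniformly regular manifold $(\Omega,\hat g)$ with $\hat g=g_m/{\sf d}^{2\beta}$, where the conjugated equation has coefficients in ordinary (unweighted) little H\"older spaces by Proposition~\ref{S2.3: unweighted spaces} and the weighted-to-unweighted dictionary, its leading operator is \emph{normally elliptic} in the sense of \cite[Section~3]{ShaoSim13}, and then invoke the parameter-dependent diffeomorphism bootstrap of \cite{Shao13} to upgrade $v$, hence $\hat u$, to $C^\infty$ in $\dot J(u_0)\times\Omega$; a parabolic regularity iteration in the time-weighted spaces promotes spatial smoothness to joint space-time smoothness.

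I expect the main obstacle to be the verification, with fully correct weight exponents, that $P$ and $Q$ map analytically into $\L(E_1,E_0)$ and $E_0$ respectively — in particular that the choice $\vartheta=-4/n$ is the \emph{unique} one making every nonlinear term land in $bc^{s,\vartheta}_\beta$, since the terms $u^{n-4}|Du|^4v$, $u^{n-1}(Du|D\Delta v)$, $u^{n-2}(\nabla^2vDu|Du)$ etc.\ each consume a different number of gradient-weight shifts (each $D$ costing a factor ${\sf d}^\beta$ in the weight via \eqref{S4: grad}) and a different power of $u$ (each $u$ contributing weight $\vartheta$), and all of these must cancel against the ${\sf d}^{4\beta}$ coming from the fourth-order part; once the exponent arithmetic is checked, the analyticity itself is routine from Propositions~\ref{S2: pointwise multiplication}, \ref{S2: nabla}, \ref{S2: change of wgt} and the composition results cited above. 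The remaining subtlety is ensuring $U^{2+s}_\vartheta$ is genuinely open in $E_{1/2}$ and nonempty for the given class of initial data, which follows since $u\mapsto\inf{\sf d}^{\beta\vartheta}u=\inf\rho^\vartheta u$ is continuous on $bc^{2+s,\vartheta}_\beta(\Omega)$ by Proposition~\ref{S2: change of wgt}.
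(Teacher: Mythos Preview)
Your proposal is correct and follows essentially the same route as the paper, which simply says the proof is the same as that of Theorem~\ref{S4.1: Thm-porous} except that Theorem~\ref{S3: MR-domain} is invoked in place of Theorem~\ref{S3: MR}. You cite Theorem~\ref{S3: MR} directly; this is harmless here because Theorem~\ref{S3: MR-domain} is itself the specialization of Theorem~\ref{S3: MR} to $(\Omega,g_m;{\sf d}^\beta)$ via Proposition~\ref{S3.2: domain is Sing mnfd} and Remark~\ref{S3: MR-lower reg}, and on $(\Omega,g_m)$ the flat connection makes the two formulations of $s$-regularity coincide. One small slip: Remark~\ref{S3: MR-lower reg} requires only a $C^{2l+[s]+1}=C^5$-singular manifold (not $C^6$); the $C^6$ hypothesis on $\Omega$ enters because a $C^k$-domain yields a $C^{k-1}$-singular manifold by Proposition~\ref{S3.2: domain is Sing mnfd}.
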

\begin{proof}
The proof is essentially the same as that for Theorem~\ref{S4.1: Thm-porous} except that we use Theorem~\ref{S3: MR-domain} instead of Theorem~\ref{S3: MR}.
\end{proof}
In the case $\alpha_1=0$, we can admit lower regularity for the initial data.
\begin{cor}
\label{S4.4: cor-thin-film}
Given any $\beta\geq 1$, 
suppose that $u_0\in U^{1+s}_\vartheta=\{u\in bc^{1+s,\vartheta}_\beta(\Omega):\inf{\sf d}^{\beta\vartheta} u>0 \}$ with $0<s<1$, $\vartheta=-4/n$. Then for every $f\in bc^{s,\vartheta}_\beta(\Omega)$, equation \eqref{S4: thin-film-eq} has a unique local solution 
\begin{align*}
\hat{u}\in C^1_{3/4}(J(u_0),bc^{s,\vartheta}_\beta(\Omega))\cap C_{3/4}(J(u_0),bc^{4+s,\vartheta}_\beta(\Omega)) \cap C(J(u_0),U^{1+s}_\vartheta)
\end{align*}
existing on $J(u_0):=[0,T(u_0))$. Moreover,
$$\hat{u}\in C^{\infty}(\dot{J}(u_0)\times \Omega).$$
\end{cor}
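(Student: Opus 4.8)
The plan is to rerun the quasilinear scheme used for Theorem~\ref{S4.4: Thm-thin-film}, but with the interpolation parameter $\gamma$ lowered from $1/2$ to $1/4$; this is exactly the adjustment that trades two derivatives of regularity on the initial datum against the heavier time weight $1-\gamma=3/4$. Concretely, I would keep $E_0:=bc^{s,\vartheta}_\beta(\Omega)$ and $E_1:=bc^{4+s,\vartheta}_\beta(\Omega)$, set $\gamma=1/4$ and $E_\gamma:=(E_0,E_1)^0_{1/4,\infty}$; since $0<s<1$ forces $s,1+s,4+s\notin\N$, Proposition~\ref{S2: interpolation} identifies $E_\gamma\doteq bc^{1+s,\vartheta}_\beta(\Omega)$. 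Then $U^{1+s}_\vartheta=\{u\in E_\gamma:\inf{\sf d}^{\beta\vartheta}u>0\}$ is open in $E_\gamma$ by Proposition~\ref{S2: change of wgt}: the map $u\mapsto{\sf d}^{\beta\vartheta}u$ is an isomorphism onto a space embedding into $BC(\Omega)$, so the infimum depends continuously on $u$.

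Next I would revisit the quasilinear operator $P(u)$ defined just before Theorem~\ref{S4.4: Thm-thin-film} with $\alpha_1=0$. The crucial observation is that with $\alpha_1=0$ the term $\alpha_1 u^{n-1}\Delta u\,\Delta v$ disappears, so the coefficients of $P(u)$ involve $u$ only through $u^n$, $u^{n-1}Du$, $u^{n-2}|Du|^2$ and $u^{n-4}|Du|^4$ --- no second derivatives of $u$. I would then repeat the estimates of Sections~4.1 and 4.3 --- the singular-manifold analogue of \cite[Proposition~6.3]{ShaoSim13} for powers of a positive little-H\"older function, the pointwise multiplier result Proposition~\ref{S2: pointwise multiplication}, the weight-shift isomorphism Proposition~\ref{S2: change of wgt}, Proposition~\ref{S2: nabla}, formula~\eqref{S4: grad} and the differential-operator bound Proposition~\ref{S3: diff-op} --- to conclude that each of these coefficient expressions is real-analytic in $u\in U^{1+s}_\vartheta$ with values in the appropriate weighted little-H\"older space, hence $P\in C^\omega(U^{1+s}_\vartheta,\L(E_1,E_0))$ on the enlarged domain $U^{1+s}_\vartheta\subset bc^{1+s,\vartheta}_\beta(\Omega)$. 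This verification --- that dropping the $\Delta u$ coefficient really does permit $P$ to be defined at the $bc^{1+s}$ level --- is the only genuinely new point, and is where I expect the bookkeeping to be most delicate: one must track the various weights ($\vartheta$, $-2$, $0$) in the several factors and confirm little-H\"older, not merely H\"older, membership.

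The remaining steps are routine transcriptions of the proof of Theorem~\ref{S4.4: Thm-thin-film}. The principal symbol of $P(u)$ was already shown there to satisfy $\hat\sigma P(u)(x,\xi)={\sf d}^{4\beta}(x)({\sf d}^\vartheta u)^n(x)|\xi|^4\ge(\inf{\sf d}^\vartheta u)^n{\sf d}^{4\beta}(x)|\xi|^4$, so $P(u)$ is normally $\rho$-elliptic with $2l=4$; since $\Omega$ is a $C^6$-domain and $k=2l+[s]+2=6$, Theorem~\ref{S3: MR-domain} applies with $\gamma=1/4$ and yields $P(u)\in\mathcal{M}_{1/4}(E_1,E_0)$ for every $u\in U^{1+s}_\vartheta$. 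Writing \eqref{S4: thin-film-eq} with $\alpha_1=0$ in quasilinear form $\partial_t u+P(u)u=f$ with $f\in E_0$ fixed, I would invoke \cite[Theorem~4.1]{CleSim01} to obtain the unique local solution in $C^1_{1-\gamma}(J,E_0)\cap C_{1-\gamma}(J,E_1)\cap C(J,U^{1+s}_\vartheta)$ on $J=J(u_0)=[0,T(u_0))$, which with $\gamma=1/4$ is exactly the asserted class. Finally, $\hat u\in C^\infty(\dot J\times\Omega)$ follows word for word as in the proofs of Theorems~\ref{S4.1: Thm-porous} and \ref{S4.4: Thm-thin-film}: pass to $v={\sf d}^{\beta\vartheta}\hat u$ on the uniformly regular Riemannian manifold $(\Omega,g_m/{\sf d}^{2\beta})$, where $v$ solves a quasilinear parabolic problem meeting the hypotheses of \cite{Shao13}, and apply the parameter-dependent diffeomorphism bootstrap from that reference.
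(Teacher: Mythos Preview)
Your proposal is correct and is precisely the argument the paper has in mind: the corollary is stated without proof immediately after the sentence ``In the case $\alpha_1=0$, we can admit lower regularity for the initial data,'' and your observation that setting $\alpha_1=0$ eliminates the coefficient $\alpha_1 u^{n-1}\Delta u$ (the only place a second derivative of $u$ enters $P(u)$), together with the shift $\gamma=1/2\to\gamma=1/4$ so that $E_\gamma\doteq bc^{1+s,\vartheta}_\beta(\Omega)$, is exactly the intended reduction to the proof of Theorem~\ref{S4.4: Thm-thin-film}.
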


In some literature, a more general form of the thin film equation is considered with $u^n$ replaced by $\Psi(u)=u^n +\delta u^3$ with $\delta\geq 0$ and $n\in (0,3]$. The term $\delta u^3$ is sometimes omitted because it is relatively small compared to $u^n$ for $n<3$ near the free boundary $\supp[u(t,\cdot)]$.
\begin{equation}
\label{S4: thin-film-eq-general}
\left\{\begin{aligned}
\partial_t u +\div (\Psi(u) D \Delta u +\alpha_1 u^{n-1}\Delta u D u +\alpha_2 u^{n-2} |D u|^2 D u)&=f  &&\text{on}&&\Omega_T; \\
u(0)&=u_0  &&\text{on }&&\Omega. &&
\end{aligned}\right.
\end{equation}
For any $u\in U^{2+s}_\vartheta$, it is easy to check that $u^3 \in bc^{2+s,3\vartheta}_\beta(\Omega) \hookrightarrow  bc^{2+s,n\vartheta}_\beta(\Omega)$. Now the computations shown above for equation~\eqref{S4: thin-film-eq} still hold for the new system undoubtedly.
\begin{cor}
Suppose that the conditions in Theorem~\ref{S4.4: Thm-thin-film} are satisfied. Then equation \eqref{S4: thin-film-eq-general} has a unique local solution 
\begin{align*}
\hat{u}\in C^1_{1/2}(J(u_0),bc^{s,\vartheta}_\beta(\Omega))\cap C_{1/2}(J(u_0),bc^{4+s,\vartheta}_\beta(\Omega)) \cap C(J(u_0),U^{2+s}_\vartheta)
\end{align*}
existing on $J(u_0):=[0,T(u_0))$ for some $T(u_0)>0$. Moreover,
$$\hat{u}\in C^{\infty}(\dot{J}(u_0)\times \Omega).$$
\end{cor}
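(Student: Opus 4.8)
The plan is to repeat the proof of Theorem~\ref{S4.4: Thm-thin-film} almost verbatim, the only new point being that the leading coefficient $\Psi(u)=u^n+\delta u^3$ has on $U^{2+s}_\vartheta$ exactly the same mapping and ellipticity properties as $u^n$. First I would record the nonlinearity estimates on the singular manifold $(\Omega,g_m;{\sf d}^\beta)$: adapting \eqref{S4: power of u} as in the proof of Theorem~\ref{S4.1: Thm-porous} gives $[u\mapsto u^n]\in C^\omega(U^{2+s}_\vartheta, bc^{2+s,n\vartheta}_\beta(\Omega))$ and $[u\mapsto u^3]\in C^\omega(U^{2+s}_\vartheta, bc^{2+s,3\vartheta}_\beta(\Omega))$. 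Since $n\le 3$ and $\vartheta=-4/n<0$ force $(n-3)\vartheta\ge 0$, and since ${\sf d}^\beta$ is bounded on $\Omega$, one has the continuous embedding $bc^{2+s,3\vartheta}_\beta(\Omega)\hookrightarrow bc^{2+s,n\vartheta}_\beta(\Omega)$; hence $[u\mapsto\Psi(u)]\in C^\omega(U^{2+s}_\vartheta, bc^{2+s,n\vartheta}_\beta(\Omega))$, so $\Psi(u)$ lies in the same space as $u^n$ with analytic dependence on $u$.

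Next I would set up the quasilinear problem. Let $\tilde{P}(u)$ be obtained from the operator $P(u)$ of Theorem~\ref{S4.4: Thm-thin-film} by replacing its top-order coefficient $u^n$ with $\Psi(u)$ and leaving all lower-order terms unchanged, so that $\partial_t u+\tilde{P}(u)u=f$ is equivalent to \eqref{S4: thin-film-eq-general}. Combining the estimate just obtained with the lower-order coefficient estimates already established for \eqref{S4: thin-film-eq} and Propositions~\ref{S2: pointwise multiplication}, \ref{S2: nabla} and \ref{S3: diff-op}, one gets $\tilde{P}\in C^\omega(U^{2+s}_\vartheta,\L(E_1,E_0))$. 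Its principal symbol is $\hat{\sigma}\tilde{P}(u)(x,\xi)=\Psi(u)(x)|\xi|^4$; since $\delta\ge 0$ and $u>0$ on $U^{2+s}_\vartheta$ we have $\Psi(u)\ge u^n$, whence
$$\hat{\sigma}\tilde{P}(u)(x,\xi)\ \ge\ u^n(x)\,|\xi|^4\ =\ {\sf d}^{4\beta}(x)\,({\sf d}^\vartheta u)^n(x)\,|\xi|^4\ \ge\ (\inf{\sf d}^\vartheta u)^n\,{\sf d}^{4\beta}(x)\,|\xi|^4 ,$$
so $\tilde{P}(u)$ is normally $\rho$-elliptic with $\rho={\sf d}^\beta$. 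By Theorem~\ref{S3: MR-domain} this yields $\tilde{P}(u)\in\cM_\gamma(E_1,E_0)$ for every $u\in U^{2+s}_\vartheta$, and \cite[Theorem~4.1]{CleSim01} then produces the unique local solution $\hat{u}$ in the asserted regularity class on some interval $J(u_0)=[0,T(u_0))$.

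Finally, for the smoothness $\hat{u}\in C^\infty(\dot{J}(u_0)\times\Omega)$ I would argue exactly as in the proof of Theorem~\ref{S4.1: Thm-porous}: passing to $v:={\sf d}^{\beta\vartheta}\hat{u}$ recasts the equation into one whose leading coefficient is bounded away from $0$ on each compact subinterval of $\dot{J}(u_0)$, and the parameter-dependent diffeomorphism and bootstrapping technique of \cite{Shao13} — now invoking Theorem~\ref{S3: MR-domain} in place of Theorem~\ref{S3: MR} — upgrades $v$, and hence $\hat{u}$, to $C^\infty(\dot{J}(u_0)\times\Omega)$. I do not expect a genuine obstacle here: the correction term $\delta u^3$ is a strictly lower-weight perturbation of $u^n$ that spoils neither the quasilinear structure nor the normal $\rho$-ellipticity, and the only mildly technical step is the weight embedding $bc^{2+s,3\vartheta}_\beta(\Omega)\hookrightarrow bc^{2+s,n\vartheta}_\beta(\Omega)$ used above, which is immediate from $n\le 3$ together with the boundedness of ${\sf d}^\beta$ on $\Omega$.
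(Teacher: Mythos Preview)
Your approach is exactly the paper's: observe that $u^3\in bc^{2+s,3\vartheta}_\beta(\Omega)\hookrightarrow bc^{2+s,n\vartheta}_\beta(\Omega)$ (using $n\le 3$, $\vartheta<0$, and boundedness of ${\sf d}^\beta$), so the extra term $\delta u^3$ is a harmless perturbation and the computations for \eqref{S4: thin-film-eq} carry over verbatim.

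One small slip: your $\tilde P(u)$ is not quite the right operator. Expanding $\div(\Psi(u)D\Delta u)$ produces $\Psi(u)\Delta^2 u+\Psi'(u)(Du\,|\,D\Delta u)_{g_m}$, so the third-order coefficient picks up an additional $3\delta u^2$ term; merely replacing the top-order coefficient $u^n$ by $\Psi(u)$ while ``leaving all lower-order terms unchanged'' does \emph{not} reproduce \eqref{S4: thin-film-eq-general}. This is harmless for the argument --- by the same weight reasoning, $u^2\in bc^{2+s,2\vartheta}_\beta(\Omega)\hookrightarrow bc^{2+s,(n-1)\vartheta}_\beta(\Omega)$, so the modified third-order coefficient enjoys the same mapping properties as $u^{n-1}$ --- but you should correct the definition of $\tilde P(u)$ accordingly.
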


\begin{remark}
We may observe that the solution $\hat{u}$ obtained in Theorem~\ref{S4.4: Thm-thin-film} is actually a solution to the following initial value problem with conditions on the free boundary $\partial[\supp(u)]$. Indeed, assume that $\supp(u_0)=\bar{\Omega}$ and $\Omega$ is a $C^6$-domain with compact boundary. Let $\Omega(t):=\supp[u(t,\cdot)]$. If the initial data $u_0$ satisfies the conditions in Theorem~\ref{S4.4: Thm-thin-film}, then 
\begin{equation}
\label{S4.4: thin-film-IBVP}
\left\{\begin{aligned}
\partial_t u +\div (u^n D \Delta u +\alpha_1 u^{n-1}\Delta u D u +\alpha_2 u^{n-2} |D u|^2 D u)&=f   &&\text{on}&&\Omega(t);  \\
u&= 0 &&\text{on}&&\partial\Omega(t);\\
u^n \frac{\partial\Delta u}{\partial\nu}&= 0 &&\text{on}&&\partial\Omega(t);\\
u(0)&=u_0  &&\text{on }&&\Omega &&
\end{aligned}\right.
\end{equation}
has at least one classical solution. 
The third condition reflects conservation of mass. This is a generalization of the problem studied in \cite{DalGiaGrun01, Grun02}.
The existence of a solution can be observed from the fact that the solution $\hat{u}$ to the first and fourth lines satisfies
$$ \hat{u}(t,\cdot) \in bc^{4+s,\vartheta}_\beta(\Omega)\cap U^{2+s}_\vartheta, \quad t\in \dot{J}. $$
Hence, for $t\in \dot{J}$ there are two continuous positive functions $c(t)<C(t)$ such that
\begin{equation}
\label{S4.4: lower bd of u}
c(t) \leq {\sf d}^{\beta\vartheta}(x) \hat{u}(t,x) \leq C(t) ,\quad  x\in\Omega, 
\end{equation}
and 
$${\sf d}^{(\vartheta-1)\beta}(x)\hat{u}^n(t,x) |D\Delta \hat{u} (t,x)|_{g_m} \leq C(t), \quad x\in\Omega.$$
The second inequality follows from \eqref{S4: power of u}, \eqref{S4: grad} and the fact that
$$\Delta \in \L(bc^{4+s,\vartheta}_\beta(\Omega), bc^{2+s,2 +\vartheta}_\beta(\Omega)). $$
The above two inequalities imply that for every $t\in\dot{J}$, as $x\to \partial\Omega$
$$|\hat{u}(t,x)|\leq C(t){\sf d}^{-\beta\vartheta}(x)\to 0, \quad  \hat{u}^n (t,x)|D\Delta \hat{u} (t,x)|_{g_m}  \leq C(t){\sf d}^{(1-\vartheta)\beta}(x)\to 0.$$
The fact that $\hat{u}(t,\cdot)>0$ on $\Omega$ is a consequence of \eqref{S4.4: lower bd of u}.
Therefore, 
\begin{equation}
\label{S4.4: supp u}
\supp[\hat{u}(t,\cdot)]=\Omega(t)=\Omega, \quad t\in J,
\end{equation}
and $\hat{u}$ is indeed a solution to equation~\eqref{S4.4: thin-film-IBVP}. If we seek solutions in the class
$$C^1_{1/2}(J(u_0),bc^{s,\vartheta}_\beta(\Omega))\cap C_{1/2}(J(u_0),bc^{4+s,\vartheta}_\beta(\Omega)) , $$
then $\hat{u}$ is actually the unique solution.
Note that the solution to equation~\eqref{S4.4: thin-film-IBVP} is, in general, not unique unless a third condition is prescribed on the free boundary $\partial[\supp(u)]$. A conventional supplementary condition is to set the contact angle to be zero.

By identifying $\hat{u},f,u_0\equiv 0$ on $\R^m\setminus \Omega$, $\hat{u}$ is nothing but a weak solution to the Cauchy problem
\begin{equation*}
\left\{\begin{aligned}
\partial_t u +\div (u^n D \Delta u +\alpha_1 u^{n-1}\Delta u D u +\alpha_2 u^{n-2} |D u|^2 D u)&=f    &&\text{on}&&\R^m_T;  \\
u(0)&=u_0   &&\text{on }&&\R^m&&
\end{aligned}\right.
\end{equation*}
belonging to the class $C_{1/2}(J; W^2_1(\R^m))$ for $\beta\in [1,n/(2n-4)]$ when $n\in (2,3]$, or for all $\beta\geq 1$ while $n\in (0,2]$ in the sense that
\begin{align*}
\int\limits_J\int\limits_{\R^m} \{ & u \partial_t\phi- \Delta u\div(u^n D \phi) + \alpha_1 u^{n-1} \Delta u (D\phi | D u)_{g_m}\\
& +\alpha_2 u^{n-2} |Du|^2 (D\phi |D u)_{g_m}   \}\, dx\, dt = -\int\limits_J\int\limits_{\R^m} f\phi\, dx\, dt 
\end{align*}
for all $\phi\in C_0(\mathring{J}; W^2_\infty(\R^m))$. To prove this statement, one first observes that, by the {\em uniform exterior and interior ball condition}, for some sufficiently small $a>0$ there is some $a$-tubular neighborhood of $\partial\Omega$, denoted by ${\sf T}_a$, such that ${\sf T}_a$ can be parameterized by 
$$
\Lambda: (-a,a)\times \partial\Omega \rightarrow {\sf T}_a: \quad (r,\p)\mapsto \p+ r \nu_\p,
$$
where $\nu_\p$ is the inward pointing unit normal of $\partial\Omega$ at $\p$. 
By the implicit function theorem, there exists some $C^5$-function $\Theta$ such that 
$$\Lambda^{-1}: {\sf T}_a \to  (-a,a)\times \partial\Omega,\quad \Lambda^{-1}(x)= ({\sf d}(x), \Theta(x)),$$
where ${\sf d}$ is defined in \eqref{S3.2: rescaled dist}, and $\Theta(x)$ is the closest point on $\partial\Omega$ to $x$.


To verify that $\hat{u}\in C_{1/2}(J; W^2_1(\R^m))$, it suffices to check the integrability of $\hat{u}$ near $\partial\Omega$. Since $u\in C_{1/2}(J, bc^{4+s,\vartheta}_\beta(\Omega))$, there exists a positive function $P\in C_{1/2}(J)$ such that
$$ {\sf d}^{(2 +\vartheta)\beta} (x)| \nabla^2 \hat{u}(t,x) |\leq P(t),\quad x\in \Omega, \quad t\in \dot{J}. $$
Then 
\begin{align*}
\int\limits_{{\sf T}_a} |\nabla^2 \hat{u}(t,x)|\, dx &\leq P(t) \int\limits_{{\sf T}_a \cap \Omega} {\sf d}^{-(2+\vartheta)\beta}(x)\, dx\\
&\leq M P(t) \int\limits_0^a \int\limits_{\partial\Omega}  r^{-(2+\vartheta)\beta} \, d\mu\, dr,
\end{align*}
which is finite iff $n\in (0,2]$, or $\beta\in [1,n/(2n-4)]$ and $n\in (2,3]$. The last line follows from the compactness of 
$\partial\Omega$ and \cite[formula~(25)]{PruSim13}. The argument for lower order derivatives of $\hat{u}$ is similar.

What is more, \eqref{S4.4: supp u} states that the support of $\hat{u}$ has the global small term waiting-time property for  all dimensions, that is, there exists some $T^*>0$ such that
\begin{equation}
\label{S4.4: waiting time}
\supp[u(t,\cdot)] =\supp[u(0,\cdot)], \quad t\in (0,T^*). 
\end{equation}
To the best of the author's knowledge, this is the first known result for the generalized thin film equation~\eqref{S4: thin-film-eq}. 
This result also supplements those in \cite{DalGiaGrun01, Grun02, Shish07} for the case dimension $m\geq 4$ with $n\in (0,3]$ and to domains without the {\em external cone property} with $n\in [2,3]$. For any $y\in \partial\Omega$, $\Omega$ is said to satisfy {\em  the external cone property} at $y$ if for some $\theta\in (0,\pi/4)$ there is an infinite cone $\mathcal{C}(y,\theta)$ with vertex $y$ and opening angle $\theta$ such that 
$$\supp[u_0]\cap \mathcal{C}(y,\theta)=\emptyset.$$ 
See \cite[Theorem~4.1]{Grun02} for more details. A domain $\Omega$ is said to enjoy {\em  the external cone property} if it satisfies this property at every $y\in \partial\Omega$. Note that any $u_0\in U^{2+s}_\vartheta$ fulfils the flatness condition of the initial data in \cite[Theorem~4.1]{Grun02}.
\end{remark}


\section*{Acknowledgements}
The authors would like to express his sincere gratitude to Prof. Herbert Amann for valuable suggestions on applications of the theory in this paper. I would also like to thank my thesis advisor, Gieri Simonett, for many helpful discussions.

\end{document}